\newtheorem{theorem}{Theorem}[section]
\newtheorem{lemma}[theorem]{Lemma}
\newtheorem{proposition}[theorem]{Proposition}
\newtheorem{remark}[theorem]{Remark}
\newtheorem{corollary}[theorem]{Corollary}
\newcommand{\R}{\mathbb{R}}
\newcommand{\N}{\mathbb{N}}
\def\bbm[#1]{\mbox{\boldmath $#1$}}
\newcommand{\beq }{\begin{equation}}
\newcommand{\eeq }{\end{equation}}
\def\sideremark#1{\ifvmode\leavevmode\fi\vadjust{\vbox to0pt{\vss% the remark3
 \hbox to 0pt{\hskip\hsize\hskip1em%                          will appear only
 \vbox{\hsize3cm\tiny\raggedright\pretolerance10000%          on the side
  \noindent #1\hfill}\hss}\vbox to8pt{\vfil}\vss}}}%
\pgfplotsset{compat=1.18}
\begin{document}

\title[Second-order boundary estimates]{Second-order boundary estimates for solutions to a class of quasilinear elliptic equations}

\author{Giuseppe Spadaro and Domenico Vuono}

\email[Giuseppe Spadaro]{giuseppe.spadaro@unical.it}
\email[Domenico Vuono]{domenico.vuono@unical.it}
\address[ G. Spadaro, D. Vuono]{Dipartimento di Matematica e Informatica, Università della Calabria,
Ponte Pietro Bucci 31B, 87036 Arcavacata di Rende, Cosenza, Italy}

\keywords{Quasilinear elliptic equations, Orlicz spaces, Second-order derivatives, Regularity}

\subjclass[2020]{35B65, 35J25, 35J62}

%\maketitle

\begin{abstract}
We prove global second-order regularity for a class of quasilinear elliptic equations, both with homogeneous Dirichlet and Neumann boundary conditions. A condition on the integrability of the second fundamental form on the boundary of the domain is required. As a consequence, with the additional assumption that the source term has a sign, we obtain integrability properties of the inverse of the gradient of the solution. 
Assuming convexity of the domain, no boundary regularity is required.
\end{abstract}

\maketitle

\section{Introduction}

This work focuses on establishing global second-order estimates for solutions to boundary value problems, either of Dirichlet type, namely:
\begin{equation}\label{Dir_pb}
\begin{cases}
-\operatorname{div} (a(|\nabla u|) \nabla u )= f(x) &\text{in } \Omega\\
u = 0 &\text{on } \partial \Omega,
\end{cases}
\end{equation}
or of Neumann type:
\begin{equation}\label{Neu_pb}
\begin{cases}
-\operatorname{div} (a(|\nabla u|) \nabla u )= f(x) &\text{in } \Omega\\
\frac{\partial u}{\partial \nu} = 0 &\text{on } \partial \Omega,
\end{cases}
\end{equation}
with the compatibility condition:
\begin{equation*}
    \int_{\Omega} f \ dx = 0.
\end{equation*}
Here $\Omega$ is a bounded open set in $\mathbb{R}^n$ and $f\in W^{1,1}(\Omega) \cap L(\Omega)$, where:
\begin{equation}\label{L_Omega}
    L(\Omega) = \begin{cases}
        L^{n,1}(\Omega) & \text{if }\  n \geq 3,\\
        L^q(\Omega) \text{ with } q > n & \text{if }\  n = 2.    \end{cases}
\end{equation}
Moreover, $\nu$ denotes the outward unit normal on $\partial \Omega$ and the function $a:(0,\infty)\rightarrow (0,\infty)$ is of class $C^1(0,\infty)$, and such that:
\begin{equation}\label{cond_a}
-1 < i_a : = \inf_{t>0} \frac{ta'(t)}{a(t)} \leq \sup_{t>0} \frac{ta'(t)}{a(t)} = : s_a < \infty.
\end{equation}
In particular, choosing $a(t) = t^{p-2}$ with $p > 1$ we recover the classical $p$-Laplace operator, which fits into this framework since $i_a=s_a=p-2$ in this case.\\
Notice that \eqref{Dir_pb} and \eqref{Neu_pb} correspond to the Euler–Lagrange equations derived from the strictly convex energy functional:
\begin{equation*}
    J(u):=\int_{\Omega} B(|\nabla u|) - fu \, dx,
\end{equation*}
in the proper Orlicz-Sobolev spaces, where the function $B:[0,\infty)\rightarrow[0,\infty)$ is given by:
\begin{equation}\label{def_B}
    B(t) = \int_0^t b(\tau) \ d\tau \quad, t\geq 0,
\end{equation}
and $b:[0,\infty) \rightarrow [0,\infty)$ is defined as:
\begin{equation}\label{b}
    b(t) = a(t)t.
\end{equation}
Since the function $b$, by \eqref{cond_a}, is strictly increasing for $t \geq 0$, then the function $B$ is strictly convex.

The central aim of this work is to prove optimal regularity of the stress field $a(|\nabla u|)\nabla u$ up to the boundary. This problem has been studied by Cianchi and Maz’ya in \cite{CiaMa}, where the authors proved, under minimal assumptions on the domain $\Omega$, that $a(|\nabla u|)\nabla u \in W^{1,2}(\Omega)$ if and only if $f\in L^2(\Omega)$, under either Dirichlet or Neumann boundary conditions.

Our focus is on establishing stronger second-order regularity of the solutions, which requires more stringent regularity assumptions on the source term and on the domain.

Here, we require a condition on the summability of the second fundamental form on $\partial \Omega$. Namely, we assume that $\Omega$ is a bounded Lipschitz domain and that the functions locally describing the boundary of $\Omega$ possess second-order weak derivatives belonging to a specific Lorentz–Zygmund space, depending on the dimension $n$.
\begin{theorem}\label{teo1INTRO}
	Let $\Omega$ be a bounded Lipschitz domain in $\R^n$, with $\partial \Omega\in W^2X$, where

\begin{equation}\label{X}
    X=
    \begin{cases}
        L^{n-1,1} & \mbox{if $n \geq 3$,}\\
        L\log{L} & \mbox{if $n =2$.}
    \end{cases}
\end{equation}
Assume that the function $a \in C^1(0,\infty)$ satisfies \eqref{cond_a}. Let ${u}$ be a weak solution to either the Neumann problem \eqref{Neu_pb} or the Dirichlet problem \eqref{Dir_pb}, with $$ f(x)\in W^{1,1}(\Omega)\cap L(\Omega).$$
Then,
\begin{equation}\label{ciao2INTRO}
    a(|\nabla u|)^k\ \nabla u \in W^{1,2}(\Omega),
\end{equation}
with
\begin{equation}\label{T_k}
    \begin{cases}
        k > \frac{s_a- 1}{2s_a}, &\text{if }\  0 \leq i_a < s_a,\\
        k < \frac{i_a-1}{2i_a},&\text{if }\  i_a < s_a\leq0,\\
        \frac{s_a-1}{2s_a} < k < \frac{i_a-1}{2i_a},&\text{if }\  i_a < 0 < s_a.\\
    \end{cases}
\end{equation}
\end{theorem}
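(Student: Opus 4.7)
The plan is to extend the Reilly-type integration-by-parts identity of Cianchi--Maz'ya \cite{CiaMa}, originally developed for the stress field $V=a(|\nabla u|)\nabla u$, to the modified field $\widetilde V:=a(|\nabla u|)^k\nabla u$. First I would regularize the problem by replacing $a$ with a uniformly elliptic approximation, smoothing $f$, and (if necessary) exhausting $\Omega$ from inside by smooth domains, so that the corresponding solutions are $C^{2,\alpha}$ up to the boundary and all pointwise identities are rigorously justified. The theorem will then follow from estimates uniform in the regularization parameters together with a standard weak-compactness argument.

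For a smooth solution, setting $\alpha:=|\nabla u|a'(|\nabla u|)/a(|\nabla u|)\in[i_a,s_a]$, $N:=\nabla u/|\nabla u|$, $w_j:=N_iu_{ij}$, and writing $a$ for $a(|\nabla u|)$, a direct computation using $-\operatorname{div}(a\nabla u)=f$ yields
\begin{equation*}
|\nabla\widetilde V|^2=a^{2k}\bigl(|D^2u|^2+(k^2\alpha^2+2k\alpha)|w|^2\bigr),
\end{equation*}
\begin{equation*}
\widetilde V_{i,j}\widetilde V_{j,i}=a^{2k}\bigl(|D^2u|^2+2k\alpha|w|^2+k^2\alpha^2(w\cdot N)^2\bigr),\qquad \operatorname{div}\widetilde V=-a^{k-1}f+(k-1)\,a^{k-1}a'|\nabla u|\,(w\cdot N).
\end{equation*}
The tensorial integration-by-parts identity
\begin{equation*}
\int_\Omega \widetilde V_{i,j}\widetilde V_{j,i}\,dx=\int_\Omega(\operatorname{div}\widetilde V)^2\,dx+\int_{\partial\Omega}\bigl[\widetilde V_i\nu_j\widetilde V_{j,i}-(\widetilde V\cdot\nu)\operatorname{div}\widetilde V\bigr]d\mathcal{H}^{n-1}
\end{equation*}
then reduces, after tangential calculus on $\partial\Omega$, to a term of the form $\int_{\partial\Omega}\mathcal{B}(\widetilde V_\tau,\widetilde V_\tau)\,d\mathcal{H}^{n-1}$ involving the second fundamental form $\mathcal{B}$, modulo contributions killed by the boundary condition. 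In the Dirichlet case, $\nabla u = u_\nu\nu$ on $\partial\Omega$ makes $\widetilde V$ purely normal and the boundary contribution vanishes identically; in the Neumann case $\widetilde V$ is tangential and the $\mathcal{B}$-term survives.

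The Neumann boundary integral is handled via the Lorentz--H\"older inequality combined with the sharp trace embedding $W^{1,2}(\Omega)\hookrightarrow L^{2(n-1)/(n-2),2}(\partial\Omega)$ for $n\geq 3$ (with the Zygmund-type endpoint statement for $n=2$); the hypothesis $\partial\Omega\in W^2X$ provides exactly the integrability of $|\mathcal{B}|$ required, and absolute continuity of the Lorentz/Zygmund norm lets the resulting boundary term be absorbed into the interior energy with an arbitrarily small constant. On the divergence side, an $\varepsilon$-Young inequality splits $(\operatorname{div}\widetilde V)^2$ into a principal part $(1+\varepsilon)a^{2k-2}f^2$ plus a correction proportional to $(k-1)^2\alpha^2 a^{2k}(w\cdot N)^2$; since $k^2\alpha^2 a^{2k}(w\cdot N)^2$ already appears inside $\widetilde V_{i,j}\widetilde V_{j,i}$ on the left of the identity, absorption becomes possible provided the algebraic balance between $(k-1)^2$ and $k^2$, weighted against the sign contribution coming from $2k\alpha|w|^2$, is favorable.

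This balancing step is the main obstacle of the proof, and it is where the three subcases of \eqref{T_k} arise, reflecting the three possible sign configurations of $i_a$ and $s_a$ and determining whether the cross term $2k\alpha|w|^2$ helps or hinders the absorption. A uniform analysis over $\alpha\in[i_a,s_a]$ should show that the range \eqref{T_k} is precisely the one in which the combined effects close the estimate and yield a bound of the form
\begin{equation*}
\int_\Omega|\nabla\widetilde V|^2\,dx\leq C\Bigl(\int_\Omega a^{2k-2}|f|^2\,dx+\|\widetilde V\|_{L^2(\Omega)}^2\Bigr).
\end{equation*}
The integrability assumption $f\in W^{1,1}(\Omega)\cap L(\Omega)$, together with standard a priori gradient bounds driven by the $L^{n,1}$-type regularity of $f$, then renders the right-hand side finite; passing to the limit in the regularization and invoking lower semicontinuity of the $W^{1,2}$-norm concludes \eqref{ciao2INTRO}.
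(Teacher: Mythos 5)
Your overall architecture (regularize $a$, smooth $f$, approximate $\Omega$, prove a uniform Hessian-type bound, pass to the limit by weak compactness) matches the paper's, but two steps in the core estimate are genuinely wrong or missing. First, the claim that in the Dirichlet case the boundary contribution ``vanishes identically'' because $\widetilde V$ is purely normal is false. For a normal field the Reilly-type boundary integrand does not vanish; it reduces (via the identity $\Delta u\,\partial_\nu u-\sum_{i,j}u_{ij}u_i\nu_j=-\operatorname{tr}\mathcal B\,(\partial_\nu u)^2$ when $\nabla_T u=0$) to a term of the form $\int_{\partial\Omega}\operatorname{tr}\mathcal B\,|\widetilde V|^2\,d\mathcal H^{n-1}$, which survives and must be controlled exactly as in the Neumann case using $\partial\Omega\in W^2X$ together with a capacitary/trace absorption argument (this is the term $J_2$ in the paper's Theorem \ref{STIMA}; it can only be discarded when $\Omega$ is convex, where it has a sign). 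If your claim were true, the Dirichlet half of the theorem would need no boundary regularity at all, which contradicts both the hypotheses and the known counterexamples.

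Second, the quantitative heart of the theorem is left open and, as set up, the divergence term cannot be closed under the stated hypotheses. Your final bound requires $\int_\Omega a^{2k-2}|f|^2\,dx<\infty$; for $k$ in the range \eqref{T_k} one typically has $2k-2<0$ (e.g.\ $a(t)=t^{p-2}$, $p>2$, $k$ near $\frac{p-3}{2(p-2)}$ gives $a^{2k-2}\approx|\nabla u|^{-(p+1)}$), so this weight blows up on the critical set and the integral is in general infinite for $f\in W^{1,1}(\Omega)\cap L(\Omega)$. The paper avoids this entirely: it does not apply the div--curl identity to $a^k\nabla u$, but tests the differentiated equation against $u_i\varphi^2(\varepsilon+|\nabla u|^2)^{-\alpha/2}$ with $\alpha\in[0,1)$, integrates the resulting $f$-term by parts so that only $\int_\Omega|\nabla f|$ and a trace of $|f|$ appear (multiplied by $\|\nabla u\|_\infty^{1-\alpha}$, which is finite precisely because $\alpha<1$), obtains the weighted estimate $\int_\Omega a_\varepsilon(\varepsilon+|\nabla u_\varepsilon|^2)^{-\alpha/2}|D^2u_\varepsilon|^2\le C$, and only afterwards converts it into $\int_\Omega a_\varepsilon^{2k}|D^2u_\varepsilon|^2\le C$ using the two-sided bounds $a(1)\min\{(\varepsilon+t^2)^{i_a/2},(\varepsilon+t^2)^{s_a/2}\}\le a_\varepsilon(t)\le a(1)\max\{\cdot\}$ together with the global $L^\infty$ gradient bound; the three cases of \eqref{T_k} come from this conversion, with the strict inequalities generated by $\alpha<1$. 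Your ``uniform analysis over $\alpha\in[i_a,s_a]$ should show that the range \eqref{T_k} is precisely the one that closes'' is exactly the step that needs a proof, and the pointwise coercivity of $\widetilde V_{i,j}\widetilde V_{j,i}$ minus the absorbed $(\operatorname{div}\widetilde V)^2$ contribution over all admissible Hessians and directions is not established (the paper's analogue is the matrix inequality of Lemma \ref{stima_punt} combined with the case analysis on the sign of $\theta_\varepsilon$). Without these two repairs the proposal does not prove the theorem.
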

\vspace{0.1cm}
\begin{remark}
    The established regularity is optimal, as counterexamples show that stronger regularity results cannot be expected. Indeed, in \cite[Remark $1.4$]{SMM_vec} the authors show that, in the case of the $p$-Laplacian (i.e., $a(t) = t^{p-2}$), the solution does not exhibit more regularity than what we have proved in Theorem \ref{teo1INTRO}, even assuming a smooth right-hand side.\\
    However, we cannot claim that the result is optimal with respect to the regularity assumed on the source term.
\end{remark}
\begin{remark}
    Notice that with $k = 1$ we recover the result in \cite{CiaMa}, namely $a(|\nabla u|)\nabla u \in W^{1,2}(\Omega)$.
\end{remark}
\vspace{0.1cm}
As a corollary of Theorem \ref{teo1INTRO}, we deduce the following Sobolev regularity for the second-order derivatives of solutions of both the Dirichlet and Neumann boundary value problems.
\begin{corollary}\label{CorW22}
Let $\Omega$, $a$, $f$ and $u$ as in Theorem \ref{teo1INTRO}. Suppose that $\inf_{t \in[0,M]} a(t) = 0$, for every $M>0$, then:
\begin{equation}\label{W22}
    u\in W^{2,2}(\Omega) \quad \text{if } \ s_a < 1.
\end{equation}
If instead, $\inf_{t \in [0,M]} a(t) > 0$, for every $M>0$, then \eqref{W22} holds regardless of $s_a$.
\end{corollary}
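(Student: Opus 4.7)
The plan is to deduce the corollary from Theorem \ref{teo1INTRO} by a judicious choice of the exponent $k$, followed—in the non-degenerate regime with $s_a \geq 1$—by a pointwise inversion of the relation between $\nabla(a(|\nabla u|)^k \nabla u)$ and $D^2 u$.

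The starting observation is that whenever $s_a < 1$, the choice $k = 0$ belongs to the admissible range in \eqref{T_k}, in each of the three cases there listed: when $0 \leq i_a < s_a < 1$ one has $(s_a-1)/(2s_a) < 0$; when $i_a < s_a \leq 0$ one has $(i_a-1)/(2i_a) > 0$; and when $i_a < 0 < s_a < 1$ the two bounds flank $0$. Applying Theorem \ref{teo1INTRO} with $k = 0$ therefore yields $\nabla u \in W^{1,2}(\Omega)$, that is $u \in W^{2,2}(\Omega)$. This settles the degenerate case (where the assumption $s_a < 1$ is in force) and also the non-degenerate case whenever $s_a < 1$, so only the non-degenerate case with $s_a \geq 1$ remains.

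In that remaining case, the admissible range in \eqref{T_k} forces $k > 0$; fix such a $k$ and set $T := a(|\nabla u|)^k \nabla u$, which lies in $W^{1,2}(\Omega)$ by Theorem \ref{teo1INTRO}. The gradient estimate for quasilinear equations with source $f \in L(\Omega)$ (as developed by Cianchi and Maz'ya) gives $\|\nabla u\|_{L^\infty(\Omega)} \leq M$ for some finite $M$. Writing $T = \Phi(\nabla u)$ with $\Phi(\xi) := a(|\xi|)^k \xi$, a direct computation yields
\begin{equation*}
    D\Phi(\xi) \;=\; a(|\xi|)^k \Bigl( I + k\,\tfrac{a'(|\xi|)|\xi|}{a(|\xi|)}\, \hat\xi \otimes \hat\xi \Bigr), \qquad \hat\xi = \xi/|\xi|,
\end{equation*}
so that $D\Phi(\xi)$ has eigenvalues $a(|\xi|)^k$ (with multiplicity $n-1$) and $a(|\xi|)^k\bigl(1 + k\,a'(|\xi|)|\xi|/a(|\xi|)\bigr)$. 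Because $k$ satisfies the bounds in \eqref{T_k} and $a'(|\xi|)|\xi|/a(|\xi|)\in[i_a,s_a]$ by \eqref{cond_a}, there exists $c_0>0$ such that both eigenvalues are bounded below by $c_0\,a(|\xi|)^k$, hence $\|D\Phi(\xi)^{-1}\| \leq c_0^{-1}\,a(|\xi|)^{-k}$ for every $\xi \neq 0$.

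On $\{\nabla u \neq 0\}$ the chain rule gives $DT = D\Phi(\nabla u)\,D^2 u$, whence
\begin{equation*}
    |D^2 u|^2 \;\leq\; c_0^{-2}\, a(|\nabla u|)^{-2k}\, |DT|^2 \quad \text{a.e.},
\end{equation*}
while on $\{\nabla u = 0\}$ Stampacchia's lemma yields $D^2 u = 0$ a.e., so the estimate extends throughout $\Omega$. The non-degeneracy hypothesis combined with $|\nabla u| \leq M$ gives $a(|\nabla u|) \geq c_1 > 0$ and, since $k > 0$, $a(|\nabla u|)^{-2k} \leq c_1^{-2k}$ a.e. Integrating, $\int_\Omega |D^2 u|^2 \leq C \int_\Omega |DT|^2 < \infty$, which proves $u \in W^{2,2}(\Omega)$. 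The main technical point is the uniform ellipticity of $D\Phi$, namely the positivity of $1 + k\,a'(|\xi|)|\xi|/a(|\xi|)$: this is precisely guaranteed by the admissible range for $k$ in Theorem \ref{teo1INTRO}. The degeneracy of the chain rule on $\{\nabla u = 0\}$ is handled by Stampacchia's lemma.
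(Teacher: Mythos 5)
Your proposal is correct, and its first half coincides with the paper's (one-line) proof: for $s_a<1$ the value $k=0$ lies in each admissible range of \eqref{T_k}, so Theorem \ref{teo1INTRO} directly gives $\nabla u\in W^{1,2}(\Omega)$. Where you genuinely diverge is in the non-degenerate case with $s_a\geq 1$: the paper again just invokes ``$k=0$'', which, read literally against \eqref{T_k}, is not admissible there (one needs $k>\frac{s_a-1}{2s_a}\geq 0$); the implicit point in the paper is that when $\inf_{[0,M]}a>0$ the weight $a_\varepsilon$ in the proof of Theorem \ref{teo1INTRO} is uniformly bounded below, so the restriction on $k$ disappears. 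You instead take the \emph{conclusion} of Theorem \ref{teo1INTRO} for an admissible $k>0$ and invert the map $\Phi(\xi)=a(|\xi|)^k\xi$ pointwise; your eigenvalue computation is right, and the positivity of $1+k\,\theta$ for $\theta\in[i_a,s_a]$ does follow from \eqref{T_k} together with $i_a>-1$ (e.g.\ in the case $i_a<0<s_a$ one checks $\frac{i_a-1}{2i_a}<-\frac{1}{i_a}$). This buys a proof that works as a genuine corollary of the stated theorem, at the price of one extra step. The only point to tighten is a mild circularity: writing $DT=D\Phi(\nabla u)\,D^2u$ and invoking Stampacchia on $\{\nabla u=0\}$ presupposes that $D^2u$ exists in $W^{1,1}_{\mathrm{loc}}$, which is what you are proving. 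This is easily repaired: since $|D\Phi(\xi)^{-1}|\leq c_0^{-1}c_1^{-k}$ on $\{|\xi|\leq M\}$ and the radial profile $t\mapsto a(t)^kt$ has derivative bounded below by a positive constant there, $\Phi$ is bi-Lipschitz from $\{|\xi|\leq M\}$ onto its image, so $\nabla u=\Phi^{-1}(T)$ is a Lipschitz function of the $W^{1,2}$ field $T$ and hence lies in $W^{1,2}(\Omega)$ with $|D^2u|\leq C|DT|$ a.e.; alternatively, run your pointwise argument on the smooth approximants $u_\varepsilon$ of the paper's Step~1 and pass to the limit.
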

\begin{remark}
    In \cite{CiaMa}, the authors proved that the solutions of \eqref{Dir_pb} and \eqref{Neu_pb} are actually in $W^{2,2}(\Omega)$, under the condition 
    \begin{equation*}
        \inf_{t \in [0,M]} a(t) > 0,
    \end{equation*}
    for every $M>0$, and assuming stronger regularity on the source term and the domain.
\end{remark}
Global second-order estimates in a basic form hold for the solutions to \eqref{Dir_pb} and \eqref{Neu_pb} in any bounded convex domain $\Omega$:
\begin{theorem}[Convex domains]\label{conv_d}
    Let $\Omega$ be a bounded convex open set in $\mathbb{R}^n$, with $n \geq 2$. Assume that the function $a \in C^1(0,\infty)$ obeys \eqref{cond_a}.
    Let ${u}$ be a weak solution to either the Neumann problem \eqref{Neu_pb} or the Dirichlet problem \eqref{Dir_pb}, with $$ f(x)\in W^{1,1}(\Omega)\cap L(\Omega).$$ Then,
    \begin{equation*}
        a(|\nabla u|)^k\ \nabla u \in W^{1,2}(\Omega),
    \end{equation*}
    with $k$ as in Theorem \ref{teo1INTRO}.\\
Moreover, either if $\inf_{t \in [0,M]} a(t) > 0$, for every $M>0$, or if $s_a < 1$, we have that $u\in W^{2,2}(\Omega)$.
\end{theorem}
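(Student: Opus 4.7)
The plan is to reduce Theorem \ref{conv_d} to Theorem \ref{teo1INTRO} by an approximation argument based on convex smoothing of the domain. I would first approximate the bounded convex open set $\Omega$ from the inside by a sequence $\{\Omega_m\}$ of smooth bounded convex open sets with $\Omega_m \nearrow \Omega$, for instance via Minkowski regularization or by smoothing a concave defining function. Each $\Omega_m$ has $C^\infty$ boundary and is therefore a fortiori admissible for Theorem \ref{teo1INTRO}, since $\partial\Omega_m\in W^2 X$ trivially. Let $f_m$ be a suitable approximation of $f$ in $W^{1,1}(\Omega)\cap L(\Omega)$, adjusted in the Neumann case by a small additive constant so that the compatibility condition $\int_{\Omega_m} f_m\,dx = 0$ is preserved, and let $u_m$ denote the unique weak solution of the corresponding Dirichlet or Neumann problem on $\Omega_m$.

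The key observation is that, revisiting the estimates behind Theorem \ref{teo1INTRO}, the $W^{1,2}$ norm of $a(|\nabla u_m|)^k\nabla u_m$ is controlled after an integration by parts by a volume term depending on $f_m$ and by a boundary integral along $\partial\Omega_m$ involving the second fundamental form; the latter term is exactly what is absorbed, in the general setting, by the $W^2 X$ norm of the boundary. Since $\Omega_m$ is convex, the second fundamental form is pointwise non-negative, so that the boundary contribution has the favourable sign and can simply be discarded. This produces the estimate
\begin{equation*}
\|a(|\nabla u_m|)^k\nabla u_m\|_{W^{1,2}(\Omega_m)} \le C\bigl(\|f_m\|_{W^{1,1}(\Omega_m)}+\|f_m\|_{L(\Omega_m)}\bigr),
\end{equation*}
with a constant $C$ independent of $m$, and in particular independent of any regularity measure of $\partial\Omega$.

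The last step is the passage to the limit $m\to\infty$. Using the previous estimate together with standard a priori bounds on $u_m$ in the relevant Orlicz--Sobolev space, one extracts a subsequence such that $u_m\to u$ locally in $\Omega$ in a topology strong enough to handle the nonlinear quantity $a(|\nabla u|)$, and $a(|\nabla u_m|)^k\nabla u_m\weakto a(|\nabla u|)^k\nabla u$ weakly in $W^{1,2}$ on every open set compactly contained in $\Omega$; the limit $u$ is identified with the unique weak solution on $\Omega$ by strict convexity of $J$. Weak lower semicontinuity then transfers the uniform bound to $u$ and yields \eqref{ciao2INTRO} on the whole of $\Omega$. The $W^{2,2}$ statement is then deduced exactly as in Corollary \ref{CorW22}. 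I expect the main technical obstacle to lie precisely in this limit step: one must carefully design the approximations of $\Omega$ and $f$ so that compatibility in the Neumann case, the convergence of the datum in $W^{1,1}\cap L$, and a mode of convergence of $u_m$ compatible with the nonlinearity are all simultaneously preserved as the boundaries vary.
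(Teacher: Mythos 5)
Your proposal is correct and follows essentially the same route as the paper: reduce to the smooth-domain machinery of Theorem \ref{teo1INTRO} on smooth convex approximating domains, observe that convexity gives the boundary curvature terms in Theorem \ref{STIMA} the favourable sign (in the paper's convention, $\operatorname{tr}\mathcal{B}\leq 0$ and $\mathcal{B}(\nabla_T u,\nabla_T u)\leq 0$ on $\partial\Omega$), so that the constant is independent of $K_\Omega$, and then pass to the limit. The only implementation difference is that the paper approximates $\Omega$ from the \emph{outside} in the Hausdorff distance, so that $\Omega\subset\Omega_m$ and the uniform $W^{1,2}(\Omega_m)$ bound restricts directly to the fixed domain $\Omega$, whereas you approximate from the inside and therefore need the slightly more delicate exhaustion and lower-semicontinuity argument you sketch; both routes work.
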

The fact that this result is valid without any regularity assumptions on the domain $\Omega$ is a consequence of the semidefinite nature of the second fundamental form on the boundary of a convex set.
We emphasize that Theorem \ref{teo1INTRO} and Theorem \ref{conv_d} hold without any sign assumption on the source term $f$. However, if a sign condition on $f$ is additionally imposed, we can further derive integrability properties of the inverse of the gradient of the solution, namely:
\begin{theorem}\label{peso_stima_Intro}
     Let $u$, $a$, $\Omega$ and $f$ as in Theorem \ref{teo1INTRO}. Let $x\in \overline \Omega$. Suppose that 
     \begin{equation*}
         f \geq \tau_x > 0\quad \text{or } f \leq -\tau_x < 0 \quad \text{in } B_\rho(x)\cap \overline\Omega,
     \end{equation*}
  for some $\rho$ positive constant. 
Let $\beta\in \R$, such that 

\begin{equation}\label{casi_di_beta}
    \beta\in 
    \begin{cases}
        \left(-\infty,\frac{s_a+1}{s_a}\right) & \mbox{if $0\leq i_a<s_a$,}\vspace{0.4cm}\\ \vspace{0.4cm}
        \left(\frac{i_a+1}{i_a},\frac{s_a+1}{s_a}\right) & \mbox{if $i_a< 0<s_a$,}\\ 
        \left(\frac{i_a+1}{i_a},+\infty\right) & \mbox{if $i_a<s_a \leq 0$.}
    \end{cases}
\end{equation}

Then we have:
     \begin{equation*}
         \int_{\Omega\cap B_\rho(x)} \frac{1}{{(a(|\nabla  {u}|}))^\beta}\leq C,
     \end{equation*}
where $C=C(i_a,s_a,\beta,\tau,\rho,n,\Omega,\|{f}\|_{L(\Omega)},\|{f}\|_{W^{1,1}(\Omega)})$ is a positive constant. \\
     \noindent As a consequence, if $\inf_{t\in[0,M]} a(t)=0$, for every $M>0$, and $s_a\geq 1$, then $$u\in W^{2,q}(\Omega),\quad  \text{with } 1\leq q< \frac{s_a+1}{s_a}.$$
\end{theorem}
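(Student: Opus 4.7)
The idea is to convert the sign condition $|f|\geq\tau$ into a pointwise control on $a(|\nabla u|)$ in terms of $|\nabla v|$, where $v:=a(|\nabla u|)^k\nabla u\in W^{1,2}(\Omega)$ for any admissible $k$ from Theorem~\ref{teo1INTRO}. Once such a bound is in place, H\"older's inequality together with $\nabla v\in L^2(\Omega)$ yields the weighted estimate.

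The key algebraic identities are
\begin{equation*}
\operatorname{div}\bigl(a(|\nabla u|)\nabla u\bigr)= a(|\nabla u|)^{1-k}\operatorname{div} v + (1-k)\, a'(|\nabla u|)\, \nabla|\nabla u|\cdot \nabla u,
\end{equation*}
together with the pointwise expansion
\begin{equation*}
|\nabla v|^2 = a(|\nabla u|)^{2k}\bigl(|D^2 u|^2 + k\theta(k\theta+2)\,|\nabla|\nabla u||^2\bigr),\quad \theta:=\frac{a'(|\nabla u|)|\nabla u|}{a(|\nabla u|)}\in[i_a,s_a].
\end{equation*}
Combined with $|a'(t)t|\leq \max(|i_a|,s_a)\,a(t)$ and the resulting pointwise comparison $a(|\nabla u|)^k|D^2 u|\lesssim|\nabla v|$, the PDE yields
\begin{equation*}
|f(x)|\leq C(i_a,s_a,k)\, a(|\nabla u(x)|)^{1-k}\, |\nabla v(x)|\quad\text{a.e.~in }\Omega.
\end{equation*}
On $B_\rho(x)\cap\overline\Omega$ the hypothesis $|f|\geq\tau$ then translates into $a(|\nabla u|)^{1-k}\geq \tau/(C|\nabla v|)$.

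Choosing $k$ near the lower endpoint of \eqref{T_k} (so that $1-k>0$) and raising the previous inequality to the power $\beta/(1-k)\in(0,2]$ produces the upper bound $\beta<(s_a+1)/s_a$ after integrating and applying H\"older with $\nabla v\in L^2$; choosing $k$ near the upper endpoint of \eqref{T_k} (so that $1-k<0$) and raising to a power of the opposite sign produces the symmetric lower bound $\beta>(i_a+1)/i_a$. Combining the two choices recovers exactly the range \eqref{casi_di_beta}. For the $W^{2,q}$ consequence, use $|D^2 u|^q\lesssim a(|\nabla u|)^{-kq}|\nabla v|^q$ and apply H\"older with exponents $2/q$ and $2/(2-q)$: the first factor is controlled by Theorem~\ref{teo1INTRO} and the second by the weighted estimate provided $2kq/(2-q)<(s_a+1)/s_a$, which for $k$ approaching $(s_a-1)/(2s_a)$ allows $q<(s_a+1)/s_a$.

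\textbf{Main obstacle.} The pointwise comparison $a(|\nabla u|)^k|D^2 u|\lesssim|\nabla v|$ follows immediately from the expansion of $|\nabla v|^2$ when the coefficient $k\theta(k\theta+2)$ is nonnegative throughout $\theta\in[i_a,s_a]$; in the mixed regime $i_a<0<s_a$ neither sign of $k$ works uniformly, so one must combine the two endpoints of \eqref{T_k} separately and absorb the bad cross-term through an approximation scheme (regularizing $a$ into a uniformly elliptic operator and passing the estimates to the limit). This is the sole non-routine point; the rest of the argument is a matter of tracking the admissible ranges of $k$ and $\beta$.
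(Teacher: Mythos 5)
Your route is genuinely different from the paper's. The paper never uses a pointwise form of the equation: it tests the regularized equation with $\psi^2/a_\varepsilon(|\nabla u_\varepsilon|)^\beta$, integrates by parts, and absorbs the resulting terms via weighted Young's inequality, reducing everything to the bound $\int_\Omega a_\varepsilon^{2-\beta}|D^2u_\varepsilon|^2\leq C$, i.e.\ the estimate of Theorem \ref{teo1INTRO} with $2k=2-\beta$ (this is Lemma \ref{ineq_peso_u}). Your pointwise scheme reaches the same ranges \eqref{casi_di_beta} because $\beta=2(1-k)$ at the endpoints of \eqref{T_k}, and it can in fact be streamlined: from $\tau\leq|f|=|\operatorname{div}(a_\varepsilon(|\nabla u_\varepsilon|)\nabla u_\varepsilon)|\leq C(n,i_a,s_a)\,a_\varepsilon(|\nabla u_\varepsilon|)\,|D^2u_\varepsilon|$ one gets directly $\tau^2 a_\varepsilon^{-\beta}\leq C^2 a_\varepsilon^{2-\beta}|D^2u_\varepsilon|^2$, so no H\"older step and no intermediate field $v$ are needed. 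Also, your ``main obstacle'' is not one: the coefficient in your expansion satisfies $1+k\theta(k\theta+2)=(1+k\theta)^2$, and for $k$ strictly inside \eqref{T_k} one checks $|k\theta|\leq\max\{|k i_a|,|k s_a|\}<1$ whenever $k\theta<0$ (e.g.\ $k|i_a|<\tfrac{1+|i_a|}{2}<1$ using $i_a>-1$), so the comparison $a^k|D^2u|\lesssim|\nabla v|$ holds uniformly in all three regimes, including the mixed one.

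The genuine gap is where the pointwise identities are asserted. You write them for the limit solution $u$ and for $v=a(|\nabla u|)^k\nabla u$, but expanding $\nabla v$ (or $\operatorname{div}(a(|\nabla u|)\nabla u)$) in terms of $D^2u$ requires $u\in W^{2,1}_{\mathrm{loc}}$ and the chain rule for $a$ at $\nabla u=0$, neither of which is available a priori; in the degenerate case $s_a\geq1$ the membership $u\in W^{2,q}$ is precisely the conclusion, so the argument as written is circular. The fix is to run the entire pointwise computation on the regularized solutions $u_\varepsilon\in C^3(\overline\Omega)$ of $-\operatorname{div}(a_\varepsilon(|\nabla u_\varepsilon|)\nabla u_\varepsilon)=f$ on smooth approximating domains with mollified data (as in Steps 1--3 of the proof of Theorem \ref{teo1INTRO} and Lemma \ref{approxcap}), obtain $\int_{B_\rho\cap\Omega}a_\varepsilon(|\nabla u_\varepsilon|)^{-\beta}\leq C$ uniformly, and pass to the limit via Fatou's lemma using $a_\varepsilon(|\nabla u_\varepsilon|)\to a(|\nabla u|)$ a.e.\ together with $|Z_u|=0$ (Remark \ref{misuraZu}); one must also note that the mollified $f_m$ retain the sign $|f_m|\geq\tau/2$ only on a slightly smaller ball. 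You mention regularization only as a patch for the mixed regime, whereas it is structurally necessary throughout. With that repair, your argument is correct and the $W^{2,q}$ consequence follows by the H\"older splitting you describe, which is essentially the one used in the paper.
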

As before, a basic version of Theorem \ref{peso_stima_Intro} holds for convex domains.
\begin{theorem}[Convex domains]\label{conv_d_peso}
    Let $\Omega$ be a bounded convex open set in $\mathbb{R}^n$. Let $\alpha$, ${u}$, $a$, ${f}$ and $\beta$ as in Theorem \ref{peso_stima_Intro}. Then \begin{equation*}
         \int_{\Omega\cap B_\rho(x)} \frac{1}{{(a(|\nabla  {u}|}))^\beta}\leq C.
     \end{equation*}
    Moreover, if $\inf_{t\in[0,M]} a(t)=0$, for every $M>0$, and $s_a\geq 1$, then $$u\in W^{2,q}(\Omega),\quad  \text{with } 1\leq q< \frac{s_a+1}{s_a}.$$
\end{theorem}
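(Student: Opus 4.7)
The plan is to deduce Theorem~\ref{conv_d_peso} by replaying the argument that yields Theorem~\ref{peso_stima_Intro}, with the sole modification that the underlying second-order regularity is supplied by Theorem~\ref{conv_d} in place of Theorem~\ref{teo1INTRO}. The weighted estimate is local (it lives on $B_\rho(x)\cap\Omega$), and the only way the $W^2X$ boundary regularity enters the proof of Theorem~\ref{peso_stima_Intro} is through the global conclusion $a(|\nabla u|)^k\nabla u\in W^{1,2}(\Omega)$. Since that same conclusion is available on arbitrary bounded convex domains via Theorem~\ref{conv_d}, the relaxation of the hypotheses is cost-free.

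The concrete steps are as follows. First, apply Theorem~\ref{conv_d} to obtain $a(|\nabla u|)^k\,\nabla u\in W^{1,2}(\Omega)$ for every admissible $k$ as in \eqref{T_k}. Second, choose $\eta\in C_c^\infty(\mathbb{R}^n)$ supported in $B_\rho(x)$ and, after regularizing the weight by $(a(|\nabla u|)+\varepsilon)^{-\beta}$ to handle the critical set of $u$, use the sign condition on $f$ on $B_\rho(x)\cap\overline\Omega$ to derive the pointwise lower bound
\[
\tau_x\!\int_{\Omega\cap B_\rho(x)} \frac{\eta^2}{(a(|\nabla u|)+\varepsilon)^{\beta}}\,dx \;\leq\; \left|\int_{\Omega\cap B_\rho(x)} \frac{f\,\eta^2}{(a(|\nabla u|)+\varepsilon)^{\beta}}\,dx\right|.
\]
Third, rewrite the right-hand side by testing the equation $-\operatorname{div}(a(|\nabla u|)\nabla u)=f$ against $\eta^2(a(|\nabla u|)+\varepsilon)^{-\beta}$, and control the resulting integrals via Young's inequality; the ranges in \eqref{casi_di_beta} are exactly those that make the bad term absorbable against the weighted $W^{1,2}$-information from the first step. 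Fourth, pass $\varepsilon\to 0^+$ by monotone convergence. Fifth, approximate $\Omega$ by a sequence of smooth convex domains, following the same limiting scheme used in the proof of Theorem~\ref{conv_d}, so that no boundary regularity beyond convexity is ever invoked.

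For the $W^{2,q}$ consequence under $\inf_{[0,M]}a=0$ and $s_a\geq 1$, combine $\int_\Omega|\nabla(a(|\nabla u|)^k\nabla u)|^2\,dx<\infty$ from the first step with the local weighted bound $\int_{\Omega\cap B_\rho(x)}a(|\nabla u|)^{-\beta}\,dx<\infty$, and apply Hölder's inequality in the form
\[
|D^2 u|^q \;\leq\; C\,\bigl(a(|\nabla u|)^{2k}\,|D^2 u|^2\bigr)^{q/2}\, a(|\nabla u|)^{-kq},
\]
with conjugate exponents $2/q$ and $2/(2-q)$; tuning $k$ close to $(s_a-1)/(2s_a)$ so that $\beta:=2kq/(2-q)$ falls in the admissible interval of \eqref{casi_di_beta} yields $q<(s_a+1)/s_a$, and a finite covering of $\overline\Omega$ promotes the local estimate to a global one. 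The main technical obstacle I anticipate is the bookkeeping of the boundary terms in the integration by parts of step three: $\eta$ does not vanish on $\partial\Omega\cap B_\rho(x)$, and the residual curvature contribution must either disappear because $\partial u/\partial\nu=0$ (Neumann) or be reduced using $u=0$ on $\partial\Omega$ (Dirichlet), in both cases leveraging the favorable sign of the second fundamental form of a convex set, exactly as in the proof of Theorem~\ref{conv_d}.
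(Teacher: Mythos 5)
Your proposal is correct and follows essentially the same route as the paper: the authors likewise reduce to the proof of Theorem \ref{peso_stima_Intro}, replacing the approximating domains in \textit{Step 2} by smooth convex ones and invoking Theorem \ref{conv_d} so that the second-order input (and hence the constant) is independent of $K_\Omega$, with the same H\"older argument for the $W^{2,q}$ conclusion. The only cosmetic difference is that the paper performs the weighted integration by parts on the $C^3$ solutions of the $a_\varepsilon$-regularized problems (Lemma \ref{ineq_peso_u}) rather than regularizing the weight of $u$ itself, but this is subsumed in the limiting scheme you already describe.
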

 Before starting the proofs of our results let us discuss the state of the art on the regularity theory for quasilinear elliptic equations. It is well known that solutions to \eqref{Dir_pb} and \eqref{Neu_pb} are not classical and in general, under suitable assumptions on the domain $\Omega$ and on the source term $f$, are of class $C^{1,\alpha}(\overline \Omega)$, see \cite{Lib0,DB,tolk}.

 The issue of studying second-order regularity of solutions to quasilinear elliptic equations has been the subject of extensive research.
 As mentioned above, if $f\in L^2(\Omega)$ the $W^{1,2}$ regularity of the stress field has been established in \cite{CiaMa}. Similar results, in this direction, are available in \cite{antonuovo,Sarsa,lou}.\\
 If $f$ is more regular, stronger second-order estimates can be obtained. In particular, in \cite{SMM_vec}, the authors proved sharp second-order estimates up to the boundary for the stress field $|\nabla u|^{\alpha-1}\nabla u$, with $\alpha > (p-1)/{2}$, of the $p$-Laplace equation, namely proving a global version of the local results in \cite{DamSci}. Note that the result in \cite{SMM_vec} was obtained for $C^3$-smooth domains via a fine argument based on the Fermi coordinates. As a corollary, the authors proved that the solution $u\in W^{2,2}(\Omega)$, for $1<p<3$. Via similar techniques, analogous results has been proved for more general elliptic operators, see \cite{Ce,CMS,S1,S2,EST}.

The anisotropic counterpart of the aforementioned results, which presents additional analytical challenges due to the lack of isotropy in the underlying structure, has been addressed in \cite{Anto1, ACCFM, BMV, CaRiSc}.\\
Stronger regularity results of solutions to $p$-Laplace equation, are obtained, for $p$ close to $2$, in \cite{BSV,FV,MRS}.

 Additionally, let us mention some results concerning second-order estimates in the vectorial case, as developed in \cite{M,MMSV,BaCiDiMa,Cma}. Here, following the ideas developed in \cite{SSV}, we obtain the counterpart, for a more general class of quasilinear elliptic equations, of the results proved in that paper.\\
To conclude, we mention several contributions within the framework of regularity theory, see \cite{AvKuMi,Bar,DeFMi,DM,DM2,DM3,DM4,DB,Dong,12,13,GuMo,KuuMin,16,23,24}. 

\textbf{Organization.} The paper is organized as follows. In Section \ref{S2_O}, we recall some well-known notions about Orlicz-Sobolev spaces. In Section \ref{S3_O}, we state some preliminary results. In Section \ref{S4_O}, we prove a global integral inequality needed in order to prove our main results. In Section \ref{second-order} and Section \ref{integrability_section}, we give the proofs of Theorem \ref{teo1INTRO} and Theorem \ref{peso_stima_Intro}.
\section{Function spaces and notion of weak solutions}\label{S2_O}
In this section, we recall the definitions and some properties of Orlicz and Orlicz-Sobolev spaces. In light of these definitions we give the notion of weak solutions of problems \eqref{Dir_pb} and \eqref{Neu_pb}. We refer to \cite{Ci2,Ci3,Ci4,Cma_S} for further details.
\subsection{Orlicz-Sobolev spaces}
We denote by $B:[0,\infty) \rightarrow [0,\infty]$ a Young function, i.e. a convex function such that $B(0)=0$. Moreover, if $0 < B(t) <\infty$ for $t>0$, 
and
\begin{equation*}
    \lim_{t \rightarrow 0} \frac{B(t)}{t}=0 \quad \text{and } \quad \lim_{t \rightarrow \infty} \frac{B(t)}{t}=\infty,
\end{equation*}
then $B$ is called an $N$-function.\\
The Young conjugate $\tilde B$ of a Young function $B$, is the Young function defined as:
\begin{equation*}
    \tilde B (t) = \sup \{ tr - B(r): r \geq 0\}.
\end{equation*}
Notice that if $B$ is an $N$-function, also $\tilde B$ is an $N$-function.\\
A Young function $B$ is said to belong to the class $\Delta_2$ if there exists a constant $C > 1$ such that:
\begin{equation*}
    B(2t) \leq CB(t) \quad \text{for } t >0.
\end{equation*}
Let $\Omega$ be an open bounded set in $\mathbb{R}^n$. The Orlicz space $L^B(\Omega)$ is the Banach space of functions $u : \Omega \rightarrow \mathbb{R}$ such that:
\begin{equation*}
    \|u\|_{L^B(\Omega)} = \inf \left \{  \lambda > 0 : \int_{\Omega} B\left( \frac{|u(x)|}{\lambda}\right) \ dx \leq 1\right \},
\end{equation*}
is finite.\\

The Orlicz-Sobolev space $W^{1,B}(\Omega)$ is the Banach space defined as:
\begin{equation*}
    W^{1,B}(\Omega) = \{ u  \in L^{B}(\Omega) : \text{is weakly differentiable in } \Omega \text{ and } |\nabla u| \in L^{B}(\Omega) \},
\end{equation*}
with associated norm:
\begin{equation*}
    \|u\|_{W^{1,B}(\Omega)} = \|u\|_{L^B(\Omega)}+\|\nabla u\|_{L^B(\Omega)}.
\end{equation*}
Moreover, we set $W^{1,B}_0(\Omega)$ as the Banach subspace of $W^{1,B}(\Omega)$ given by:
\begin{equation*}
\begin{split}
    W^{1,B}_0(\Omega)& = \{ u  \in W^{1,B}(\Omega) : \\
    &\text{the continuation of $u$ by $0$ outside $\Omega$ is weakly differentiable in } \mathbb{R}^n \}.
\end{split}
\end{equation*}
The Banach subspace $W_{\perp}^{1,B}(\Omega)$  of $W^{1,B}(\Omega)$ is defined as:
\begin{equation*}
    W_{\perp}^{1,B}(\Omega) = \{u \in W^{1,B}(\Omega) : u_\Omega=0\},
\end{equation*}
where
\begin{equation*}
    u_\Omega = \frac{1}{|\Omega|}\int_{\Omega} u(x)dx.
\end{equation*}
Let any young function $B$, such that
\begin{equation}\label{consizioneB}
   \int_0 \left( \frac{t}{B(t)} \right)^{\frac{1}{n-1}} \ dt < \infty.
\end{equation}
The Sobolev conjugate of $B$ is the Young function $B_n$ defined as:
\begin{equation*}
    B_n(t) = B(H_n^{-1}(t)) \quad \text{for } t \geq 0,
\end{equation*}
where:
\begin{equation*}
    H_n(s) = \left (\int_0^s \left( \frac{t}{B(t)} \right)^{\frac{1}{n-1}} \ dt\right)^{\frac{1}{n'}} \quad \text{for } s \geq 0,
\end{equation*}
and $H_n^{-1}$ denotes the left continuous inverse of $H_n$.
If $\Omega$ is bounded, a Sobolev-type inequality holds for every Young function $B$, and for every $u\in W^{1,B}_0(\Omega)$, namely:
\begin{equation}\label{Sobolev}
    \|u\|_{L^{B_n}(\Omega)} \leq C \|\nabla u\|_{L^B(\Omega)},
\end{equation}
with $C=C(n,|\Omega|)$. If, in addition, $\Omega$ has a Lipschitz boundary the same inequality holds for every function $u \in W_{\perp}^{1,B}(\Omega)$.\\
In particular, since it is quite easy to see that:
\begin{equation*}
    L^{B_n}(\Omega) \rightarrow L^B(\Omega),
\end{equation*}
we have:
\begin{equation}\label{Poincare}
    \|u\|_{L^{B}(\Omega)} \leq C \|\nabla u\|_{L^B(\Omega)},
\end{equation}
with $C=C(n,|\Omega|,B)$. Moreover the embedding is compact. Notice that condition \eqref{consizioneB} is irrelevant by embeddings of Orlicz spaces.\\
Let us now state two well-known theorems, the first one related to the density of smooth functions into Orlicz-Sobolev spaces; the second one recalls the reflexivity property of these spaces, see also \cite{DT}.
\begin{theorem}\label{imbedding}
    Let $\Omega$ be a bounded open set in $\mathbb{R}^n$, $n \geq 2$. Assume that $B$ is a Young function such that $B \in \Delta_2$. Then, the space $C_0^\infty(\Omega)$ is dense in $W_0^{1,B}(\Omega)$.\\
    If, in addition, $\Omega$ has a Lipschitz boundary, then $C^\infty(\overline \Omega)$ is dense in $W^{1,B}(\Omega)$.
\end{theorem}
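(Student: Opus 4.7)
The strategy is the classical mollification approach, adapted to the Orlicz setting. The only nontrivial feature is that smooth approximation by convolution need not converge in the $L^B$-norm without additional structure on $B$. The $\Delta_2$ hypothesis repairs this: it makes $L^B(\Omega)$ into a separable Banach space in which convergence in modular and convergence in Luxemburg norm are equivalent, and in which both mollification and translation are strongly continuous operators. Throughout, one exploits that for $u \in L^B(\mathbb{R}^n)$ the mollification $u_\epsilon = u * \rho_\epsilon$ satisfies, via Jensen's inequality applied to the mollifier kernel, the modular bound $\int_{\mathbb{R}^n} B(|u_\epsilon|)\,dx \leq \int_{\mathbb{R}^n} B(|u|)\,dx$, so that modular controls transfer to the regularizations.

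For the first assertion, given $u \in W_0^{1,B}(\Omega)$, extend it by zero to $\mathbb{R}^n$; this preserves the weak differentiability by the very definition of $W_0^{1,B}(\Omega)$. Choose cutoff functions $\eta_k \in C_c^\infty(\Omega)$ with $0 \leq \eta_k \leq 1$, $\eta_k \equiv 1$ on $\{x \in \Omega : \dist(x,\partial\Omega) > 2/k\}$, and $|\nabla \eta_k| \leq Ck$. A Vitali-type argument, combined with the $\Delta_2$ property, shows that $\eta_k u \to u$ in $W^{1,B}(\Omega)$; the delicate point is the term $u\,\nabla \eta_k$, which is controlled because $u$ vanishes near $\partial\Omega$ in the appropriate integral sense (a consequence of the boundary-trace characterization of $W_0^{1,B}$). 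Since each $\eta_k u$ has compact support in $\Omega$, mollifying at a scale $\epsilon_k \ll 1/k$ yields functions in $C_c^\infty(\Omega)$, and the continuity of mollification in $L^B$ permits a diagonal extraction converging to $u$ in $W^{1,B}(\Omega)$.

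For the second assertion, the Lipschitz regularity of $\partial\Omega$ permits a finite open cover $\{U_0, U_1, \dots, U_N\}$ of $\overline\Omega$ with $\overline{U_0} \subset \Omega$ and such that, for each $i \geq 1$, after a rigid motion $U_i \cap \Omega$ lies below the graph of a Lipschitz function. Taking a subordinate $C^\infty$ partition of unity $\{\varphi_i\}$, it suffices to approximate each $\varphi_i u \in W^{1,B}(\Omega)$ by smooth functions on $\overline\Omega$. The interior piece $\varphi_0 u$ is handled by ordinary mollification. For $i \geq 1$, the local graph structure makes $U_i \cap \Omega$ star-shaped with respect to a cone of inward directions; translating $\varphi_i u$ by a small vector $\tau e_i$ pointing into the interior and then mollifying at a scale much smaller than $\tau$ produces a function smooth on a neighborhood of $\overline{U_i \cap \Omega}$. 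The convergence as $\tau \to 0^+$ follows from the continuity of translations in $L^B$, which is again a consequence of $B \in \Delta_2$.

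The main technical obstacle is precisely the point at which the Orlicz structure departs from the $L^p$ one: in a general Orlicz space, bounded functions need not be dense in $L^B$, and neither convolution nor translation is in general a strongly continuous family of operators (this fails already for $B(t)=\infty\cdot\chi_{\{t>1\}}$, i.e.\ in the $L^\infty$ limit). Verifying that the $\Delta_2$ condition does yield these continuity properties is the decisive step, after which the remainder of the proof is a routine adaptation of the classical argument. The needed Orlicz-space prerequisites are developed in detail in \cite{Ci2, Ci3, Ci4, Cma_S, DT}, which the paper already invokes for the general Orlicz-Sobolev framework.
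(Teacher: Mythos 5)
The paper does not actually prove this statement: it is recalled as a classical fact with a pointer to the literature (\cite{DT} and the Orlicz--Sobolev references), so there is no in-paper argument to compare yours with. Your sketch of the \emph{second} assertion is the standard one and is sound in outline: a partition of unity subordinate to a covering of $\partial\Omega$ by Lipschitz graph neighbourhoods, inward translation, then mollification, with $B\in\Delta_2$ supplying separability of $L^B$, the norm-continuity of translations and of convolution with mollifiers, and the Jensen modular bound you quote.

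The proof of the \emph{first} assertion, however, has a genuine gap precisely at the point you flag as delicate. For the cutoff argument to close you must show $\|u\,\nabla\eta_k\|_{L^B(\Omega)}\to 0$ with $|\nabla\eta_k|\sim k$ supported in a $1/k$-neighbourhood of $\partial\Omega$; this amounts to a Hardy-type inequality $\|u/\dist(\cdot,\partial\Omega)\|_{L^B(\Omega)}\le C\|\nabla u\|_{L^B(\Omega)}$, and the ``boundary-trace characterization of $W_0^{1,B}$'' you invoke for it is unavailable for an arbitrary bounded open set: there is no trace operator without boundary regularity, and Hardy's inequality fails for general open sets already in the $L^p$ scale. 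The obstruction is not merely technical. With the paper's definition of $W_0^{1,B}(\Omega)$ (zero extension weakly differentiable in $\R^n$), the first assertion cannot be proved for a completely arbitrary bounded open set: take $n=2$, $B(t)=t^2$, $\Omega=B_1\setminus S$ with $S$ a radial slit, and $u\in C_c^\infty(B_1)$ with $u\equiv 1$ near part of $S$; its continuation by zero outside $\Omega$ coincides a.e.\ with $u$ itself, so $u|_\Omega\in W_0^{1,B}(\Omega)$, yet $u|_\Omega$ is not approximable in $W^{1,2}(\Omega)$ by elements of $C_0^\infty(\Omega)$, since those have vanishing trace on $S$ and the trace is continuous. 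Any correct argument therefore needs a mild hypothesis on $\partial\Omega$ (segment property, or Lipschitz, as in the references and in every application made in the paper), and under such a hypothesis the right mechanism is the same inward-translation-plus-mollification device you already use for the second assertion, applied to the zero extension of $u$ --- not a cutoff at scale $1/k$.
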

\begin{theorem}\label{reflexivity}
    Let $\Omega$ be a bounded open set in $\mathbb{R}^n$, $n \geq 2$. Let $B$ be a Young function such that $B\in \Delta_2$ and $\tilde B \in \Delta_2$. Then, the spaces $W^{1,B}(\Omega)$, $W^{1,B}_0(\Omega)$ and $W^{1,B}_{\perp}(\Omega)$ are reflexive.
\end{theorem}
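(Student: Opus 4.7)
The plan is to reduce the reflexivity of the three Orlicz--Sobolev spaces to the reflexivity of the base Orlicz space $L^{B}(\Omega)$, and then to use standard closed-subspace arguments. As a preliminary step, I would recall (or verify) the classical fact that, under the joint assumption $B,\tilde B\in\Delta_2$, the space $L^{B}(\Omega)$ is reflexive. The $\Delta_2$ condition on $B$ ensures that the topological dual of $L^{B}(\Omega)$ can be identified (up to equivalent norms) with $L^{\tilde B}(\Omega)$ via the natural duality pairing $\langle u,v\rangle=\int_\Omega uv\,dx$; the $\Delta_2$ condition on $\tilde B$ gives the symmetric identification $(L^{\tilde B}(\Omega))^{*}=L^{B}(\Omega)$. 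Composing the two isomorphisms, the canonical embedding of $L^{B}(\Omega)$ into its bidual is surjective, so $L^{B}(\Omega)$ is reflexive.

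Next, I would treat $W^{1,B}(\Omega)$ by the product-space trick. Consider the product Banach space $E:=L^{B}(\Omega)^{\,n+1}$, which is reflexive as a finite product of reflexive spaces, and the linear map
\begin{equation*}
T:W^{1,B}(\Omega)\to E,\qquad T(u)=\bigl(u,\partial_1 u,\dots,\partial_n u\bigr).
\end{equation*}
By definition of the Orlicz--Sobolev norm, $T$ is a linear isometry onto its image. It remains to show that $T(W^{1,B}(\Omega))$ is closed in $E$. If $T(u_k)\to(u,v_1,\dots,v_n)$ in $E$, then $u_k\to u$ and $\partial_i u_k\to v_i$ in $L^{B}(\Omega)$, and hence (using the continuous embedding $L^{B}(\Omega)\hookrightarrow L^{1}(\Omega)$, valid since $\Omega$ is bounded) also in $L^{1}(\Omega)$. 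Passing to the limit in the distributional identity $\int_\Omega u_k\partial_i\varphi\,dx=-\int_\Omega(\partial_i u_k)\varphi\,dx$ for $\varphi\in C^\infty_c(\Omega)$ shows $v_i=\partial_i u$, so $(u,v_1,\dots,v_n)=T(u)$ with $u\in W^{1,B}(\Omega)$. A closed subspace of a reflexive space is reflexive, so $W^{1,B}(\Omega)$ is reflexive.

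Finally, I would deduce the reflexivity of $W^{1,B}_0(\Omega)$ and $W^{1,B}_\perp(\Omega)$ by showing both are closed subspaces of $W^{1,B}(\Omega)$. For $W^{1,B}_\perp(\Omega)$ this is immediate: the linear functional $u\mapsto u_\Omega=\tfrac{1}{|\Omega|}\int_\Omega u\,dx$ is continuous on $W^{1,B}(\Omega)$ (again via $L^{B}(\Omega)\hookrightarrow L^{1}(\Omega)$), so its kernel is closed. For $W^{1,B}_0(\Omega)$, convergence in the $W^{1,B}$-norm of a sequence $u_k$ whose zero extensions $\bar u_k$ lie in $W^{1,1}(\mathbb{R}^n)$ gives a Cauchy sequence of extensions in $W^{1,1}(\mathbb{R}^n)$, and the limit is necessarily the zero extension of the limit $u$ (pointwise a.e.\ on $\mathbb{R}^n\setminus\Omega$), which is thus weakly differentiable in $\mathbb{R}^n$. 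Hence $W^{1,B}_0(\Omega)$ is closed in $W^{1,B}(\Omega)$ and therefore reflexive.

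The only delicate step is verifying closedness of $T(W^{1,B}(\Omega))$ in $E$ and of the two subspaces in $W^{1,B}(\Omega)$; this is where the boundedness of $\Omega$ (through the embedding $L^{B}(\Omega)\hookrightarrow L^{1}(\Omega)$) is used to transport weak derivatives under Orlicz-norm convergence. Once this is in place, the reflexivity statements follow routinely.
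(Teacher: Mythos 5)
Your proof is correct. The paper does not prove this statement at all: it is presented as a well-known fact with a pointer to the literature (Donaldson--Trudinger), and the argument you give is precisely the standard one behind that citation --- reflexivity of $L^B(\Omega)$ from the mutual duality $(L^B)^*\simeq L^{\tilde B}$ and $(L^{\tilde B})^*\simeq L^B$ under the two $\Delta_2$ conditions, followed by realizing $W^{1,B}(\Omega)$ as a closed subspace of $(L^B(\Omega))^{n+1}$ via $u\mapsto(u,\nabla u)$, and then exhibiting $W^{1,B}_0(\Omega)$ and $W^{1,B}_\perp(\Omega)$ as closed subspaces. The only cosmetic point: with the norm $\|u\|_{L^B}+\|\nabla u\|_{L^B}$ used in the paper, $T$ is an isomorphism onto its image rather than an isometry, but equivalence of norms is all that is needed since reflexivity is invariant under linear homeomorphism; your verification that the image is closed, and your treatment of the zero-extension for $W^{1,B}_0(\Omega)$ consistent with the paper's definition of that space, are both sound.
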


\subsection{Weak solutions.}
First of all, we recall the following (see \cite{Cma_S}).
\begin{proposition}
    Assume that $a \in C^1(0,\infty)$ and fulfills \eqref{cond_a}. Let $B$ be the function defined in \eqref{def_B}. Then, $B$ is a strictly convex $N$-function, and
    \begin{equation}\label{delta_cond}
        B \in \Delta_2 \quad \text{and } \quad \tilde B \in \Delta_2.
    \end{equation}
    Moreover, there exists a constant $C(i_a,s_a)$ such that
    \begin{equation}\label{bound_left}
        \tilde B(b(t)) \leq CB(t) \quad \text{for } t \geq 0.
    \end{equation}
\end{proposition}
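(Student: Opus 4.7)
My plan is to translate the two-sided bound on $ta'(t)/a(t)$ into an analogous two-sided bound for $tb'(t)/b(t)$, and then read off every conclusion from growth estimates on $b$ and $B$. Since $b(t)=a(t)t$ gives $tb'(t)/b(t)=ta'(t)/a(t)+1$, condition \eqref{cond_a} yields
\begin{equation*}
    0 < i_a+1 \le \frac{tb'(t)}{b(t)} \le s_a+1 <\infty \quad \text{for } t>0.
\end{equation*}
Integrating these logarithmic-derivative bounds, I would obtain the two-sided power comparisons
\begin{equation*}
    b(s)\bigl(t/s\bigr)^{i_a+1}\le b(t)\le b(s)\bigl(t/s\bigr)^{s_a+1} \quad \text{for } 0<s\le t,
\end{equation*}
which in particular show that $b$ is strictly increasing on $[0,\infty)$ with $b(0)=0$ and $b(t)\to\infty$.

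From this I would deduce that $B$ is a strictly convex $N$-function: strict convexity is immediate from $B'=b$ being strictly increasing, and the $N$-function conditions $B(t)/t\to 0$ as $t\to0^+$ and $B(t)/t\to\infty$ as $t\to\infty$ follow at once from the bounds $b(t)\asymp t^{i_a+1}$ near $0$ and $b(t)\gtrsim t^{i_a+1}$ at infinity, given that $i_a+1>0$.

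Next, for \eqref{delta_cond}, the bound $b(2t)\le 2^{s_a+1}b(t)$ obtained from the power comparison gives
\begin{equation*}
    B(2t)=\int_0^{2t}b(\tau)\,d\tau = 2\int_0^t b(2\sigma)\,d\sigma \le 2^{s_a+2}B(t),
\end{equation*}
so $B\in\Delta_2$. For $\tilde B\in\Delta_2$, I would observe that $\tilde B'=b^{-1}$, and the chain rule for the inverse yields
\begin{equation*}
    \frac{t(b^{-1})'(t)}{b^{-1}(t)} = \frac{1}{sb'(s)/b(s)}\bigg|_{s=b^{-1}(t)} \in \Bigl[\tfrac{1}{s_a+1},\tfrac{1}{i_a+1}\Bigr],
\end{equation*}
so $\tilde B$ satisfies the very same type of two-sided logarithmic bound (with finite exponents, since $i_a+1>0$), and the previous $\Delta_2$ argument applies verbatim to $\tilde B$.

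Finally, for \eqref{bound_left}, I would combine the Young-equality identity $\tilde B(b(t))=tb(t)-B(t)\le tb(t)$ with a lower bound for $B$. Using $b(\tau)\ge b(t)(\tau/t)^{s_a+1}$ for $\tau\le t$ (which is the other side of the power comparison, applied with $s=t$), I get
\begin{equation*}
    B(t)=\int_0^t b(\tau)\,d\tau \ge \frac{b(t)\,t}{s_a+2},
\end{equation*}
so $tb(t)\le (s_a+2)B(t)$ and hence $\tilde B(b(t))\le (s_a+2)B(t)$, giving \eqref{bound_left} with $C=s_a+2$. The only step requiring some care is the deduction of the $\Delta_2$ condition for $\tilde B$; doing it via the inverse-function computation above avoids any appeal to the $\nabla_2$ duality and keeps the argument elementary.
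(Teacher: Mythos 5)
Your proof is correct and complete. The paper does not actually prove this proposition but merely recalls it from the cited reference of Cianchi and Maz'ya, and your argument — passing from the bound on $ta'(t)/a(t)$ to $tb'(t)/b(t)=ta'(t)/a(t)+1\in[i_a+1,s_a+1]$, integrating to get the two-sided power comparison for $b$, and then deducing strict convexity, the $N$-function and $\Delta_2$ properties, and \eqref{bound_left} via the Young equality $\tilde B(b(t))=tb(t)-B(t)$ — is exactly the standard route taken there. The only cosmetic slip is the phrase ``$b(t)\asymp t^{i_a+1}$ near $0$'': the two-sided bound near the origin is $t^{s_a+1}\lesssim b(t)\lesssim t^{i_a+1}$, but since only the upper bound is needed for $B(t)/t\to 0$, nothing is affected.
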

\vspace{0.2cm}
Let $\Omega$ be a bounded open set in $\mathbb{R}^n$ and $f \in L^{\tilde{B_n}}(\Omega)$. A weak solution to \eqref{Dir_pb} is a function $u \in W_0^{1,B}(\Omega)$ such that:
\begin{equation}\label{weak_dir}
    \int_{\Omega} a(|\nabla u|)\nabla u \cdot \nabla \varphi = \int_{\Omega} f \varphi \ dx,
\end{equation}
for every $\varphi \in W_0^{1,B}(\Omega)$. Here, the dot “$\cdot$” denotes the standard scalar product.\\
Assume also that $\Omega$ is a Lipschitz domain. A weak solution to \eqref{Neu_pb} is a function $u \in W^{1,B}(\Omega)$ such that:
\begin{equation}\label{weak_neu}
    \int_{\Omega} a(|\nabla u|)\nabla u \cdot \nabla \varphi = \int_{\Omega} f \varphi \ dx,
\end{equation}
for every $\varphi \in W^{1,B}(\Omega)$.

\begin{remark}
    By \eqref{bound_left}, the left hand sides of \eqref{weak_dir} and \eqref{weak_neu} are well defined; moreover, by \eqref{Sobolev} the right hand sides are also well defined.\\
    We also point out that by \eqref{delta_cond}, equivalent formulations can be established using test functions in the spaces $C_0^\infty(\Omega)$ and $C^\infty(\overline \Omega)$, in view of Theorem \ref{imbedding}.
\end{remark}
\section{Preliminary results}\label{S3_O}
This section is devoted to the statement of several technical lemmas, which are essential for the proofs of the main results.\\
We begin with a lemma, see \cite{Cma_S} for details, in which any function $a$ as in Theorem \ref{teo1INTRO}-\ref{conv_d_peso}  is approximated by a family ${a_\varepsilon}$ of smooth functions whose indices $i_{a_\varepsilon}$ and $s_{a_\varepsilon}$ can be estimated in terms of $i_a$ and $s_a$, respectively.
\begin{lemma}\cite[Lemma $3.3$]{Cma_S}\label{lemma_a_eps}
Let $a:(0,\infty)\rightarrow(0,\infty)$ be $C^1(0,\infty)$ and that fulfills \eqref{cond_a}. Then there exists a family of functions $\{a_\varepsilon\}_{\varepsilon \in (0,1)} \subset C^\infty([0,\infty))$ such that:
\begin{equation*}
    a_\varepsilon : [0,\infty) \rightarrow (0,\infty),
\end{equation*}

\begin{equation*}
    \min\{i_a,0\}\leq i_{a_\varepsilon} \leq s_{a_\varepsilon} \leq \max\{s_a,0\},
\end{equation*}

\begin{equation*}
    \lim_{\varepsilon \rightarrow 0} b_\varepsilon = b \quad \text{uniformly in } [0,M] \text{ for every } M>0,
\end{equation*}
and hence,
\begin{equation*}
    \lim_{\varepsilon \rightarrow 0} B_\varepsilon = B \quad \text{uniformly in } [0,M] \text{ for every } M>0,
\end{equation*}
where $b_\varepsilon$ and $B_\varepsilon$ are defined, respectively, as in \eqref{b} and \eqref{def_B}, with $a$ replaced by $a_\varepsilon$.\\
Moreover, 
\begin{equation}\label{unif_conv}
    \lim_{\varepsilon \rightarrow 0} a_{\varepsilon}(|\xi|)\xi = a(|\xi|)\xi \quad \text{ uniformly in } \quad \{\xi \in \mathbb{R}^n : |\xi| \leq M\}, \ \forall M  > 0.
\end{equation}
\end{lemma}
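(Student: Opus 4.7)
The natural first move is to pass to the logarithmic variable, where the pointwise constraint \eqref{cond_a} on the index becomes a uniform $L^\infty$ bound on a first derivative. Setting $\psi(\tau) := \log a(e^\tau)$ for $\tau \in \R$, hypothesis \eqref{cond_a} is equivalent to $\psi \in C^1(\R)$ with $i_a \leq \psi'(\tau) \leq s_a$ for every $\tau$. The plan is to construct a smooth approximation $\psi_\varepsilon$ of $\psi$ whose derivative stays in the enlarged interval $[\min\{i_a,0\}, \max\{s_a,0\}]$ and which is constant on a left tail $(-\infty, \tau_\varepsilon]$ with $\tau_\varepsilon \to -\infty$ as $\varepsilon \to 0$; then define $a_\varepsilon(t) := \exp(\psi_\varepsilon(\log t))$ for $t > 0$ and extend by continuity to $t = 0$.

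Concretely, I would pick a smooth cutoff $\chi_\varepsilon : \R \to [0,1]$ with $\chi_\varepsilon \equiv 0$ on $(-\infty, -1/\varepsilon - 1]$ and $\chi_\varepsilon \equiv 1$ on $[-1/\varepsilon, +\infty)$, and a symmetric mollifier $\rho_\varepsilon$ with $\supp \rho_\varepsilon \subset (-\varepsilon, \varepsilon)$, and set
\begin{equation*}
g_\varepsilon := (\chi_\varepsilon \psi') * \rho_\varepsilon, \qquad \psi_\varepsilon(\tau) := \psi(-1/\varepsilon) + \int_{-1/\varepsilon}^{\tau} g_\varepsilon(\sigma)\, d\sigma.
\end{equation*}
Since $\chi_\varepsilon(\tau)\psi'(\tau)$ lies in the convex hull of $\{0\} \cup [i_a, s_a]$, which is exactly $[\min\{i_a,0\}, \max\{s_a,0\}]$, the convolution $g_\varepsilon$ lands in the same interval, and so does $\psi_\varepsilon' = g_\varepsilon$. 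By the support properties of $\rho_\varepsilon$, $g_\varepsilon \equiv 0$ on $(-\infty, -1/\varepsilon - 1 - \varepsilon]$, so $\psi_\varepsilon$ is constant there.

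For $t > 0$, $a_\varepsilon(t) = \exp(\psi_\varepsilon(\log t))$ is smooth and positive, and its index $t a_\varepsilon'(t)/a_\varepsilon(t) = \psi_\varepsilon'(\log t) = g_\varepsilon(\log t)$ lies in $[\min\{i_a,0\}, \max\{s_a,0\}]$ pointwise, giving the required bound on $i_{a_\varepsilon}, s_{a_\varepsilon}$. Since $\psi_\varepsilon$ is constant on a neighborhood of $-\infty$, $a_\varepsilon$ extends as a positive constant on $[0, e^{-1/\varepsilon - 1 - \varepsilon}]$, whence $a_\varepsilon \in C^\infty([0,\infty))$. For the uniform convergences, on any compact $[\log\delta, \log M]$ the function $\psi_\varepsilon$ agrees with $\psi * \rho_\varepsilon$ up to a correction of order $\varepsilon$ once $-1/\varepsilon$ is far to the left, so $\psi_\varepsilon \to \psi$ locally uniformly; exponentiating and changing variables back gives $a_\varepsilon \to a$ and $b_\varepsilon \to b$ uniformly on each $[\delta, M] \subset (0,\infty)$. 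Near $t = 0$ I would use the uniform lower index bound $tb_\varepsilon'/b_\varepsilon \geq 1 + \min\{i_a, 0\} > 0$ together with $b_\varepsilon(M) \to b(M)$ to deduce $b_\varepsilon(t) \leq C\,t^{1 + \min\{i_a, 0\}}$ on $[0,M]$ with $C$ independent of $\varepsilon$, and the analogous bound for $b$; this makes both quantities uniformly small on some $[0,\delta]$, upgrading convergence of $b_\varepsilon$ to uniform on all of $[0,M]$. Finally, the pointwise identity $|a_\varepsilon(|\xi|)\xi - a(|\xi|)\xi| = |b_\varepsilon(|\xi|) - b(|\xi|)|$ yields \eqref{unif_conv}.

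The main obstacle is realising simultaneously the smoothness and positivity of $a_\varepsilon$ at the endpoint $t = 0$: in the original variable, $a$ typically blows up or vanishes there (think $a(t) = t^{p-2}$ with $p \neq 2$), so a direct mollification of $a$ on $[0,\infty)$ cannot produce an element of $C^\infty([0,\infty))$. The logarithmic change of variables circumvents this cleanly by sending $t = 0$ to $\tau = -\infty$, a region where the cutoff $\chi_\varepsilon$ forces $\psi_\varepsilon'$ to vanish, so that $a_\varepsilon$ is constant — and therefore trivially $C^\infty$ — on a right-neighborhood of $0$. This same device explains why $0$ must be admitted into the enlarged index interval: on the tail where $a_\varepsilon$ is constant one has $ta_\varepsilon'(t)/a_\varepsilon(t) \equiv 0$, so the original range $[i_a, s_a]$ cannot be preserved when it does not already contain $0$.
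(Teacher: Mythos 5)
Your construction is correct, but it is genuinely different from the one the paper relies on. The paper does not prove the lemma itself; it cites \cite[Lemma 3.3]{Cma_S}, and the remark following the statement records that the approximants there have the specific form \eqref{a_cap}, $a_\varepsilon(t)=\hat a_\varepsilon((\varepsilon+t^2)^{1/2})$, where $\hat a_\varepsilon$ is a smooth approximation of $a$ on $(0,\infty)$ with indices still in $[i_a,s_a]$. In that construction smoothness at $t=0$ comes for free because $a_\varepsilon$ is a smooth function of $t^2$ with argument bounded below by $\sqrt{\varepsilon}$, and the enlargement of the index interval to $[\min\{i_a,0\},\max\{s_a,0\}]$ arises from the factor $t^2/(\varepsilon+t^2)\in[0,1)$ in the chain rule, which interpolates the index of $\hat a_\varepsilon$ with $0$. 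Your route instead works in the logarithmic variable, cuts off $\psi'=\frac{d}{d\tau}\log a(e^\tau)$ near $\tau=-\infty$ and mollifies, so that $a_\varepsilon$ is literally constant near $t=0$; the index bound follows from the same convex-combination mechanism ($\chi_\varepsilon\psi'$ taking values in the convex hull of $\{0\}\cup[i_a,s_a]$), and your explanation of why $0$ must be admitted into the index interval is exactly right. The convergence arguments are sound: $|\psi\ast\rho_\varepsilon-\psi|=O(\varepsilon)$ uniformly because $|\psi'|\le\max\{|i_a|,|s_a|\}$, and the uniform smallness of $b_\varepsilon$ and $b$ near $t=0$ follows from $tb_\varepsilon'/b_\varepsilon\ge 1+\min\{i_a,0\}>0$, which uses $i_a>-1$ from \eqref{cond_a}. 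The one caveat worth flagging is that the paper subsequently uses not just the statement of the lemma but the specific form \eqref{a_cap} and the two-sided bound \eqref{estimate_a_eps}; your $a_\varepsilon$ satisfies analogous power-law bounds (by integrating the index inequality from $t=1$) but is not of the form \eqref{a_cap}, so if one wanted to substitute your construction into the rest of the paper, those later steps would need to be rephrased in terms of the index bounds rather than the explicit formula.
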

\begin{remark}
Notice that, from a close inspection of the proof of Lemma \ref{lemma_a_eps}, $a_\varepsilon$ is chosen in the following way:
\begin{equation}\label{a_cap}
    a_\varepsilon(t) = \hat a_\varepsilon((\varepsilon+t^{2})^\frac{1}{2}),
\end{equation}
for some function $\hat a_\varepsilon \in C^{\infty}([0,\infty))$ and such that $a_\varepsilon(t)>0$ if $t>0$ and,
\begin{equation}\label{cond_a_hat}
 i_a \leq \inf_{t>0} \frac{t \hat a'_\varepsilon(t)}{\hat a_\varepsilon(t)} \leq \sup_{t>0} \frac{t \hat a'_\varepsilon(t)}{\hat a_\varepsilon(t)} \leq s_a.
\end{equation}
Moreover, we have that:
\begin{equation}\label{conv_a_eps}
    \lim_{\varepsilon \rightarrow 0} \hat a_\varepsilon = a \quad \text{ uniformly in [$L,M$] for every $M>L>0$},
\end{equation}
and, for $t\geq 0$,
\begin{equation}\label{estimate_a_eps}
    a(1)\min\{(\varepsilon + t^2)^\frac{i_{a}}{2},(\varepsilon + t^2)^\frac{s_{a}}{2}\} \leq a_{\varepsilon}(t) \leq a(1)\max\{(\varepsilon + t^2)^\frac{i_{a}}{2},(\varepsilon + t^2)^\frac{s_{a}}{2}\}.
\end{equation}
\end{remark}
\subsection{Setting of the problem}.
We consider $\Omega$ a bounded Lipschitz domain and we assume that the functions of $(n-1)$ variables that
locally describe the boundary of $\Omega$ are twice weakly differentiable with second derivatives in a suitable space $X$, briefly $\partial \Omega \in  W^2X$.
An open set $\Omega$ in $\R^n$ is called a Lipschitz domain if there exist constants $L_\Omega>0$ and $R_\Omega\in (0,1)$ such that, for every $x_0\in \partial \Omega$ and $R \in (0, R_\Omega]$ there exist an orthogonal coordinate system centered at $0\in \R^n$ and an $L_\Omega$-Lipschitz continuous function $\psi : B'_{R}\rightarrow (-l,l)$, where $B'_{R}$ denotes the ball in $R^{n-1}$, centered at $0' \in \R^{n-1}$ and with radius $R$, and $l=R(1+L_\Omega)$, satisfying 

\begin{equation*}
    \begin{split}
        \partial \Omega \cap \left(B'_R \times (-l,l)\right)=\{(x',\psi(x')) : x'\in B'_R\} \\ \Omega \cap \left(B'_R \times (-l,l)\right)=\{(x',x_n) : \psi(x')<x_n<l\}.
    \end{split}
\end{equation*}

 Moreover, we call Lipschitz characteristic of $\Omega$ the couple $\mathcal{L}_\Omega=(L_\Omega,R_\Omega)$. 

\

\

Let $\Omega$ be a Lipschitz domain with Lipschitz characteristic $\mathcal{L}_\Omega = (L_\Omega, R_\Omega)$, and let $\rho\in L^1(\partial \Omega)$ be a nonnegative function. We set, for $r \in(0, R_\Omega]$,

\begin{equation}\label{defK}
    \mathcal{K}_{\Omega,\rho}(r)=\sup_{E\subset B_r(x) ,  x\in\partial \Omega} \frac{\int_{\partial \Omega\cap E}\rho \,d\mathcal{H}^{n-1}}{cap(B_r(x),E)},
\end{equation}
and
\begin{equation}\label{integrability}
    \Psi_{\Omega,\rho}(r)=
    \begin{cases}
        \sup_{x \in \partial \Omega} \| \rho \|_{L^{n-1,\infty}(\partial \Omega \cap B_r(x))} & \mbox{if $n \geq 3$}\\
        \sup_{x \in \partial \Omega} \| \rho \|_{L^{1,\infty}\log{L}(\partial \Omega \cap B_r(x))} & \mbox{if $n = 2$}
    \end{cases}
\end{equation}
\begin{comment}
\begin{equation}\label{integrability_n}
    \Psi_{\Omega,\rho}(r)=
        \sup_{x \in \partial \Omega} \| \rho \|_{\tilde X(\partial \Omega \cap B_r(x))}
\end{equation}
\end{comment}

Here, $B_r(x)$ stands for the ball centered at $x$, with radius $r$, the notation $cap(B_r(x),E)$ denotes the capacity of the set $E$ relative to the ball $B_r(x)$, and $\mathcal{H}^{n-1}$ is the $(n-1)$-dimensional Hausdorff measure. Moreover, we denote with $d_\Omega$ the diameter of the domain $\Omega$.

Recall that the Lorentz spaces $L^{p,q}(\partial \Omega)$, with $p > 1$, are the Banach function spaces endowed with the norm:
\begin{equation*}
    \| f \|_{L^{p,q}(\partial \Omega)} = 
    \begin{cases}
        \left \{ \int_0^{\mathcal{H}^{n-1}(\partial\Omega)} [t^\frac{1}{p} f^{**}(t)]^q \frac{dt}{t}\right\}^\frac{1}{q}& \mbox{if $1 \leq q < \infty$}\\
        \sup_{0<t<\mathcal{H}^{n-1}(\partial\Omega)} \{t^\frac{1}{p} f^{**}(t)\}& \mbox{if $q=\infty$}
    \end{cases}
\end{equation*}
for a measurable function $f$ on $\partial \Omega$. Here, $f^{**}(t) = \frac{1}{t} \int_{0}^t f^*(s)ds$ for $t>0$, where $f^*$ stands for the decreasing rearrangement of $f$. The Lorentz-Zygmund space $L^{p,q}\log L(\partial \Omega)$, with $p\geq 1$, is equipped with the norm
\begin{equation*}
    \| f \|_{L^{p,q}\log L(\partial \Omega)} = 
    \begin{cases}
        \left \{ \int_0^{\mathcal{H}^{n-1}(\partial\Omega)} [t^\frac{1}{p} \log(1+\frac{1}{t}) f^{**}(t)]^q \frac{dt}{t}\right\}^\frac{1}{q}& \mbox{if $1 \leq q < \infty$}\\
        \sup_{0<t<\mathcal{H}^{n-1}(\partial\Omega)} \{t^\frac{1}{p} \log(1+\frac{1}{t}) f^{**}(t)\}& \mbox{if $q=\infty$}.
    \end{cases}
\end{equation*}

Moreover, we set 
\begin{equation}\label{KB}
    \mathcal{K}_{\Omega}(r)=\sup_{E\subset B_r(x) ,  x\in\partial \Omega} \frac{\int_{\partial \Omega\cap E}|\mathcal{B}| \,d\mathcal{H}^{n-1}}{cap(B_r(x),E)},
\end{equation}
and
\begin{equation}\label{integrabilityB}
    \Psi_{\Omega}(r)=\sup_{x \in \partial \Omega} \| \mathcal{B}\|_{\tilde X(\partial \Omega \cap B_r(x))}
\end{equation}
where $\mathcal{B}$ stands for the weak second fundamental form on $\partial \Omega$, $|\mathcal{B}|$ its norm and
\begin{equation*}
    \tilde X=
    \begin{cases}
        L^{n-1,\infty} & \mbox{if $n \geq 3$,}\\
        L^{1,\infty}\log{L} & \mbox{if $n =2$.}
    \end{cases}
\end{equation*}

\begin{comment}
\begin{equation*}
    X=
    \begin{cases}
        L^{n-1,1} & \mbox{if $n \geq 3$,}\\
        L \log{L} & \mbox{if $n =2$.}
    \end{cases}
\end{equation*}
\end{comment}

\begin{remark}
    The integrability assumption $\partial \Omega \in W^2 X$ ensures that the functions that locally describe the boundary have continuous first-order derivatives, hence $\partial \Omega \in C^1$.
\end{remark}

\begin{remark}\label{antony2}
    Notice that if $\partial \Omega \in W^2 X$, where $X$ is defined as in \eqref{X}, then the following condition holds:
    \begin{equation}\label{cond_B}
        \lim_{r \rightarrow 0^+} \left (\sup_{x \in \partial \Omega} \| \mathcal{B} \|_{\tilde X(\partial \Omega \cap B_r(x))}\right) < c,
    \end{equation}
    for a suitable positive constant $c=c(n,N,p,L_\Omega,d_\Omega)$. Indeed,
    \begin{equation*}
        L^{n-1,1} \subset L^{n-1} \subset L^{n-1,\infty},
    \end{equation*}
    and
    \begin{equation*}
        L \log L \subset L^{1,\infty}\log L.
    \end{equation*}
    In particular, condition \eqref{cond_B} is fulfilled if $\partial \Omega \in C^2$.
\end{remark}

Now we state two lemmas which will be useful to us later, see \cite[Section 6]{ACCFM} and \cite{Cma,Ant}. 
\begin{lemma}\label{antoninho2}
    Let $\Omega$ be a bounded Lipschitz domain in $\R^n$, $n\geq 2$, with Lipschitz characteristic $\mathcal{L}_\Omega =(L_\Omega, R_\Omega)$. Assume that $\rho$ is a nonnegative function on $\partial \Omega$ such that  $\rho\in L^1(\partial \Omega)$. Then,
    \begin{equation}
        \mathcal{K}_{\Omega,\rho}(r) \leq c \Psi_{\Omega,\rho}(r),
    \end{equation}
    for some constant $c=c(n,L_\Omega)$, for every $r \in (0,R_\Omega]$.
\end{lemma}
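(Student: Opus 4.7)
The plan is to bound the numerator $\int_{\partial \Omega \cap E} \rho \, d\mathcal{H}^{n-1}$ by the product of a Lorentz (or Lorentz--Zygmund) norm of $\rho$ and an appropriate measure-theoretic quantity of $\partial\Omega \cap E$, and then to absorb that quantity into the capacity via a trace-type isocapacitary inequality valid on Lipschitz domains.

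First, fix $x \in \partial\Omega$, $r \in (0, R_\Omega]$, and a set $E \subset B_r(x)$. For $n \geq 3$, I would apply the Hölder inequality in Lorentz spaces with the conjugate pair $(n-1, \infty)$ and $((n-1)', 1) = \bigl(\tfrac{n-1}{n-2}, 1\bigr)$:
\begin{equation*}
\int_{\partial \Omega \cap E} \rho \, d\mathcal{H}^{n-1}
\;\leq\; C\, \|\rho\|_{L^{n-1,\infty}(\partial\Omega\cap B_r(x))}\,\|\chi_{\partial\Omega\cap E}\|_{L^{(n-1)',1}(\partial\Omega)}.
\end{equation*}
Since the $L^{p,1}$ quasinorm of a characteristic function is equivalent to a power of measure, one has $\|\chi_{\partial\Omega\cap E}\|_{L^{(n-1)',1}} \lesssim \mathcal{H}^{n-1}(\partial\Omega\cap E)^{(n-2)/(n-1)}$.

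The second ingredient is the trace--isocapacitary estimate for Lipschitz domains, which, for any $E \subset B_r(x)$ with $x \in \partial\Omega$, yields
\begin{equation*}
\mathcal{H}^{n-1}(\partial\Omega\cap E) \;\leq\; c(n,L_\Omega)\, \mathrm{cap}(B_r(x), E)^{\frac{n-1}{n-2}}.
\end{equation*}
This is the standard Lipschitz-boundary trace inequality obtained by flattening the boundary and using the classical relative capacity estimate; the dependence on $L_\Omega$ comes from the bi-Lipschitz change of variables. Raising to the power $(n-2)/(n-1)$ gives $\mathcal{H}^{n-1}(\partial\Omega\cap E)^{(n-2)/(n-1)} \leq c\,\mathrm{cap}(B_r(x),E)$. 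Combining with the Hölder step, dividing by $\mathrm{cap}(B_r(x),E)$, and taking the supremum over admissible $E$ and $x$ produces
\begin{equation*}
\mathcal{K}_{\Omega,\rho}(r) \;\leq\; c(n,L_\Omega)\,\Psi_{\Omega,\rho}(r),
\end{equation*}
which is the claim for $n \geq 3$.

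For $n = 2$, the Lorentz scale is replaced by the Lorentz--Zygmund space $L^{1,\infty}\log L$, and the corresponding Hölder duality pairs it with an $L^{\infty}\log^{-1} L$-type norm of $\chi_{\partial\Omega\cap E}$ which scales like $\mathcal{H}^{1}(\partial\Omega\cap E)\log\bigl(1+1/\mathcal{H}^{1}(\partial\Omega\cap E)\bigr)^{-1}$. The same idea works but uses the planar logarithmic trace--isocapacitary inequality $\mathcal{H}^{1}(\partial\Omega\cap E) \leq c(L_\Omega)\,\mathrm{cap}(B_r(x),E)\,\log\bigl(1+ c/\mathrm{cap}(B_r(x),E)\bigr)$, which again follows by flattening the boundary.

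The main obstacle is not the Hölder inequality itself but the clean bookkeeping of constants in the trace--isocapacitary estimates: one must verify that the bi-Lipschitz flattening of $\partial\Omega$ produces a constant depending only on $n$ and $L_\Omega$, independent of the point $x$ and of $r \in (0, R_\Omega]$, and that the planar logarithmic version in the case $n=2$ is applied on the correct scale so that $\Psi_{\Omega,\rho}(r)$, not a larger quantity, appears on the right-hand side.
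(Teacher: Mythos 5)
Your two-step strategy --- Hölder's inequality in the Lorentz scale followed by a boundary trace--isocapacitary inequality on Lipschitz graphs --- is exactly the argument behind this lemma in the sources the paper cites for it (\cite[Section 6]{ACCFM}, \cite{Cma,Ant}); the paper itself gives no proof beyond that citation. For $n\ge 3$ your outline is correct and complete: the O'Neil--Hölder inequality gives $\int_{\partial\Omega\cap E}\rho\,d\mathcal{H}^{n-1}\le \|\rho\|_{L^{n-1,\infty}}\|\chi_{\partial\Omega\cap E}\|_{L^{\frac{n-1}{n-2},1}}\simeq \|\rho\|_{L^{n-1,\infty}}\,\mathcal{H}^{n-1}(\partial\Omega\cap E)^{\frac{n-2}{n-1}}$, and $\mathcal{H}^{n-1}(\partial\Omega\cap E)^{\frac{n-2}{n-1}}\le c(n,L_\Omega)\,\mathrm{cap}(B_r(x),E)$ follows by flattening, with a constant uniform in $x$ and $r\le R_\Omega$ because the Lipschitz characteristic provides a single bi-Lipschitz constant at every boundary point and scale, and because the relative capacity dominates the Newtonian one.

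The $n=2$ case as written contains a genuine quantitative error. The correct pairing with $L^{1,\infty}\log L$ comes from $\int_{\partial\Omega\cap E}\rho\,d\mathcal{H}^{1}\le \mathcal{H}^{1}(\partial\Omega\cap E)\,\rho^{**}\bigl(\mathcal{H}^{1}(\partial\Omega\cap E)\bigr)\le \|\rho\|_{L^{1,\infty}\log L}\,\bigl[\log\bigl(1+1/\mathcal{H}^{1}(\partial\Omega\cap E)\bigr)\bigr]^{-1}$; your stated dual quantity carries an extra factor of $\mathcal{H}^{1}(\partial\Omega\cap E)$, and the resulting Hölder-type inequality is false --- test it with $\rho=\chi_{\partial\Omega\cap E}$ and $s=\mathcal{H}^{1}(\partial\Omega\cap E)$ small: the left-hand side is $s$ while your right-hand side is of order $s^{2}$. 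This is the single point where your chain breaks, since the logarithmic isocapacitary inequality you invoke is true (indeed weaker than needed). With the corrected pairing, the capacitary input you actually need is the standard planar relative estimate $\bigl[\log\bigl(1+1/\mathcal{H}^{1}(\partial\Omega\cap E)\bigr)\bigr]^{-1}\le c(L_\Omega)\,\mathrm{cap}(B_r(x),E)$, which holds uniformly precisely because the paper normalizes $R_\Omega<1$ (planar capacity is scale-sensitive). Once that display is repaired, the $n=2$ argument closes in the same way as $n\ge 3$.
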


\begin{lemma}\label{antoninho}
    Let $\Omega$ be a bounded Lipschitz domain in $\R^n$, $n\geq 2$, with Lipschitz characteristic $\mathcal{L}_\Omega =(L_\Omega, R_\Omega).$ Assume that $\rho$ is a nonnegative function on $\partial \Omega$ such that  $\rho\in L^1(\partial \Omega)$. Then, there exists a constant $C(n)$, such that  
    \begin{equation}\label{antony}
        \int_{\partial \Omega \cap B_r(x_0)}v^2\rho \,d\mathcal{H}^{n-1} \leq C(n)(1+L_\Omega)^4K_{\Omega,\rho}(r)\int_{\Omega\cap B_r(x_0)}|\nabla v|^2 \,dx
    \end{equation}
    for every $x_0\in \partial \Omega$, for every $r\in (0,R_\Omega]$ and for every $v \in W^{1,2}_0(B_r(x_0))$.
\end{lemma}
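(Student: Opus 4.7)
The plan is to follow Maz'ya's capacitary approach to trace inequalities. By density it suffices to consider $v\in C_c^\infty(B_r(x_0))$. Set $d\mu=\rho\,d\mathcal{H}^{n-1}$, regarded as a measure on $B_r(x_0)$ supported on $\partial\Omega\cap B_r(x_0)$, so that the left-hand side of \eqref{antony} equals $\int v^2\,d\mu$. By the layer-cake representation,
$$
\int v^2\,d\mu \;=\; 2\int_0^\infty t\,\mu\bigl(\{|v|\geq t\}\bigr)\,dt.
$$

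For each $t>0$, the superlevel set $E_t=\{|v|\geq t\}$ is contained in $B_r(x_0)$ since $v$ has compact support there. Hence the very definition of $\mathcal{K}_{\Omega,\rho}(r)$ in \eqref{defK} yields
$$
\mu(E_t)\;=\;\int_{\partial\Omega\cap E_t}\rho\,d\mathcal{H}^{n-1}\;\leq\;\mathcal{K}_{\Omega,\rho}(r)\,\mathrm{cap}\bigl(B_r(x_0),E_t\bigr).
$$
Inserting this into the layer-cake identity and applying the classical Maz'ya capacitary integral inequality
$$
2\int_0^\infty t\,\mathrm{cap}(B_r(x_0),E_t)\,dt \;\leq\; C(n)\int_{B_r(x_0)}|\nabla v|^2\,dx
$$
yields the estimate up to a purely dimensional constant.

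The Lipschitz factor $(1+L_\Omega)^4$ enters through the boundary geometry. In order to compare the surface measure on the rough set $\partial\Omega$ with the relative capacity in $B_r(x_0)$ inside the same ambient space, one locally flattens $\partial\Omega$ within each coordinate cylinder $B'_R\times(-l,l)$ via the bi-Lipschitz map $(x',x_n)\mapsto(x',x_n-\psi(x'))$. Since this map and its inverse have Lipschitz constants controlled by $1+L_\Omega$, Hausdorff surface measure, Lebesgue volume, and gradient norms each transform through Jacobian factors bounded by powers of $1+L_\Omega$; collecting them produces the exponent~$4$.

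The main obstacle is the precise bookkeeping of these geometric factors: the capacitary trace inequality itself is classical, but isolating exactly four powers of $1+L_\Omega$ in front of $\mathcal{K}_{\Omega,\rho}(r)$ requires tracking the Jacobians separately through the capacity estimate, the surface integration, and the transformation of $|\nabla v|^2$. This is exactly the accounting performed in \cite[Section 6]{ACCFM} and in \cite{Cma,Ant}, to which one ultimately appeals.
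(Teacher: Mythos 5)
The paper itself does not prove this lemma: it is recalled from \cite[Section 6]{ACCFM} and \cite{Cma,Ant}, and your sketch follows the same capacitary strategy used in those references, so the overall route is the right one. However, as written your argument establishes a weaker inequality than the one stated. The layer-cake decomposition, the definition \eqref{defK}, and Maz'ya's capacitary strong-type inequality combine to give
$\int_{\partial\Omega\cap B_r(x_0)}v^2\rho\,d\mathcal{H}^{n-1}\le C(n)\,\mathcal{K}_{\Omega,\rho}(r)\int_{B_r(x_0)}|\nabla v|^2\,dx$,
with the Dirichlet integral over the \emph{whole} ball and no Lipschitz factor. The statement requires the Dirichlet integral only over $\Omega\cap B_r(x_0)$, which is strictly stronger, and nothing in your steps bridges this.

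The missing ingredient is an extension argument, and your explanation of where $(1+L_\Omega)^4$ enters points at the wrong place. Comparing the measure $\rho\,d\mathcal{H}^{n-1}$ with the relative capacity costs nothing here: that comparison is exactly what $\mathcal{K}_{\Omega,\rho}(r)$ encodes by definition (its quantitative version in terms of $\Psi_{\Omega,\rho}$ is Lemma \ref{antoninho2}, a separate statement). What one must actually do is discard the values of $v$ on $B_r(x_0)\setminus\overline\Omega$ and replace them by an extension of $v|_{\Omega\cap B_r(x_0)}$ — for instance by reflection across the graph of $\psi$ in the local coordinate cylinder, via the bi-Lipschitz map you mention — obtaining a function $\tilde v$ with the same trace on $\partial\Omega\cap B_r(x_0)$, compactly supported in a comparable ball, and satisfying $\int|\nabla\tilde v|^2\,dx\le C(n)(1+L_\Omega)^4\int_{\Omega\cap B_r(x_0)}|\nabla v|^2\,dx$; the four powers of $1+L_\Omega$ arise from the Lipschitz constant of the reflection and its Jacobian acting on the quadratic quantity $|\nabla v|^2$. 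One then runs your capacitary argument on $\tilde v$ (minding that its support may sit in a slightly larger ball, which is absorbed by the monotonicity of the relative capacity and of $\mathcal{K}_{\Omega,\rho}$ in $r$). Without this step neither the restriction of the energy to $\Omega\cap B_r(x_0)$ nor the factor $(1+L_\Omega)^4$ is obtained.
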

We conclude this section with the following result (see \cite[Lemma 3.3]{BaCiDiMa}):
\begin{lemma}\label{stima_punt}
    Let $\omega \in R^n$ be such that $|\omega|=1$. Then
    \begin{equation*}
        |H\omega|^2 - \frac{1}{2} |\omega \cdot H\omega|^2 - \frac{1}{2} |H|^2 \leq 0,
    \end{equation*}
    for every $H \in R^{n\times n}_{\operatorname{sym}}$.
\end{lemma}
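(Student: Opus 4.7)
The plan is to reduce the inequality to a coordinate computation by choosing an orthonormal basis adapted to the direction $\omega$. Since $|\omega|=1$, I complete $\omega$ to an orthonormal basis $\{\omega = e_1, e_2, \dots, e_n\}$ of $\mathbb{R}^n$; this is legitimate because the three scalar quantities $|H\omega|^2$, $|\omega\cdot H\omega|^2$, and $|H|^2$ are all invariant under orthogonal change of basis (the last one being the Frobenius norm of the symmetric matrix $H$).

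In these coordinates, writing the entries of $H$ as $h_{ij}$ with $h_{ij}=h_{ji}$ by symmetry, the relevant quantities become
\begin{equation*}
    |H\omega|^2 = \sum_{i=1}^n h_{i1}^2, \qquad |\omega \cdot H\omega|^2 = h_{11}^2, \qquad |H|^2 = \sum_{i,j=1}^n h_{ij}^2.
\end{equation*}
Next I would split off the first row and column and use the symmetry $h_{1j}=h_{j1}$ to expand
\begin{equation*}
    |H|^2 = h_{11}^2 + 2\sum_{i\geq 2} h_{i1}^2 + \sum_{i,j\geq 2} h_{ij}^2.
\end{equation*}
Substituting these expressions into the left-hand side of the claimed inequality, the terms $h_{11}^2$ cancel exactly against $\frac{1}{2}h_{11}^2+\frac{1}{2}h_{11}^2$, and the $\sum_{i\geq 2}h_{i1}^2$ from $|H\omega|^2$ cancels exactly with the corresponding $\sum_{i\geq 2}h_{1i}^2$ contribution from $\frac{1}{2}|H|^2$, leaving only
\begin{equation*}
    |H\omega|^2 - \tfrac{1}{2}|\omega \cdot H\omega|^2 - \tfrac{1}{2}|H|^2 = -\tfrac{1}{2}\sum_{i,j\geq 2} h_{ij}^2 \leq 0,
\end{equation*}
which gives the result.

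There is really no analytic difficulty here; the only thing that needs care is the orthogonal-invariance justification for picking the basis and the bookkeeping of the symmetric off-diagonal terms (the factor $2$ in the expansion of $|H|^2$), which is precisely what produces the exact cancellation. Equality holds if and only if $h_{ij}=0$ for all $i,j\geq 2$, i.e.\ when $H$ has nonzero entries only in the first row and column in the $\omega$-adapted basis.
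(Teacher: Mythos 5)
Your proof is correct: the orthogonal invariance of the three quantities is properly justified, the coordinate expansion is accurate, and the cancellation yields exactly $-\tfrac12\sum_{i,j\geq 2}h_{ij}^2\leq 0$. Note that the paper does not prove this lemma itself but only cites \cite[Lemma 3.3]{BaCiDiMa}; your argument is the standard one (decomposing $H$ relative to the splitting $\mathbb{R}^n=\Span\{\omega\}\oplus\omega^{\perp}$) and matches the cited proof in spirit, so nothing further is needed.
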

\section{A global integral inequality}\label{S4_O}
In what follows we denote as $u_i$ the partial derivative of the function $u$ w.r.t. the variable $x_i$, namely $u_i = \partial_{x_i} u$.\\
A fundamental aspect of our approach is grounded in global integral inequalities formulated for smooth functions defined on smooth domains:
\begin{theorem}\label{STIMA}
Let $\Omega$ be a bounded open set in $\mathbb{R}^n$ with $C^2$-boundary and $\alpha \in [0,1)$. Let $\varepsilon \in (0,1)$ and $a_\varepsilon:[0,\infty) \rightarrow [0,\infty)$ be the function defined in \eqref{a_cap}.\\
There exists a constant $\overline c=\overline c(n,s_a,i_a,\alpha,L_\Omega)$ such that, if
    \begin{equation}\label{condKr}
         K_\Omega (r) \leq K(r), \quad \forall{} r \in (0,1),
    \end{equation}
    for some $K$ satisfying
    \begin{equation}\label{condizionesuk}
        \lim_{r\rightarrow 0^+} K (r) <\overline c.
    \end{equation}
 Then, for any smooth function $u \in C^3(\Omega) \cap C^2(\overline \Omega)$ and for some positive $C(n,s_a,i_a,\alpha,L_\Omega,d_\Omega,K_\Omega)$, either
\begin{equation}\label{Dir_estimate}
\begin{split}
&\int_\Omega \frac{a_\varepsilon(|\nabla u|)}{(\varepsilon + |\nabla u|^2)^{\frac{\alpha}{2}}} |D^2u|^2 \ dx\\
&\qquad \leq \mathcal{C} \left (\int_{\Omega} \frac{a_\varepsilon(|\nabla u|) }{(\varepsilon + |\nabla u|^2)^{\frac{\alpha}{2}}} |\nabla u|^2\ dx \right.\\
&\left.\qquad \qquad + \sum_{i=1}^n \int_\Omega \operatorname{div}(a_\varepsilon(|\nabla u|)\nabla u)  \partial_{x_i} \left (\frac{u_i}{(\varepsilon + |\nabla u|^2)^{\frac{\alpha}{2}}}\right)\ dx\right.\\
&\left. \qquad \qquad + \int_\Omega \operatorname{div}(a_\varepsilon(|\nabla u|)\nabla u) \frac{|\nabla u|}{(\varepsilon + |\nabla u|^2)^{\frac{\alpha}{2}}} \ dx\right),
\end{split}
\end{equation}
if $u=0$ on $\partial \Omega$, or
\begin{equation}\label{Neu_estimate_intro}
\begin{split}
&\int_\Omega \frac{a_\varepsilon(|\nabla u|)}{(\varepsilon + |\nabla u|^2)^{\frac{\alpha}{2}}} |D^2u|^2 \ dx\\
&\qquad \leq \mathcal{C} \left (\int_{\Omega} \frac{a_\varepsilon(|\nabla u|) }{(\varepsilon + |\nabla u|^2)^{\frac{\alpha}{2}}} |\nabla u|^2 \ dx \right.\\
&\left.\qquad \qquad + \sum_{i=1}^n \int_\Omega \operatorname{div}(a_\varepsilon(|\nabla u|)\nabla u) \partial_{x_i} \left(\frac{u_i}{(\varepsilon + |\nabla u|^2)^{\frac{\alpha}{2}}}\right)\ dx\right).
\end{split}
\end{equation}
if $\frac{\partial u}{\partial \nu}=0$ on $\partial \Omega$.\\
Moreover, if $\Omega$ is convex, inequalities \eqref{Dir_estimate} and \eqref{Neu_estimate_intro} hold regardless of $K_\Omega$.
\end{theorem}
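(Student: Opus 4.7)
The plan is to derive both inequalities from a Bochner–Reilly type identity, following the scheme of Cianchi–Maz'ya \cite{CiaMa} adapted to the weight $g(t):=(\varepsilon+t^2)^{-\alpha/2}$. Setting $V_i:=a_\varepsilon(|\nabla u|)u_i$, the starting point is the pointwise identity
\[
\sum_{i,j}\partial_j V_i\,\partial_i\bigl(u_j\,g(|\nabla u|)\bigr)-\sum_{i,j}\partial_i V_i\,\partial_j\bigl(u_j\,g(|\nabla u|)\bigr),
\]
which, after expansion and use of Lemma \ref{stima_punt} with $\omega=\nabla u/|\nabla u|$ and $H=D^2 u$, together with the ellipticity bounds on $a_\varepsilon$ from Lemma \ref{lemma_a_eps}, is bounded below pointwise by $c_0\,a_\varepsilon(|\nabla u|)g(|\nabla u|)|D^2u|^2$ up to a term absorbable by $a_\varepsilon(|\nabla u|)g(|\nabla u|)|\nabla u|^2$.

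Integrating over $\Omega$ and applying the divergence theorem twice produces, in the bulk, the two divergence-type integrals on the right of \eqref{Dir_estimate}–\eqref{Neu_estimate_intro}, and leaves a boundary contribution $\mathcal{J}_{\partial\Omega}$. The Gauss–Weingarten decomposition of $\nabla u$ into normal and tangential components, combined with the boundary condition ($u=0$, resp.\ $\partial u/\partial\nu=0$), reduces $\mathcal{J}_{\partial\Omega}$ to a surface integral of a quadratic form $Q(\mathcal{B},\nabla u)$ in $\nabla u$, linear in the second fundamental form, satisfying $|Q|\leq |\mathcal{B}|\,|\nabla u|^2$. In the Neumann case this is the only boundary remainder; in the Dirichlet case one additional boundary integration by parts generates the extra $\int_\Omega\operatorname{div}(a_\varepsilon(|\nabla u|)\nabla u)\,|\nabla u|\,(\varepsilon+|\nabla u|^2)^{-\alpha/2}\,dx$ term appearing on the right of \eqref{Dir_estimate}.

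The main obstacle is controlling the surface integral of $|\mathcal{B}|$. The strategy is to apply Lemma \ref{antoninho} with $\rho=|\mathcal{B}|$ to the function $v:=\bigl(a_\varepsilon(|\nabla u|)g(|\nabla u|)\bigr)^{1/2}|\nabla u|\,\eta$, where $\eta$ is a cutoff associated to a finite covering of $\partial\Omega$ by balls of radius $r\ll 1$. Since $|\nabla v|^2$ is pointwise dominated by $a_\varepsilon(|\nabla u|)g(|\nabla u|)|D^2u|^2$ plus quantities already absorbed in the right-hand side, the trace inequality \eqref{antony} yields
\[
|\mathcal{J}_{\partial\Omega}|\leq C(n,L_\Omega)\,\mathcal{K}_\Omega(r)\int_\Omega a_\varepsilon(|\nabla u|)g(|\nabla u|)|D^2u|^2\,dx+\text{absorbable terms}.
\]
Hypotheses \eqref{condKr}–\eqref{condizionesuk} with $\overline c$ chosen small enough (depending only on $n,s_a,i_a,\alpha,L_\Omega$) make the prefactor strictly less than $1/2$, so the critical integral absorbs into the left-hand side.

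For convex $\Omega$, the $C^2$-boundary has nonnegative semidefinite second fundamental form. In the Neumann case $\nabla u$ is tangential on $\partial\Omega$ and $Q(\mathcal{B},\nabla u)=\mathcal{B}(\nabla u,\nabla u)\geq 0$; in the Dirichlet case $\nabla u=(\partial_\nu u)\nu$, so the relevant form collapses to $(\partial_\nu u)^2\,\mathcal{B}(\nu,\nu)\geq 0$. In both scenarios $\mathcal{J}_{\partial\Omega}\geq 0$ can simply be discarded, yielding the inequalities without any smallness assumption on $K_\Omega$. The hardest technical step is the bookkeeping of boundary terms from the double integration by parts, ensuring every piece either matches the right-hand side of \eqref{Dir_estimate}–\eqref{Neu_estimate_intro} exactly or is absorbed via Lemma \ref{antoninho} combined with \eqref{condizionesuk}.
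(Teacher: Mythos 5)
Your proposal is correct and follows essentially the same route as the paper: your antisymmetrized Reilly-type identity is exactly what the paper obtains by differentiating the operator and integrating by parts twice, the pointwise coercivity comes from Lemma \ref{stima_punt} with the same case analysis on the sign of $t\hat a_\varepsilon'/\hat a_\varepsilon$, the boundary remainder is reduced via the Grisvard identity \eqref{Nonsmooth_dom} to a term controlled by $|\mathcal{B}|\,|\nabla u|^2$ and absorbed through Lemma \ref{antoninho} together with the smallness of $\overline c$, and in the convex case the curvature terms are discarded by sign. The only imprecision is that in the Dirichlet case the boundary quantity is $\operatorname{tr}\mathcal{B}\,(\partial_\nu u)^2$ rather than ``$\mathcal{B}(\nu,\nu)(\partial_\nu u)^2$'' (the second fundamental form acts on tangent vectors), but this does not affect the sign argument.
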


\begin{proof}
Let $u \in C^3(\Omega) \cap C^2(\overline \Omega)$ and $\varphi \in C_c^\infty(\mathbb{R}^n)$. For $i=1,..,n$, computations lead to:
\begin{equation*}
\begin{split}
-\int_\Omega &\partial_{x_i} \operatorname{div}(a_\varepsilon(|\nabla u|)\nabla u) \frac{u_i \varphi^2}{(\varepsilon + |\nabla u|^2)^{\frac{\alpha}{2}}}\ dx\\
&= - \int_{\Omega}\operatorname{div}\left ( \partial_{x_i}(a_\varepsilon(|\nabla u|))\nabla u + a_\varepsilon(|\nabla u|) \nabla u_i \right ) \frac{u_i \varphi^2}{(\varepsilon + |\nabla u|^2)^{\frac{\alpha}{2}}}\ dx\\
&=-\int_\Omega \operatorname{div}\left ( \frac{\hat a'_\varepsilon((\varepsilon+|\nabla u|^{2})^\frac{1}{2})}{(\varepsilon + |\nabla u|^2)^\frac{1}{2}}(\nabla u \cdot \nabla u_i)\nabla u \right ) \frac{u_i \varphi^2}{(\varepsilon + |\nabla u|^2)^{\frac{\alpha}{2}}}\ dx\\
& \qquad - \int_{\Omega}\operatorname{div}\left (a_\varepsilon(|\nabla u|) \nabla u_i \right ) \frac{u_i \varphi^2}{(\varepsilon + |\nabla u|^2)^{\frac{\alpha}{2}}}\ dx\\
\end{split}
\end{equation*}
Since $u \in C^2(\overline \Omega)$, we are allowed to integrate by parts the right hand side of the previous equation, namely:
\begin{equation*}
\begin{split}
-\int_\Omega &\partial_{x_i} \operatorname{div}(a_\varepsilon(|\nabla u|)\nabla u) \frac{u_i \varphi^2}{(\varepsilon + |\nabla u|^2)^{\frac{\alpha}{2}}}\ dx\\
&=\int_\Omega\frac{\hat a'_\varepsilon((\varepsilon+|\nabla u|^{2})^\frac{1}{2})}{(\varepsilon + |\nabla u|^2)^\frac{1}{2}}(\nabla u \cdot \nabla u_i) \left (\nabla u \cdot \nabla \left (\frac{u_i \varphi^2}{(\varepsilon + |\nabla u|^2)^{\frac{\alpha}{2}}}\right ) \right )\ dx\\
&\qquad  - \int_{\partial \Omega} \frac{\hat a'_\varepsilon((\varepsilon+|\nabla u|^{2})^\frac{1}{2})}{(\varepsilon + |\nabla u|^2)^\frac{1}{2}}(\nabla u \cdot \nabla u_i) (\nabla u \cdot \nu) \frac{u_i \varphi^2}{(\varepsilon + |\nabla u|^2)^{\frac{\alpha}{2}}}\ d\mathcal{H}^{n-1}\\
& \qquad + \int_{\Omega} a_\varepsilon(|\nabla u|) \nabla u_i \cdot \nabla \left ( \frac{u_i \varphi^2}{(\varepsilon + |\nabla u|^2)^{\frac{\alpha}{2}}} \right )\ dx\\
& \qquad - \int_{\partial \Omega} a_\varepsilon(|\nabla u|) (\nabla u_i \cdot \nu) \frac{u_i \varphi^2}{(\varepsilon + |\nabla u|^2)^{\frac{\alpha}{2}}}\ d\mathcal{H}^{n-1},
\end{split}
\end{equation*}
where $\nu$ denotes the outward unit normal vector on $\partial \Omega$.\\
In addition, integrating by parts the left hand side of the previous equation, we come out with:
\begin{equation*}
\begin{split}
\int_\Omega&\frac{\hat a'_\varepsilon((\varepsilon+|\nabla u|^{2})^\frac{1}{2})}{(\varepsilon + |\nabla u|^2)^\frac{1}{2}}(\nabla u \cdot \nabla u_i) \left (\nabla u \cdot \nabla \left (\frac{u_i \varphi^2}{(\varepsilon + |\nabla u|^2)^{\frac{\alpha}{2}}}\right ) \right )\ dx\\
& \qquad + \int_{\Omega} a_\varepsilon(|\nabla u|) \nabla u_i \cdot \nabla \left ( \frac{u_i \varphi^2}{(\varepsilon + |\nabla u|^2)^{\frac{\alpha}{2}}} \right )\ dx\\
&= \int_{\partial \Omega} \frac{\hat a'_\varepsilon((\varepsilon+|\nabla u|^{2})^\frac{1}{2})}{(\varepsilon + |\nabla u|^2)^\frac{1}{2}}(\nabla u \cdot \nabla u_i) (\nabla u \cdot \nu) \frac{u_i \varphi^2}{(\varepsilon + |\nabla u|^2)^{\frac{\alpha}{2}}}\ d\mathcal{H}^{n-1}\\
& \qquad + \int_{\partial \Omega} a_\varepsilon(|\nabla u|) (\nabla u_i \cdot \nu) \frac{u_i \varphi^2}{(\varepsilon + |\nabla u|^2)^{\frac{\alpha}{2}}}\ d\mathcal{H}^{n-1}\\
&\qquad +\int_\Omega \operatorname{div}(a_\varepsilon(|\nabla u|)\nabla u)  \partial_{x_i} \left (\frac{u_i \varphi^2}{(\varepsilon + |\nabla u|^2)^{\frac{\alpha}{2}}}\right) \ dx\\
&\qquad - \int_{\partial \Omega} \operatorname{div}(a_\varepsilon(|\nabla u|)\nabla u) u_i \nu_i \frac{\varphi^2}{(\varepsilon + |\nabla u|^2)^{\frac{\alpha}{2}}},
\end{split}
\end{equation*}
where $\nu_i$ is the $i$-th component of $\nu$.\\
In order to proceed, we rewrite the previous equality as follows:
\begin{equation}\label{Equation}
\begin{split}
\int_\Omega&\frac{\hat a'_\varepsilon((\varepsilon+|\nabla u|^{2})^\frac{1}{2})}{(\varepsilon + |\nabla u|^2)^\frac{1+\alpha}{2}}(\nabla u \cdot \nabla u_i)^2 \varphi^2\ dx+ \int_{\Omega} \frac{a_\varepsilon(|\nabla u|)}{(\varepsilon + |\nabla u|^2)^{\frac{\alpha}{2}}} |\nabla u_i|^2  \varphi^2\ dx\\
&\qquad -\alpha \int_\Omega\frac{\hat a'_\varepsilon((\varepsilon+|\nabla u|^{2})^\frac{1}{2})}{(\varepsilon + |\nabla u|^2)^\frac{3+\alpha}{2}}(\nabla u \cdot \nabla u_i)(D^2u\nabla u \cdot \nabla u) u_i \varphi^2 \ dx\\
& \qquad -\alpha \int_{\Omega} \frac{a_\varepsilon(|\nabla u|)}{(\varepsilon + |\nabla u|^2)^{\frac{2+\alpha}{2}}} (D^2u \nabla u \cdot \nabla u_i) u_i \varphi^2\ dx\\
&= \int_{\partial \Omega} \frac{\hat a'_\varepsilon((\varepsilon+|\nabla u|^{2})^\frac{1}{2})}{(\varepsilon + |\nabla u|^2)^\frac{1}{2}}(\nabla u \cdot \nabla u_i) (\nabla u \cdot \nu) \frac{u_i \varphi^2}{(\varepsilon + |\nabla u|^2)^{\frac{\alpha}{2}}}\ d\mathcal{H}^{n-1}\\
& \qquad + \int_{\partial \Omega} a_\varepsilon(|\nabla u|) (\nabla u_i \cdot \nu) \frac{u_i \varphi^2}{(\varepsilon + |\nabla u|^2)^{\frac{\alpha}{2}}}\ d\mathcal{H}^{n-1}\\
&\qquad +\int_\Omega \operatorname{div}(a_\varepsilon(|\nabla u|)\nabla u)  \partial_{x_i} \left (\frac{u_i \varphi^2}{(\varepsilon + |\nabla u|^2)^{\frac{\alpha}{2}}}\right) \ dx\\
&\qquad - \int_{\partial \Omega} \operatorname{div}(a_\varepsilon(|\nabla u|)\nabla u) u_i \nu_i \frac{\varphi^2}{(\varepsilon + |\nabla u|^2)^{\frac{\alpha}{2}}} \ d\mathcal{H}^{n-1}\\
&\qquad -2 \int_\Omega \frac{\hat a'_\varepsilon((\varepsilon+|\nabla u|^{2})^\frac{1}{2})}{(\varepsilon + |\nabla u|^2)^\frac{1+\alpha}{2}}(\nabla u \cdot \nabla u_i)(\nabla u \cdot \nabla \varphi)u_i \varphi \ dx\\
&\qquad -2 \int_\Omega \frac{a_\varepsilon(|\nabla u|)}{(\varepsilon + |\nabla u|^2)^{\frac{\alpha}{2}}} (\nabla u_i \cdot \nabla \varphi) u_i  \varphi\ dx=: J_1 + \dots + J_6.\\
\end{split}
\end{equation}
Let us focus on the left-hand side of \eqref{Equation}. First of all we notice that:
\begin{equation}\label{equalities}
\sum_{i=1}^n (D^2u\nabla u \cdot \nabla u_i)u_i = \sum_{i=1}^n (\nabla u_i \cdot \nabla u)^2, \quad \sum_{i=1}^n (\nabla u \cdot \nabla u_i)u_i =(D^2u\nabla u \cdot \nabla u).
\end{equation}
Thus, summing over $i=1,...,n$ and using the previous equalities, the left hand side of \eqref{Equation} becomes:
\begin{equation}\label{comacose}
\begin{split}
\int_{\Omega}& \frac{a_\varepsilon(|\nabla u|)}{(\varepsilon + |\nabla u|^2)^{\frac{\alpha}{2}}} |D^2u|^2  \varphi^2\ dx\\
&+ \int_{\Omega} \frac{a_\varepsilon(|\nabla u|)}{(\varepsilon + |\nabla u|^2)^{\frac{2+\alpha}{2}}}\left [\frac{\hat a'_\varepsilon((\varepsilon+|\nabla u|^{2})^\frac{1}{2}) (\varepsilon + |\nabla u|^2)^\frac{1}{2}}{a_\varepsilon(|\nabla u|)} - \alpha \right] \sum_{i=1}^n (\nabla u \cdot \nabla u_i)^2 \varphi^2\ dx\\
&- \alpha \int_{\Omega} \frac{\hat a'_\varepsilon((\varepsilon+|\nabla u|^{2})^\frac{1}{2}) (\varepsilon + |\nabla u|^2)^\frac{1}{2}}{a_\varepsilon(|\nabla u|)} a_\varepsilon(|\nabla u|)(D^2u\nabla u \cdot \nabla u)^2 \frac{\varphi^2}{(\varepsilon + |\nabla u|^2)^\frac{4+\alpha}{2}} \ dx.
\end{split}
\end{equation}
For a matter of notation, we define:
\begin{equation}\label{theta_eps}
\theta_\varepsilon(x):= \frac{\hat a'_\varepsilon((\varepsilon+|\nabla u|^{2})^\frac{1}{2}) (\varepsilon + |\nabla u|^2)^\frac{1}{2}}{a_\varepsilon(|\nabla u|)},
\end{equation}
\begin{equation*}
\omega:=\frac{\nabla u}{|\nabla u|},
\end{equation*}
\begin{equation*}
H:=D^2u,
\end{equation*}
and rewrite the left hand-side of \eqref{comacose} as follows:
\begin{equation*}
\int_{\Omega} \frac{a_\varepsilon(|\nabla u|)}{(\varepsilon + |\nabla u|^2)^{\frac{\alpha}{2}}} \ \mathcal{I} \varphi^2\ dx,
\end{equation*}
where,
\begin{equation*}
\begin{split}
\mathcal{I}:&=|D^2u|^2 + ( \theta_\varepsilon(x) - \alpha) \frac{|\nabla u|^2}{(\varepsilon + |\nabla u|^2)} \sum_{i=1}^n \left(\nabla u_i \cdot \frac{\nabla u}{|\nabla u|}\right )^2\\
&\quad \qquad  - \alpha \theta_\varepsilon(x)  \frac{|\nabla u|^4}{(\varepsilon + |\nabla u|^2)^2}\left(D^2u \frac{\nabla u}{|\nabla u|}\cdot \frac{\nabla u}{|\nabla u|}\right)^2\\
&=|H|^2 +( \theta_\varepsilon(x) - \alpha) \frac{|\nabla u|^2}{(\varepsilon + |\nabla u|^2)} \ |H\omega|^2  - \alpha \theta_\varepsilon(x)  \frac{|\nabla u|^4}{(\varepsilon + |\nabla u|^2)^2}\  |H\omega \cdot \omega|^2.
\end{split}
\end{equation*}
Our aim is to prove that there exists a positive constant $C=C(\alpha,i_a)$, such that:
\begin{equation*}
\mathcal{I} \geq C |H|^2.
\end{equation*}
By \eqref{theta_eps}, we recall  $-1<i_a\leq \theta_\varepsilon (x)\leq s_a$. Let us distinguish two cases:\\
\textbf{i) } Let $x \in \Omega$ such that $\theta_\varepsilon(x)\geq 0$, then:
\begin{equation*}
\mathcal{I} \geq |H|^2+ \left[(\theta_\varepsilon(x) - \alpha) \frac{|\nabla u|^2}{(\varepsilon + |\nabla u|^2)} - \alpha \theta_\varepsilon(x)  \frac{|\nabla u|^4}{(\varepsilon + |\nabla u|^2)^2}\right] |H\omega|^2.
\end{equation*}
where we used the fact that $|H\omega \cdot \omega|^2 \leq |H\omega|^2$. For simplicity we define:
\begin{equation*}
\beta_\varepsilon(x):=(\theta_\varepsilon(x) - \alpha) \frac{|\nabla u|^2}{(\varepsilon + |\nabla u|^2)} - \alpha \theta_\varepsilon(x)  \frac{|\nabla u|^4}{(\varepsilon + |\nabla u|^2)^2}.
\end{equation*}
Therefore, if $\beta_\varepsilon(x) \geq 0$, then, obviously, $\mathcal I \geq |H|^2$; on the other hand if $\beta_\varepsilon(x) <0$, since $|H\omega|^2\leq|H|^2$,

\begin{equation*}
\begin{split}
\mathcal{I} &\geq \left[ 1+ (\theta_\varepsilon(x) - \alpha) \frac{|\nabla u|^2}{(\varepsilon + |\nabla u|^2)} - \alpha \theta_\varepsilon(x)  \frac{|\nabla u|^4}{(\varepsilon + |\nabla u|^2)^2}\right]  |H|^2\\
& = \left ( 1 +\theta_\varepsilon(x) \frac{|\nabla u|^2}{(\varepsilon + |\nabla u|^2)} \right) \left( 1 - \alpha \frac{|\nabla u|^2}{(\varepsilon + |\nabla u|^2)}\right)|H|^2\\
&\geq C |H|^2,
\end{split}
\end{equation*}
where we used that $\alpha \in [0,1)$ and $\theta_\varepsilon(x) > -1$.\\
\textbf{ii) } Let $x \in \Omega$ such that $\theta_\varepsilon(x) < 0$, by Lemma \ref{stima_punt}, it follows that:
\begin{equation*}
\begin{split}
\mathcal{I} \geq &\left( 1+ \alpha \theta_\varepsilon(x) \frac{|\nabla u|^4}{(\varepsilon + |\nabla u|^2)^2} \right)|H|^2\\
& \qquad + \left( (\theta_\varepsilon(x) - \alpha) \frac{|\nabla u|^2}{(\varepsilon + |\nabla u|^2)} -2\alpha \theta_\varepsilon(x) \frac{|\nabla u|^4}{(\varepsilon + |\nabla u|^2)^2} \right) |H\omega|^2.
\end{split}
\end{equation*}
We define:
\begin{equation*}
\delta_\varepsilon(x):=(\theta_\varepsilon(x) - \alpha) \frac{|\nabla u|^2}{(\varepsilon + |\nabla u|^2)} -2\alpha \theta_\varepsilon(x) \frac{|\nabla u|^4}{(\varepsilon + |\nabla u|^2)^2}.
\end{equation*}
Thus, if $\delta_\varepsilon(x) \geq 0$,

\begin{equation*}
\mathcal{I} \geq \left( 1+ \alpha \theta_\varepsilon(x) \frac{|\nabla u|^4}{(\varepsilon + |\nabla u|^2)^2} \right)|H|^2 \geq C |H|^2,
\end{equation*}
since, $1+ \alpha\theta_\varepsilon(x) > 0$.\\
On the other hand, if $\delta_\varepsilon(x) < 0$, since $|H\omega|^2 \leq |H|^2$, we have:
\begin{equation*}
\begin{split}
\mathcal{I}&\geq \left (1+ \alpha \theta_\varepsilon(x) \frac{|\nabla u|^4}{(\varepsilon + |\nabla u|^2)^2}+ \delta_\varepsilon(x)  \right) |H|^2\\
&=\left ( 1 +\theta_\varepsilon(x) \frac{|\nabla u|^2}{(\varepsilon + |\nabla u|^2)} \right) \left( 1 - \alpha \frac{|\nabla u|^2}{(\varepsilon + |\nabla u|^2)}\right)|H|^2\\
&\geq C|H|^2.
\end{split}
\end{equation*}
Summing up, by the previous estimates, we deduce that
\begin{equation}\label{LHS_T}
\int_{\Omega} \frac{a_\varepsilon(|\nabla u|)}{(\varepsilon + |\nabla u|^2)^{\frac{\alpha}{2}}} \ \mathcal{I} \varphi^2\ dx \geq C(\alpha,i_a) \int_\Omega \frac{a_\varepsilon(|\nabla u|)}{(\varepsilon + |\nabla u|^2)^{\frac{\alpha}{2}}} |D^2u|^2 \varphi^2 \ dx. 
\end{equation}
Regarding the right-hand side of \eqref{Equation}, we proceed in the following way:
\begin{equation*}
\begin{split}
\left|\sum_{i=1}^n J_5\right|& \leq 2C(n) \int_\Omega \left |\frac{\hat a'_\varepsilon((\varepsilon+|\nabla u|^{2})^\frac{1}{2})(\varepsilon + |\nabla u|^2)^\frac{1}{2}}{a_\varepsilon(|\nabla u|)}\right |\frac{a_\varepsilon(|\nabla u|)}{(\varepsilon + |\nabla u|^2)^\frac{\alpha}{2}}\frac{|\nabla u|^2}{(\varepsilon + |\nabla u|^2)}|D^2u||\nabla u||\varphi||\nabla \varphi| \ dx\\
&\leq 2C(n)\max\{|s_a|,|i_a|\}\int_\Omega \frac{a_\varepsilon(|\nabla u|)}{(\varepsilon + |\nabla u|^2)^\frac{\alpha}{2}}|D^2u||\nabla u||\varphi||\nabla \varphi| \ dx\\
&\leq \delta \int_\Omega \frac{a_\varepsilon(|\nabla u|)}{(\varepsilon + |\nabla u|^2)^\frac{\alpha}{2}}|D^2u|^2 \varphi^2 \ dx + \frac{\max\{|s_a|,|i_a|\}^2C(n)^2}{\delta}\int_\Omega \frac{a_\varepsilon(|\nabla u|)}{(\varepsilon + |\nabla u|^2)^\frac{\alpha}{2}}|\nabla u|^2 |\nabla \varphi|^2 \ dx,
\end{split}
\end{equation*}
where, in the last inequality, we used the weighted Young inequality, and $C(n)$ is a positive constant.\\
We can estimate, as before, the term $J_6$:
\begin{equation*}
\begin{split}
\left|\sum_{i=1}^n J_6\right|& \leq 2C(n) \int_\Omega \frac{a_\varepsilon(|\nabla u|)}{(\varepsilon + |\nabla u|^2)^\frac{\alpha}{2}}|D^2u||\nabla u||\varphi||\nabla \varphi| \ dx\\
&\leq \delta \int_\Omega \frac{a_\varepsilon(|\nabla u|)}{(\varepsilon + |\nabla u|^2)^\frac{\alpha}{2}}|D^2u|^2 \varphi^2 \ dx + \frac{C(n)^2}{\delta}\int_\Omega \frac{a_\varepsilon(|\nabla u|)}{(\varepsilon + |\nabla u|^2)^\frac{\alpha}{2}}|\nabla u|^2 |\nabla \varphi|^2 \ dx.
\end{split}
\end{equation*}
Therefore, we get:
\begin{equation}\label{RHS_T}
\begin{split}
\left|\sum_{i=1}^n J_5\right| + \left|\sum_{i=1}^n J_6\right| \leq 2 \delta &\int_\Omega \frac{a_\varepsilon(|\nabla u|)}{(\varepsilon + |\nabla u|^2)^\frac{\alpha}{2}}|D^2u|^2 \varphi^2 \ dx\\
& + \frac{C(n)^2(\max\{|s_a|,|i_a|\}^2+1)}{\delta}\int_\Omega \frac{a_\varepsilon(|\nabla u|)}{(\varepsilon + |\nabla u|^2)^\frac{\alpha}{2}}|\nabla u|^2 |\nabla \varphi|^2 \ dx.
\end{split}
\end{equation}
The remaining part consists in estimating the boundary terms appearing in \eqref{Equation}, $J_1$, $J_2$ and $J_4$. For a matter of simplicity, let us treat separately the Dirichlet, the Neumann and the convex cases.

\vspace{0.3cm}

\textbf{Dirichlet case.} Since $u=0$ on $\partial \Omega$, then we can write explicitly the outward unit normal vector on the boundary as:
\begin{equation}\label{Nu_Dir}
\nu = \frac{\nabla u}{|\nabla u|}, \quad \text{if} \ \nabla u \neq 0,
\end{equation}
otherwise if $\nabla u = 0$, the corresponding boundary term will be identically equal to zero.\\
First of all we rewrite the term $J_4$ in a more useful way, namely:
\begin{equation}\label{J_4}
\begin{split}
 \sum_{i=1}^n J_4 &=- \int_{\partial \Omega} \operatorname{div}(a_\varepsilon(|\nabla u|)\nabla u) \sum_{i=1}^n u_i \nu_i \frac{\varphi^2}{(\varepsilon + |\nabla u|^2)^{\frac{\alpha}{2}}}\ d\mathcal{H}^{n-1}\\
&=-\int_{\partial \Omega} \frac{a_\varepsilon(|\nabla u|)}{(\varepsilon - |\nabla u|^2)^{\frac{\alpha}{2}}} \Delta u |\nabla u| \varphi^2\ d\mathcal{H}^{n-1}\\
&\quad  \qquad- \int_{\partial \Omega} \frac{\hat a'_\varepsilon((\varepsilon+|\nabla u|^{2})^\frac{1}{2})}{(\varepsilon + |\nabla u|^2)^\frac{1+\alpha}{2}}(D^2u\nabla u \cdot \nabla u)|\nabla u| \varphi^2 \ d\mathcal{H}^{n-1}.
\end{split}
\end{equation}
By \eqref{equalities}, we deduce:
\begin{equation}\label{J_1}
\begin{split}
\sum_{i=1}^n J_1 &= \int_{\partial \Omega} \frac{\hat a'_\varepsilon((\varepsilon+|\nabla u|^{2})^\frac{1}{2})}{(\varepsilon + |\nabla u|^2)^\frac{1+\alpha}{2}}(D^2u\nabla u \cdot \nabla u) (\nabla u \cdot \nu) \varphi^2 \ d\mathcal{H}^{n-1}\\
&=\int_{\partial \Omega} \frac{\hat a'_\varepsilon((\varepsilon+|\nabla u|^{2})^\frac{1}{2})}{(\varepsilon + |\nabla u|^2)^\frac{1+\alpha}{2}}(D^2u\nabla u \cdot \nabla u)|\nabla u| \varphi^2 \ d\mathcal{H}^{n-1}.
\end{split}
\end{equation}
To evaluate the boundary term $J_2$, by \cite[Equation (3,1,1,8)]{Gr}, we recall that:
\begin{equation}\label{Nonsmooth_dom}
\begin{split}
    \Delta u \frac{\partial u}{\partial {\nu}} - &\sum_{i,j=1}^n u_{ij} u_i \nu_j\\
    &= \operatorname{div}_T \left ( \frac{\partial u}{\partial  \nu} \nabla_T u\right ) - \operatorname{tr}\mathcal B \left (\frac{\partial u}{\partial  \nu} \right)^2 \\
    &\qquad- \mathcal{B}(\nabla_Tu,\nabla_Tu) - 2 \nabla_Tu \cdot \nabla_T\frac{\partial u}{\partial  \nu} \quad \text{on } \partial \Omega,
\end{split}
\end{equation}
where $\mathcal{B}$ stands for the second fundamental form associated to the boundary, $\operatorname{tr}\mathcal B$ for its trace, $\operatorname{div}_T$ and $\nabla_T$ denote the divergence and the gradient operator on $\partial\Omega$.\\
The Dirichlet homogeneous boundary condition, implies that $\nabla_T u =0$ on $\partial \Omega$, thus, by \eqref{Nonsmooth_dom}, $J_2$ becomes:

\begin{equation}\label{J_2}
\begin{split}
\sum_{i=1}^n J_2 &= \int_{\partial \Omega} \frac{a_\varepsilon(|\nabla u|) }{(\varepsilon + |\nabla u|^2)^{\frac{\alpha}{2}}}\sum_{i=1}^n \sum_{j=1}^n u_{ij} u_i \nu_j \varphi^2\ d\mathcal{H}^{n-1}\\
&= \int_{\partial \Omega} \frac{a_\varepsilon(|\nabla u|) }{(\varepsilon + |\nabla u|^2)^{\frac{\alpha}{2}}}\Delta u |\nabla u| \varphi^2\ d\mathcal{H}^{n-1}\\
&\quad  \qquad + \int_{\partial \Omega} \frac{a_\varepsilon(|\nabla u|) }{(\varepsilon + |\nabla u|^2)^{\frac{\alpha}{2}}}\operatorname{tr} \mathcal{B} |\nabla u|^2 \varphi^2\ d\mathcal{H}^{n-1}.
\end{split}
\end{equation} 
Finally, by \eqref{J_1}, \eqref{J_2} and \eqref{J_4}, we get the resulting boundary term:
\begin{equation*}
\begin{split}
\sum_{i=1}^n (J_1 + J_2 + J_4) &= \int_{\partial \Omega} \frac{a_\varepsilon(|\nabla u|) }{(\varepsilon + |\nabla u|^2)^{\frac{\alpha}{2}}}\operatorname{tr} \mathcal{B} |\nabla u|^2 \varphi^2\ d\mathcal{H}^{n-1}\\
&\leq C(n) \int_{\partial \Omega} \frac{a_\varepsilon(|\nabla u|) }{(\varepsilon + |\nabla u|^2)^{\frac{\alpha}{2}}}|\mathcal{B}| |\nabla u|^2 \varphi^2\ d\mathcal{H}^{n-1},
\end{split}
\end{equation*}
where $C(n)$ is a positive constant. Now we assume that $\varphi \in C^{\infty}_c(B_r(x_0))$ non negative, for some $x_0\in \partial \Omega$ and $r\in (0,R_{\Omega})$. By \eqref{antony}, with $\rho =|\mathcal{B}|$ and $v=\varphi\frac{a_\varepsilon(|\nabla u|)^\frac{1}{2}}{(\varepsilon+|\nabla u|^2)^{\frac{\alpha}{4}}} u_i \in W_0^{1,2}(B_r(x_0))$, since $$|\nabla v|\leq C(\alpha,i_a,s_a)\varphi\frac{a_\varepsilon(|\nabla u|)^\frac{1}{2}}{(\varepsilon+|\nabla u|^2)^{\frac{\alpha}{4}}} |\nabla u_i|+ \frac{a_\varepsilon(|\nabla u|)^\frac{1}{2}}{(\varepsilon+|\nabla u|^2)^{\frac{\alpha}{4}}} |u_i||\nabla \varphi|,$$ for $i=1,...,n$, with $C(\alpha,i_a,s_a)$ positive constant, we deduce that:
\begin{equation}\label{boundary_T}
\begin{split}
\int_{\partial \Omega \cap B_r(x_0)} &\frac{a_\varepsilon(|\nabla u|) }{(\varepsilon + |\nabla u|^2)^{\frac{\alpha}{2}}}|\mathcal{B}| |\nabla u|^2 \varphi^2\ d\mathcal{H}^{n-1}\\
&\leq C(n,s_a,i_a,\alpha,L_\Omega)K_\Omega(r) \left ( \int_{\Omega \cap B_r (x_0)} \frac{a_\varepsilon(|\nabla u|) }{(\varepsilon + |\nabla u|^2)^{\frac{\alpha}{2}}} |D^2u|^2 \varphi^2\ dx\right.\\
&\left. \qquad \qquad \qquad+ \int_{\Omega \cap B_r (x_0)} \frac{a_\varepsilon(|\nabla u|) }{(\varepsilon + |\nabla u|^2)^{\frac{\alpha}{2}}} |\nabla u|^2 |\nabla \varphi|^2\ dx \right),
\end{split}
\end{equation}
for some positive constant $C(n,s_a,i_a,\alpha,L_\Omega)$.\\
By \eqref{Equation}, \eqref{LHS_T}, \eqref{RHS_T} and \eqref{boundary_T}, we obtain:

\begin{equation*}
\begin{split}
&\left (C(\alpha,i_a) - 2\delta - C(n,s_a,i_a,\alpha,L_\Omega)K_\Omega(r) \right)\int_\Omega \frac{a_\varepsilon(|\nabla u|)}{(\varepsilon + |\nabla u|^2)^{\frac{\alpha}{2}}} |D^2u|^2 \varphi^2 \ dx\\
&\qquad \leq \left ( \frac{C(n,s_a,i_a)}{\delta} + C(n,s_a,i_a,\alpha,L_\Omega)K_\Omega(r) \right ) \int_{\Omega} \frac{a_\varepsilon(|\nabla u|) }{(\varepsilon + |\nabla u|^2)^{\frac{\alpha}{2}}} |\nabla u|^2 |\nabla \varphi|^2 \ dx\\
&\qquad \qquad + \sum_{i=1}^n \int_\Omega \operatorname{div}(a_\varepsilon(|\nabla u|)\nabla u)  \partial_{x_i} \left (\frac{u_i \varphi^2}{(\varepsilon + |\nabla u|^2)^{\frac{\alpha}{2}}}\right) \ dx\\
&\qquad = \left ( \frac{C(n,s_a,i_a)}{\delta} + C(n,s_a,i_a,\alpha,L_\Omega)K_\Omega(r) \right ) \int_{\Omega} \frac{a_\varepsilon(|\nabla u|) }{(\varepsilon + |\nabla u|^2)^{\frac{\alpha}{2}}} |\nabla u|^2 |\nabla \varphi|^2 \ dx\\
&\qquad \qquad + \sum_{i=1}^n \int_\Omega \operatorname{div}(a_\varepsilon(|\nabla u|)\nabla u)  \partial_{x_i} \left (\frac{u_i}{(\varepsilon + |\nabla u|^2)^{\frac{\alpha}{2}}}\right)\varphi^2 \ dx\\
&\qquad \qquad + 2\int_\Omega \operatorname{div}(a_\varepsilon(|\nabla u|)\nabla u) \frac{(\nabla u \cdot \nabla \varphi)}{(\varepsilon + |\nabla u|^2)^{\frac{\alpha}{2}}}\varphi \ dx\\
&\qquad \leq \left ( \frac{C(n,s_a,i_a)}{\delta} + C(n,s_a,i_a,\alpha,L_\Omega)K_\Omega(r) \right ) \int_{\Omega} \frac{a_\varepsilon(|\nabla u|) }{(\varepsilon + |\nabla u|^2)^{\frac{\alpha}{2}}} |\nabla u|^2 |\nabla \varphi|^2 \ dx\\
&\qquad \qquad + \sum_{i=1}^n \int_\Omega \operatorname{div}(a_\varepsilon(|\nabla u|)\nabla u)  \partial_{x_i} \left (\frac{u_i}{(\varepsilon + |\nabla u|^2)^{\frac{\alpha}{2}}}\right)\varphi^2 \ dx\\
&\qquad \qquad + \int_\Omega \operatorname{div}(a_\varepsilon(|\nabla u|)\nabla u) \frac{|\nabla u|}{(\varepsilon + |\nabla u|^2)^{\frac{\alpha}{2}}}\varphi^2 \ dx\\
&\qquad \qquad + \int_\Omega \operatorname{div}(a_\varepsilon(|\nabla u|)\nabla u) \frac{|\nabla u|}{(\varepsilon + |\nabla u|^2)^{\frac{\alpha}{2}}}|\nabla \varphi|^2 \ dx,
\end{split}
\end{equation*}
where in the last inequality we have used a standard Young's inequality.
Moreover, if condition \eqref{condizionesuk} is satisfied with $\overline c=C(\alpha,i_a)/C(n,s_a,i_a,\alpha,L_\Omega)$, then, for $\delta>0$ sufficiently small, there exists a constant $C>0$, depending on $\Omega$ only through $L_\Omega$ and $r'\in (0,R_\Omega)$ depending also on $K$, such that:
\begin{equation*}
\begin{split}
    C(\alpha,i_a)& - 2\delta - C(n,s_a,i_a,\alpha,L_\Omega)K_\Omega(r) \\
    &\geq C(\alpha,i_a) - 2\delta - C(n,s_a,i_a,\alpha,L_\Omega)K(r)\geq C,
\end{split}
\end{equation*}
with $r \in (0,r']$.\\
Consequently, we get the following estimate:
\begin{equation}\label{MAIN_e}
\begin{split}
&\int_\Omega \frac{a_\varepsilon(|\nabla u|)}{(\varepsilon + |\nabla u|^2)^{\frac{\alpha}{2}}} |D^2u|^2 \varphi^2 \ dx\\
&\qquad \leq \mathcal{C} \left (\int_{\Omega} \frac{a_\varepsilon(|\nabla u|) }{(\varepsilon + |\nabla u|^2)^{\frac{\alpha}{2}}} |\nabla u|^2 |\nabla \varphi|^2 \ dx \right.\\
&\left.\qquad \qquad + \sum_{i=1}^n \int_\Omega \operatorname{div}(a_\varepsilon(|\nabla u|)\nabla u)  \partial_{x_i} \left (\frac{u_i}{(\varepsilon + |\nabla u|^2)^{\frac{\alpha}{2}}}\right)\varphi^2 \ dx\right.\\
&\left.\qquad \qquad + \int_\Omega \operatorname{div}(a_\varepsilon(|\nabla u|)\nabla u) \frac{|\nabla u|}{(\varepsilon + |\nabla u|^2)^{\frac{\alpha}{2}}}\varphi^2 \ dx \right.\\
&\left. \qquad \qquad + \int_\Omega \operatorname{div}(a_\varepsilon(|\nabla u|)\nabla u) \frac{|\nabla u|}{(\varepsilon + |\nabla u|^2)^{\frac{\alpha}{2}}}|\nabla \varphi|^2 \ dx\right),
\end{split}
\end{equation}
for some positive constant $\mathcal{C}(n,s_a,i_a,\alpha,L_\Omega)$.\\
In the case when $x_0\in \Omega$ and $B_r(x_0)\subset\Omega$ inequality \eqref{MAIN_e} holds. Indeed, the boundary term
in \eqref{boundary_T} vanishes. Moreover the constant $\mathcal{C}$ is independent of $L_\Omega$.

Otherwise, there exists $r'' \in (0,r')$, hence depending on $L_\Omega$, $d_\Omega$ (the diameter of $\Omega$), and $K$ such that $\overline \Omega$ admits a finite covering $\{B_{r_k}\}$, with $r''\leq r_k \leq r'$, and a family of functions $\{\varphi_{r_k}\}$
such that $\varphi_{r_k} \in C_c^{\infty}(B_{r_k})$ and $\{\varphi^2_{r_k}\}$ is a partition of unity of $\overline \Omega$ associated with the covering $\{B_{r_k}\}$. Thus, $\sum_k \varphi^2_{r_k}=1$ in $\overline \Omega$. Moreover the functions $\varphi_{r_k}$ can also chosen so that $|\nabla \varphi_{r_k}|\leq C/r_k\leq C / r''$. Using \eqref{MAIN_e} with $\varphi=\varphi_{r_k}$, we obtain:
\begin{equation}\label{FINE_D}
\begin{split}
&\int_\Omega \frac{a_\varepsilon(|\nabla u|)}{(\varepsilon + |\nabla u|^2)^{\frac{\alpha}{2}}} |D^2u|^2 \ dx\\
&\qquad \leq \mathcal{C} \left (\int_{\Omega} \frac{a_\varepsilon(|\nabla u|) }{(\varepsilon + |\nabla u|^2)^{\frac{\alpha}{2}}} |\nabla u|^2\ dx \right.\\
&\left.\qquad \qquad + \sum_{i=1}^n \int_\Omega \operatorname{div}(a_\varepsilon(|\nabla u|)\nabla u)  \partial_{x_i} \left (\frac{u_i}{(\varepsilon + |\nabla u|^2)^{\frac{\alpha}{2}}}\right)\ dx\right.\\
&\left. \qquad \qquad + \int_\Omega \operatorname{div}(a_\varepsilon(|\nabla u|)\nabla u) \frac{|\nabla u|}{(\varepsilon + |\nabla u|^2)^{\frac{\alpha}{2}}} \ dx\right),
\end{split}
\end{equation}
where $\mathcal{C}=\mathcal{C}(n,s_a,i_a,\alpha,L_\Omega,d_\Omega,K)$ is a positive constant.

\vspace{0.3cm}

\textbf{Neumann case.} The approach remains the same for the case with homogeneous Neumann boundary conditions. Below, we highlight the key modifications in the proof. First of all we notice that, since $\frac{\partial u}{\partial \nu}=0$, then the boundary terms, $J_1$ and $J_4$, vanish identically. We are left to deal with the following term:
\begin{equation*}
\begin{split}
\sum_{i=1}^n J_2 &= \int_{\partial \Omega} \frac{a_\varepsilon(|\nabla u|) }{(\varepsilon + |\nabla u|^2)^{\frac{\alpha}{2}}}\sum_{i=1}^n \sum_{j=1}^n u_{ij} u_i \nu_j \varphi^2\ d\mathcal{H}^{n-1}\\
&= \int_{\partial \Omega} \frac{a_\varepsilon(|\nabla u|) }{(\varepsilon + |\nabla u|^2)^{\frac{\alpha}{2}}}\mathcal{B}(\nabla_Tu,\nabla_Tu) \varphi^2\ d\mathcal{H}^{n-1}\\
&\leq C(n)  \int_{\partial \Omega} \frac{a_\varepsilon(|\nabla u|) }{(\varepsilon + |\nabla u|^2)^{\frac{\alpha}{2}}}|\mathcal{B}||\nabla u|^2\varphi^2\ d\mathcal{H}^{n-1},
\end{split}
\end{equation*}
where we used equality \eqref{Nonsmooth_dom}.\\
These variations allow the procedure used in the Dirichlet case to yield:
\begin{equation}\label{Neu_quasi}
\begin{split}
&\int_\Omega \frac{a_\varepsilon(|\nabla u|)}{(\varepsilon + |\nabla u|^2)^{\frac{\alpha}{2}}} |D^2u|^2\varphi^2 \ dx\\
&\qquad \leq \mathcal{C} \left (\int_{\Omega} \frac{a_\varepsilon(|\nabla u|) }{(\varepsilon + |\nabla u|^2)^{\frac{\alpha}{2}}} |\nabla u|^2|\nabla \varphi|^2\ dx \right.\\
&\left.\qquad \qquad + \sum_{i=1}^n \int_\Omega \operatorname{div}(a_\varepsilon(|\nabla u|)\nabla u)  \partial_{x_i} \left (\frac{u_i \varphi^2}{(\varepsilon + |\nabla u|^2)^{\frac{\alpha}{2}}}\right)\ dx\right),
\end{split}
\end{equation}
where $\mathcal{C}=\mathcal{C}(n,s_a,i_a,\alpha,L_\Omega,d_\Omega,K)$ is a positive constant.\\
Integrating by parts with respect to the variable $x_i$ the last term of \eqref{Neu_quasi} and using the Neumann homogeneous boundary condition, we obtain:
\begin{equation}\label{FINE_N}
\begin{split}
&\int_\Omega \frac{a_\varepsilon(|\nabla u|)}{(\varepsilon + |\nabla u|^2)^{\frac{\alpha}{2}}} |D^2u|^2 \varphi^2 \ dx\\
&\qquad \leq \mathcal{C} \left (\int_{\Omega} \frac{a_\varepsilon(|\nabla u|) }{(\varepsilon + |\nabla u|^2)^{\frac{\alpha}{2}}} |\nabla u|^2 |\nabla \varphi|^2\ dx \right.\\
&\left.\qquad \qquad - \sum_{i=1}^n \int_\Omega \partial_{x_i} \operatorname{div}(a_\varepsilon(|\nabla u|)\nabla u) \left (\frac{u_i}{(\varepsilon + |\nabla u|^2)^{\frac{\alpha}{2}}}\right) \varphi^2\ dx\right).
\end{split}
\end{equation}
In conclusion, the partition of unity and a further integration by parts, with respect to the variable $x_i$, yield the desired result.

\vspace{0.3cm}

\textbf{Convex case.} If $\Omega$ is convex, then, due to the sign of the boundary curvatures, namely:
\begin{equation*}
\operatorname{tr}\mathcal{B} \leq 0, \quad \mathcal{B}(\nabla_{T} u,\nabla_{T} u) \leq 0 \quad \text{on } \partial{\Omega},
\end{equation*}
we can disregard the boundary integrals appearing both in the Dirichlet and in the Neumann estimates. Therefore, via the same procedure, both inequalities \eqref{FINE_D} and \eqref{FINE_N} hold, with a positive constant independent on $K_\Omega$.
\end{proof}
\vspace{0.3cm}
To proceed, we recall the following technical lemma (see \cite[Lemma 5.2]{Cma} or \cite{Ant}):
\vspace{0.1cm}

\begin{lemma}\label{approxcap}
Let $\Omega$ be a bounded Lipschitz domain in $\R^n$, $n \geq 2$ such that $\partial \Omega \in W^{2,1}$. Assume that the function $\mathcal K_\Omega (r)$, defined as in \eqref{KB}, is finite-valued for $r\in (0,1)$.
Then there exist positive constants $r_0$ and $C$ and a sequence of bounded open sets $\{\Omega_m\}$, 
such that $\partial \Omega _m \in C^\infty$, $\Omega \subset \Omega _m$, $\lim _{m \to \infty}|\Omega _m \setminus \Omega| = 0$, the Hausdorff distance between $\Omega _m$ and $\Omega$ tends to $0$ as $m \to \infty$,
\begin{equation}\label{prop1}
L_{\Omega _m} \leq C L_\Omega \,, \quad d_{\Omega _m} \leq C d_\Omega
\end{equation}
and
\begin{equation}\label{prop2}
\mathcal K_{\Omega_m}(r) \leq C \mathcal K_{\Omega} (r)
\end{equation}
for $r\in (0, r_0)$ and $m \in \mathbb N$.\\

\end{lemma}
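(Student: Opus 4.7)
The plan is to construct $\Omega_m$ by locally mollifying the Lipschitz graphs that describe $\partial\Omega$, shifting slightly outward so that $\Omega\subset\Omega_m$, and then gluing via a smooth partition of unity. Concretely, cover $\partial\Omega$ by finitely many balls $B_j$ of radius $R\leq R_\Omega/2$ in which, after a rigid motion, $\Omega$ is the set $\{x_n>\psi_j(x')\}$ for an $L_\Omega$-Lipschitz function $\psi_j\in W^{2,1}(B'_R)$. Fix a standard radial mollifier $\rho_{1/m}$ supported in the ball of radius $1/m$, set $\psi_{j,m}=\psi_j*\rho_{1/m}-c_m$ with $c_m=\|\psi_j*\rho_{1/m}-\psi_j\|_{L^\infty}+1/m\to 0$, and glue the resulting smooth graphs by means of a subordinate partition of unity to obtain a bounded open set $\Omega_m$ with $\partial\Omega_m\in C^\infty$ and $\Omega\subset\Omega_m$.

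The three geometric conclusions are then routine: the inequality $\|\nabla(\psi_j*\rho_{1/m})\|_\infty\leq\|\nabla\psi_j\|_\infty$ yields $L_{\Omega_m}\leq CL_\Omega$ uniformly in $m$, while the uniform convergence $\psi_{j,m}\to\psi_j$ gives $d_{\Omega_m}\leq Cd_\Omega$, $|\Omega_m\setminus\Omega|\to 0$, and the Hausdorff convergence $\Omega_m\to\Omega$.

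The core of the proof is the capacity estimate $\mathcal{K}_{\Omega_m}(r)\leq C\mathcal{K}_\Omega(r)$. In local graph coordinates, the norm of the second fundamental form on $\partial\Omega_m$ satisfies
\[
|\mathcal{B}_{\Omega_m}|(y',\psi_{j,m}(y'))\leq C(L_\Omega)\,\frac{|D^2\psi_{j,m}(y')|}{(1+|\nabla\psi_{j,m}|^2)^{3/2}}\leq C(L_\Omega)\,\bigl(|D^2\psi_j|*\rho_{1/m}\bigr)(y').
\]
For any $x\in\partial\Omega_m$, any $r\in(0,r_0)$, and any measurable $E\subset\partial\Omega_m\cap B_r(x)$ with base-hyperplane projection $E'$, a Fubini swap of the convolution integrand produces
\[
\int_E|\mathcal{B}_{\Omega_m}|\,d\mathcal{H}^{n-1}\leq C\int_{B'_R}|D^2\psi_j(z')|\bigl(\mathbf{1}_{E'}*\rho_{1/m}\bigr)(z')\,dz'\leq C\int_{\tilde E}|\mathcal{B}_\Omega|\,d\mathcal{H}^{n-1},
\]
where $\tilde E\subset\partial\Omega$ is the graph of $\psi_j$ over the $1/m$-enlargement of $E'$. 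Choosing $\tilde x\in\partial\Omega$ at distance $O(c_m)$ from $x$, one has $\tilde E\subset\partial\Omega\cap B_{r+1/m}(\tilde x)$, and the defining supremum of $\mathcal{K}_\Omega$ gives
\[
\int_{\tilde E}|\mathcal{B}_\Omega|\,d\mathcal{H}^{n-1}\leq\mathcal{K}_\Omega(r+1/m)\,\mathrm{cap}\bigl(B_{r+1/m}(\tilde x),\tilde E\bigr).
\]

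The main obstacle is the final capacity comparison $\mathrm{cap}(B_{r+1/m}(\tilde x),\tilde E)\leq C\,\mathrm{cap}(B_r(x),E)$, which must simultaneously absorb the mollification enlargement $E\mapsto\tilde E$ and the $O(c_m)$ Hausdorff displacement of the enclosing ball, with constants depending only on $L_\Omega$. One handles this by building an explicit bi-Lipschitz map from $B_{r+1/m}(\tilde x)$ onto a neighbourhood of $B_r(x)$ that sends $\tilde E$ to a superset of $E$, exploiting that both sets are graphs over nearby regions of the base hyperplane; the bi-Lipschitz invariance of relative capacity then yields the required inequality. For $m$ large enough so that $1/m<r/2$ and $r$ in a range $(0,r_0)$ controlled by $L_\Omega$ and $d_\Omega$, the factor $\mathcal{K}_\Omega(r+1/m)$ is absorbed into $C\mathcal{K}_\Omega(r)$, and taking the supremum over $E$ and $x\in\partial\Omega_m$ concludes the argument.
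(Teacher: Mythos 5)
The paper itself contains no proof of Lemma \ref{approxcap}: it is quoted from \cite[Lemma 5.2]{Cma} and \cite{Ant}, where the construction of $\Omega_m$ and the isocapacitary comparison \eqref{prop2} are the main content of a dedicated technical development. So your outline must be judged against that argument rather than against anything in this paper. Your construction (outward-shifted mollification of the local graphs) and the conclusions \eqref{prop1}, $|\Omega_m\setminus\Omega|\to 0$ and the Hausdorff convergence follow the standard route; note, however, that the gluing is more delicate than ``a subordinate partition of unity'': the coordinate frames in overlapping boundary balls differ by rotations, so in an overlap the glued boundary is not the graph of either mollified function, and its second fundamental form picks up derivatives of the cutoffs. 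Making the local mollifications compatible is a nontrivial part of \cite{Ant}.

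The genuine gap is in the capacity step, exactly where you flag ``the main obstacle''. Your chain passes through $\mathrm{cap}(B_{r+1/m}(\tilde x),\tilde E)$ with $\tilde E$ the graph over the $1/m$-fattening of $E'$, and then needs $\mathrm{cap}(B_{r+1/m}(\tilde x),\tilde E)\leq C\,\mathrm{cap}(B_r(x),E)$. This is false in general: fattening a set by $1/m$ can increase its condenser capacity by an unbounded factor. If $E$ is a boundary disc of radius $\delta\ll 1/m$, then $\mathrm{cap}(B_r(x),E)\approx\delta^{n-2}$ for $n\geq 3$, while the fattened set has capacity of order $m^{2-n}$, so the ratio blows up as $\delta\to 0$; no bi-Lipschitz map can repair this, because the obstruction is the enlargement of $E$, not the displacement of the ball. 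The lossy move is bounding $\mathbf{1}_{E'}*\rho_{1/m}$ by $\mathbf{1}_{E'+B_{1/m}}$. The correct argument keeps the convolution intact and writes $(\mathbf{1}_{E'}*\rho_{1/m})(z')=\int\rho_{1/m}(y)\,\mathbf{1}_{E'+y}(z')\,dy$, so that
\[
\int |D^2\psi_j|\,(\mathbf{1}_{E'}*\rho_{1/m})\,dz'=\int\rho_{1/m}(y)\Big(\int_{E'+y}|D^2\psi_j|\Big)\,dy,
\]
and for each fixed translate $y$ the graph of $\psi_j$ over $E'+y$ is the image of $E$ under a single shear-plus-translation that is bi-Lipschitz with constant depending only on $L_\Omega$ and moves points by $O(1/m)$; hence its capacity relative to a slightly enlarged ball is comparable to $\mathrm{cap}(B_r(x),E)$, and averaging in $y$ yields \eqref{prop2}. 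Without this averaging-over-translates device (or an equivalent one), your argument does not close.
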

\section{Proofs of the main results}\label{second-order}
Notice that our focus is on the case when the gradient is sufficiently small, as the behavior in regions where it is bounded away from zero is already well understood, being covered by the classical regularity theory. Therefore, in the following, we will use \eqref{estimate_a_eps} with $t<<1$.

\vspace{0.1 cm}

\begin{proof}[Proof of Theorem \ref{teo1INTRO}.]
The thesis formally follows from an application of Theorem \ref{STIMA}. However, in our setting, the lack of regularity of the weak solution $u$ and of the domain $\Omega$ prevents a direct application of previous results, requiring instead a regularization argument involving both the domain and the right-hand side of the equation.

\textbf{Dirichlet case.} First of all, we analyze \eqref{Dir_pb}, which solution obeys the Dirichlet homogeneous boundary condition. We split the proof into several steps.

\textit{Step $1.$} Here, we assume 
\begin{equation}\label{bordoregolare}
   \partial \Omega\in C^\infty,
\end{equation}
\vspace{0.05cm}
\begin{equation}\label{reg_f}
    f \in C^\infty(\Omega).
\end{equation}
Given $\varepsilon$ small enough, we consider a weak solution ${u_\varepsilon}$ to the approximating problem
\beq\label{system33}\tag{$p$-$D_\varepsilon$}
\begin{cases}
-\operatorname{ div}( a_\varepsilon(|\nabla u_\varepsilon|)\nabla{ u_\varepsilon})=   { f}(x)& \mbox{in $\Omega$}\\
 u_\varepsilon = 0  & \mbox{on  $\partial \Omega$},
\end{cases}
\eeq
where $a_\varepsilon$ is defined as in \eqref{a_cap}.\\
By \cite[Theorem $1.3$]{Cma_S} (see also \cite{Cma2}), we remark that:
\begin{equation}\label{linfinito}
    \|\nabla{u}_\varepsilon\|_{L^{\infty}(\Omega)}\leq C(n,i_a,s_a,|\Omega|,\|\operatorname{tr}\mathcal{B}\|_{L^{n-1,1}(\Omega)},\| f\|_{L(\Omega)}),
\end{equation}
where $C(n,i_a,s_a,|\Omega|,\|\operatorname{tr}\mathcal{B}\|_{L^{n-1,1}(\Omega)},\| f\|_{L(\Omega)})$ is a positive constant and $L(\Omega)$ is defined in \eqref{L_Omega}.\\
Furthermore, by \cite[Theorem $1.7$]{Li2}, there exist constants $\alpha \in [0,1)$ and a positive constant $C=C(n,s_a,i_a,\Omega,\|f\|_{L^\infty(\Omega)})$ such that:
\begin{equation*}
\|u_\varepsilon\|_{C^{1,\alpha}(\overline \Omega)} \leq C.
\end{equation*}
Moreover, by classical regularity, see \cite[Section $8.2$]{GIU}, weak solutions $u_\varepsilon$ of \eqref{system33} belong to $W^{2,2}(\Omega)$. This allows us to call for an iterative procedure, see \cite[Chapter $4$, Section $6$]{LU_S}, that leads to $u_\varepsilon \in C^3(\overline \Omega)$.

The regularity of $u_\varepsilon$ and the domain $\Omega$ enables us to apply the estimate \eqref{Dir_estimate} of Theorem \ref{STIMA}:
\begin{equation}\label{Dir_u_eps}
\begin{split}
&\int_\Omega \frac{a_\varepsilon(|\nabla u_\varepsilon|)}{(\varepsilon + |\nabla u_\varepsilon|^2)^{\frac{\alpha}{2}}} |D^2u_\varepsilon|^2 \ dx\\
&\qquad \leq \mathcal{C} \left (\int_{\Omega} \frac{a_\varepsilon(|\nabla u_\varepsilon|) }{(\varepsilon + |\nabla u_\varepsilon|^2)^{\frac{\alpha}{2}}} |\nabla u_\varepsilon|^2\ dx \right.\\
&\left.\qquad \qquad + \sum_{i=1}^n \int_\Omega \operatorname{div}(a_\varepsilon(|\nabla u_\varepsilon|)\nabla u_\varepsilon)  \partial_{x_i} \left (\frac{u_{\varepsilon,i}}{(\varepsilon + |\nabla u_\varepsilon|^2)^{\frac{\alpha}{2}}}\right)\ dx\right.\\
&\left. \qquad \qquad + \int_\Omega \operatorname{div}(a_\varepsilon(|\nabla u_\varepsilon|)\nabla u_\varepsilon) \frac{|\nabla u_\varepsilon|}{(\varepsilon + |\nabla u_\varepsilon|^2)^{\frac{\alpha}{2}}} \ dx\right),
\end{split}
\end{equation}
where $\mathcal{C}(n,i_a,s_a,\alpha,L_\Omega,d_\Omega,K_{\Omega})$ is a positive constant.\\
Recalling that $-\operatorname{ div}(a_\varepsilon(|\nabla u_\varepsilon|)\nabla{ u_\varepsilon})=   { f}(x)$, we deduce:
\begin{equation}\label{aia}
    \begin{split}
&\int_\Omega \frac{a_\varepsilon(|\nabla u_\varepsilon|)}{(\varepsilon + |\nabla u_\varepsilon|^2)^{\frac{\alpha}{2}}} |D^2u_\varepsilon|^2 \ dx\\
&\qquad \leq \mathcal{C} \left (\int_{\Omega} \frac{a_\varepsilon(|\nabla u_\varepsilon|) }{(\varepsilon + |\nabla u_\varepsilon|^2)^{\frac{\alpha}{2}}} |\nabla u_\varepsilon|^2\ dx \right.\\
&\left.\qquad \qquad - \sum_{i=1}^n \int_\Omega f \cdot \partial_{x_i} \left (\frac{u_{\varepsilon,i}}{(\varepsilon + |\nabla u_\varepsilon|^2)^{\frac{\alpha}{2}}}\right)\ dx\right.\\
&\left. \qquad \qquad - \int_\Omega f \frac{|\nabla u_\varepsilon|}{(\varepsilon + |\nabla u_\varepsilon|^2)^{\frac{\alpha}{2}}} \ dx\right),
\end{split}
\end{equation}
\begin{comment}
    &  \qquad\left.+\sum_{i=1}^n\int_\Omega \langle \partial_{x_i} {f},{u}_{\varepsilon,i} \rangle \frac{1}{(\varepsilon+|D{ u}_\varepsilon|^2)^{\frac{\alpha}{2}}}\,dx\right.\\
    +\overline{C}\int_{\partial \Omega}  |{f}|\,d\mathcal{H}^{n-1}
\end{comment}
%\qquad\left.+\sum_{i=1}^n\int_\Omega \langle -{f}, \partial _{x_i}\left(\frac{{u}_{\varepsilon,i}}{(\varepsilon+|D{ u}_\varepsilon|^2)^{\frac{\alpha}{2}}}\right)\rangle\,dx
To proceed, we treat the second term in the right hand side of \eqref{aia} separately:
\begin{equation*}
    F:=-\sum_{i=1}^n \int_{\Omega} f \cdot \partial_{x_i}\left(\frac{{u}_{\varepsilon,i}}{(\varepsilon+|\nabla{ u}_\varepsilon|^2)^{\frac{\alpha}{2}}}\right) \,dx.
\end{equation*}
Integrating by parts, we get:
\begin{equation*}
\begin{split}
   -\sum_{i=1}^n \int_{\Omega} f \cdot \partial_{x_i}\left(\frac{{u}_{\varepsilon,i}}{(\varepsilon+|\nabla{ u}_\varepsilon|^2)^{\frac{\alpha}{2}}}\right) \,dx=
    &\sum_{i=1}^n \int_{\Omega} \partial_{x_i} f \cdot\left(\frac{{u}_{\varepsilon,i}}{(\varepsilon+|\nabla{ u}_\varepsilon|^2)^{\frac{\alpha}{2}}}\right) \,dx\\
    &- \sum_{i=1}^n \int_{\partial \Omega} f \cdot \left(\frac{{u}_{\varepsilon,i}}{(\varepsilon+|\nabla{ u}_\varepsilon|^2)^{\frac{\alpha}{2}}}\right) \nu_i \,d\mathcal{H}^{n-1}.
\end{split}
\end{equation*}
Notice that, since $u_\varepsilon \in C^1(\overline \Omega)$, $\|\nabla u_\varepsilon\|_{C^{0}(\overline \Omega)}=\|\nabla u_\varepsilon\|_{L^\infty(\Omega)}$. Therefore, by \eqref{linfinito}, we have the following estimate:
\begin{equation}\label{bound_G}
\begin{split}
    |F| &\leq \sum_{i=1}^n \int_{\Omega} |\partial_{x_i} f||\nabla u_\varepsilon|^{1-\alpha}\,dx +\|\nabla u_\varepsilon\|_{L^\infty(\Omega)}^{1-\alpha} \int_{\partial\Omega} |{f}| \,d\mathcal{H}^{n-1}\\
    &\leq \tilde{C} \left (\int_{\Omega} |\nabla{f}|\, dx+ \int_{\partial\Omega} |{f}| \,d\mathcal{H}^{n-1} \right),
\end{split}
\end{equation}
with $\tilde{C}=\tilde{C}(n,i_a,s_a,|\Omega|,\|\operatorname{tr}\mathcal{B}\|_{L^{n-1,1}(\Omega)},\| f\|_{L(\Omega)},\alpha)$.

By \eqref{bound_G} and a trace inequality in Lipschitz domains, see \cite{Nec}, we can rewrite \eqref{aia} as follows:
\begin{equation}\label{aia2}
    \begin{split}
&\int_\Omega \frac{a_\varepsilon(|\nabla u_\varepsilon|)}{(\varepsilon + |\nabla u_\varepsilon|^2)^{\frac{\alpha}{2}}} |D^2u_\varepsilon|^2 \ dx\\
&\qquad \leq \mathcal{C} \left (\int_{\Omega} \frac{a_\varepsilon(|\nabla u_\varepsilon|) }{(\varepsilon + |\nabla u_\varepsilon|^2)^{\frac{\alpha}{2}}} |\nabla u_\varepsilon|^2\ dx  + \|f\|_{W^{1,1}(\Omega)} \right).
\end{split}
\end{equation}
where $\mathcal{C}=\mathcal{C}(n,i_a,s_a,\alpha,L_\Omega,d_\Omega,K_{\Omega},\| f\|_{L(\Omega)},|\Omega|,\|\operatorname{tr}\mathcal{B}\|_{L^{n-1,1}(\Omega)})$ is a positive constant.

Finally, again by \eqref{linfinito} and \eqref{estimate_a_eps}, recalling that $\alpha\in [0,1),$ we deduce:
\begin{equation}\label{second_deriv_eps}
    \int_\Omega \frac{a_\varepsilon(|\nabla u_\varepsilon|)}{(\varepsilon + |\nabla u_\varepsilon|^2)^{\frac{\alpha}{2}}} |D^2u_\varepsilon|^2 \ dx\leq \mathcal{C},
\end{equation}
where $\mathcal{C}=\mathcal{C}(n,i_a,s_a,\alpha,L_\Omega,d_\Omega,K_{\Omega},|\Omega|,\|\operatorname{tr}\mathcal{B}\|_{L^{n-1,1}(\Omega)},\| f\|_{L(\Omega)},\| f\|_{W^{1,1}(\Omega)})$.\\
Now, under assumptions on $k$ (see \eqref{T_k}), we claim
\begin{equation}\label{bound_quasi_stress_field1}
\left \|a_\varepsilon(|\nabla u_\varepsilon|)^{k}\nabla u_{\varepsilon} \right \|_{W^{1,2}(\Omega)}\leq C,
\end{equation}
where $C=C(n,i_a,s_a,\alpha,L_\Omega,d_\Omega,\Psi_{\Omega},|\Omega|,\|\operatorname{tr}\mathcal{B}\|_{L^{n-1,1}(\Omega)},\| f\|_{L(\Omega)},\| f\|_{W^{1,1}(\Omega)})$ is a positive constant.
To move forward, let us distinguish different cases:\\
\textbf{Case 1.} If $0 \leq i_a < s_a$, by \eqref{estimate_a_eps}, we have:
\begin{equation*}
    \frac{1}{(\varepsilon + |\nabla u_\varepsilon|^2)^{\frac{\alpha}{2}}} \geq \frac{a(1)^{\frac{\alpha}{s_a}}}{a_\varepsilon(|\nabla u_\varepsilon|)^{\frac{\alpha}{s_a}}}.
\end{equation*}
Therefore, by \eqref{second_deriv_eps}, we obtain the following estimate
\begin{equation}\label{before_gamma}
    \int_\Omega a_\varepsilon(|\nabla u_\varepsilon|)^{1-\frac{\alpha}{s_a}} |D^2u_\varepsilon|^2 \ dx\leq \mathcal{C}.
\end{equation}
Let $\gamma \geq 0$. By \eqref{estimate_a_eps} and \eqref{linfinito} we note that
\begin{equation}\label{Est_n}
    \frac{a(1)^\gamma}{a_\varepsilon(|\nabla u_\varepsilon|)^\gamma} \geq \frac{1}{(\varepsilon + |\nabla u_\varepsilon|^2)^{\frac{i_a}{2}\gamma}} \geq C(\|\nabla u_\varepsilon\|_{L^\infty(\Omega)},i_a,\gamma),
\end{equation} 
where $C(\|\nabla u_\varepsilon\|_{L^\infty(\Omega)},i_a,\gamma)$ is a positive constant.\\
By \eqref{before_gamma} and \eqref{Est_n}, for every $\gamma\geq 0$, we have
\begin{equation}\label{after_gamma}
\begin{split}
C(\|\nabla u_\varepsilon\|_{L^\infty(\Omega)},i_a,\gamma)&\int_\Omega {a_\varepsilon(|\nabla u_\varepsilon|)^{1-\frac{\alpha}{s_a}+ \gamma}} |D^2u_\varepsilon|^2 \ dx \\&\leq\int_\Omega \frac{a_\varepsilon(|\nabla u_\varepsilon|)^{1-\frac{\alpha}{s_a}+ \gamma}}{a_\varepsilon(|\nabla u_\varepsilon|)^\gamma} |D^2u_\varepsilon|^2 \ dx\leq \mathcal{C}.
\end{split}
\end{equation}
In other words, we get the following estimate:
\begin{equation}\label{ia_sa_pos}
    \int_\Omega {a_\varepsilon(|\nabla u_\varepsilon|)}^{2k} |D^2u_\varepsilon|^2 \ dx\leq
    \mathcal{C}, 
\end{equation}
for any $k \geq \frac{s_a-\alpha}{2s_a}$.\\
Moreover, by \eqref{estimate_a_eps} and \eqref{linfinito}, we deduce 
\begin{equation}\label{L2primocas}
    {a_\varepsilon(|\nabla u_\varepsilon|)}^{k}|\nabla u_\varepsilon| \leq C(i_a,s_a,k,\| \nabla u_\varepsilon\|_{L^\infty(\Omega)}),
\end{equation}
for $k\geq -1/s_a$. In addition, for every $\alpha\in [0,1)$, we note that $(s_a-\alpha)/2s_a>-1/s_a$.\\
\textbf{Case 2.} If $i_a < s_a\leq0$, by \eqref{estimate_a_eps}, we have:
\begin{equation*}
    \frac{1}{(\varepsilon + |\nabla u_\varepsilon|^2)^{\frac{\alpha}{2}}} \geq \frac{a(1)^{\frac{\alpha}{i_a}}}{a_\varepsilon(|\nabla u_\varepsilon|)^{\frac{\alpha}{i_a}}}.
\end{equation*}
Therefore, by \eqref{second_deriv_eps},
\begin{equation}\label{int_n}%\label{before_gamma}
    \int_\Omega a_\varepsilon(|\nabla u_\varepsilon|)^{1-\frac{\alpha}{i_a}} |D^2u_\varepsilon|^2 \ dx\leq \mathcal{C}.
\end{equation}
Let $\gamma \leq 0$. By \eqref{estimate_a_eps} and \eqref{linfinito} we get:
\begin{equation}\label{bef_g}
    \frac{a(1)^\gamma}{a_\varepsilon(|\nabla u_\varepsilon|)^\gamma} \geq \frac{1}{(\varepsilon + |\nabla u_\varepsilon|^2)^{\frac{s_a}{2}\gamma}} \geq C(\|\nabla u_\varepsilon\|_{L^\infty(\Omega)},s_a,\gamma),
\end{equation}
By \eqref{int_n} and \eqref{bef_g}, for every $\gamma\leq 0$, we have
\begin{equation}\label{after_gamma_2}
\begin{split}
C(\|\nabla u_\varepsilon\|_{L^\infty(\Omega)},s_a,\gamma)&\int_\Omega {a_\varepsilon(|\nabla u_\varepsilon|)^{1-\frac{\alpha}{i_a}+ \gamma}} |D^2u_\varepsilon|^2 \ dx \\&\leq\int_\Omega \frac{a_\varepsilon(|\nabla u_\varepsilon|)^{1-\frac{\alpha}{i_a}+ \gamma}}{a_\varepsilon(|\nabla u_\varepsilon|)^\gamma} |D^2u_\varepsilon|^2 \ dx\leq \mathcal{C}.
\end{split}
\end{equation}
Hence, we obtain the following estimate
\begin{equation}\label{ia_sa_neg}
    \int_\Omega {a_\varepsilon(|\nabla u_\varepsilon|)}^{2k} |D^2u_\varepsilon|^2 \ dx\leq \mathcal{C}, 
\end{equation}
for $k \leq \frac{i_a-\alpha}{2i_a}$.\\
Furthermore, by \eqref{estimate_a_eps} and \eqref{linfinito}, we deduce 
\begin{equation}\label{L2seccas}
    {a_\varepsilon(|\nabla u_\varepsilon|)}^{k}|\nabla u_\varepsilon| \leq C(i_a,s_a,k,\| \nabla u_\varepsilon\|_{L^\infty(\Omega)}),
\end{equation}
for $k\leq -1/i_a$. In addition, for every $\alpha\in [0,1)$, we note that $(i_a-\alpha)/2i_a < -1/i_a$.\\
\textbf{Case 3.} If $i_a < 0 <s_a$, we proceed in the following way,
\begin{equation*}
\begin{split}
    \int_\Omega &{a_\varepsilon(|\nabla u_\varepsilon|)}^{2k} |D^2u_\varepsilon|^2 \ dx \\
    &\quad = \int_\Omega {a_\varepsilon(|\nabla u_\varepsilon|)}^{2k-1} (\varepsilon + |\nabla u_\varepsilon|^2)^{\frac{\alpha}{2}} \frac{a_\varepsilon(|\nabla u_\varepsilon|)}{(\varepsilon + |\nabla u_\varepsilon|^2)^{\frac{\alpha}{2}}}|D^2u_\varepsilon|^2 \ dx. 
\end{split}
\end{equation*}
Let us now distinguish two cases, namely:\\
    i) If $2k -1 \geq 0$, by \eqref{estimate_a_eps} it follows that:
    \begin{equation*}
        a_\varepsilon(|\nabla u_\varepsilon|)^{2k-1} \leq a(1)^{2k-1}(\varepsilon + |\nabla u_\varepsilon|^2)^{\frac{i_a}{2}(2k-1)}.
    \end{equation*}
    Therefore, by \eqref{linfinito} and \eqref{second_deriv_eps},
    \begin{equation*}
    \begin{split}
        \int_\Omega &{a_\varepsilon(|\nabla u_\varepsilon|)}^{2k} |D^2u_\varepsilon|^2 \ dx \\
        &\quad\leq a(1)^{2k-1}\int_\Omega (\varepsilon + |\nabla u_\varepsilon|^2)^{\frac{i_a}{2}(2k-1)+\frac{\alpha}{2}}\frac{a_\varepsilon(|\nabla u_\varepsilon|)}{(\varepsilon + |\nabla u_\varepsilon|^2)^{\frac{\alpha}{2}}}|D^2u_\varepsilon|^2 \ dx \leq \mathcal{C},
    \end{split}
    \end{equation*}
    for any $k \leq \frac{1}{2} - \frac{\alpha}{2i_a}$.\\
    ii) If instead, $2k -1 < 0$, we have the estimate:
    \begin{equation*}
         a_\varepsilon(|\nabla u_\varepsilon|)^{2k-1} \leq a(1)^{2k-1}(\varepsilon + |\nabla u_\varepsilon|^2)^{\frac{s_a}{2}(2k-1)}.
    \end{equation*}
    Hence, again by \eqref{linfinito} and \eqref{second_deriv_eps},
    \begin{equation*}
    \begin{split}
        \int_\Omega &{a_\varepsilon(|\nabla u_\varepsilon|)}^{2k} |D^2u_\varepsilon|^2 \ dx \\
        &\quad\leq \int_\Omega (\varepsilon + |\nabla u_\varepsilon|^2)^{\frac{s_a}{2}(2k-1)+\frac{\alpha}{2}}\frac{a_\varepsilon(|\nabla u_\varepsilon|)}{(\varepsilon + |\nabla u_\varepsilon|^2)^{\frac{\alpha}{2}}}|D^2u_\varepsilon|^2 \ dx \leq \mathcal{C},
    \end{split}
    \end{equation*}
    for any $k \geq \frac{1}{2} - \frac{\alpha}{2s_a}$.\\
Summing up, we can conclude that:
\begin{equation}\label{ia_neg_sa_pos}
    \int_\Omega {a_\varepsilon(|\nabla u_\varepsilon|)}^{2k} |D^2u_\varepsilon|^2 \ dx\leq \mathcal{C}, 
\end{equation}
for $\frac{s_a-\alpha}{2s_a}\leq k \leq \frac{i_a-\alpha}{2i_a}$.\\
Additionally, by \eqref{estimate_a_eps} and \eqref{linfinito}, we deduce 
\begin{equation}\label{L2terzcas}
    {a_\varepsilon(|\nabla u_\varepsilon|)}^{k}|\nabla u_\varepsilon| \leq C(i_a,s_a,k,\| \nabla u_\varepsilon\|_{L^\infty(\Omega)}),
\end{equation}
for $-1/s_a \leq k\leq -1/i_a$. In addition, for every $\alpha\in [0,1)$, we note that $(i_a-\alpha)/2i_a < -1/i_a$ and $(s_a-\alpha)/2s_a>-1/s_a$.

\vspace{0.3 cm}
\noindent
A direct computation shows that:
\begin{equation}\label{W12ineq}
\begin{split}
    &\left |\nabla (a_\varepsilon(|\nabla u_\varepsilon|)^{k}u_{\varepsilon,i}) \right | \\
    &\ \ \leq C(n)\left[|k|\max\{|s_a|,|i_a|\} +1\right] a_\varepsilon(|\nabla u_\varepsilon|)^{k}| D^2  u_\varepsilon | \quad \text{a.e. in } \Omega,
\end{split}
\end{equation}
for $i=1,...,n$ and for some positive constant $C(n)$.\\
Therefore, in light of the preceding inequalities \eqref{ia_sa_pos}, \eqref{L2primocas}, \eqref{ia_sa_neg}, \eqref{L2seccas}, \eqref{ia_neg_sa_pos}, \eqref{L2terzcas} and \eqref{W12ineq} we have:
\begin{equation}\label{bound_quasi_stress_field}
\left \|a_\varepsilon(|\nabla u_\varepsilon|)^{k}\nabla u_{\varepsilon} \right \|_{W^{1,2}(\Omega)}\leq C,
\end{equation}
where $C=C(n,i_a,s_a,\alpha,L_\Omega,d_\Omega,\Psi_{\Omega},|\Omega|,\|\operatorname{tr}\mathcal{B}\|_{L^{n-1,1}(\Omega)},\| f\|_{L(\Omega)},\| f\|_{W^{1,1}(\Omega)})$ is a positive constant and $k$ in the corresponding range of \textbf{Case 1}, \textbf{Case 2} and \textbf{Case 3}. Notice that the dependence of the constant by $\Psi_\Omega$ is due to Lemma \ref{antoninho2}.\\
Owing to the uniform boundedness in $W^{1,2}$, there exists a function $V \in W^{1,2}(\Omega)$ such that, up to a subsequence $\{\varepsilon_l\}$,
\begin{equation*}%\label{conv_L2}
    a_{\varepsilon_l}(|\nabla u_{\varepsilon_l}|)^k\nabla u_{\varepsilon_l} \rightarrow  V \text{ in } L^2(\Omega),
\end{equation*}
\begin{equation}\label{weakconv}
     a_{\varepsilon_l}(|\nabla u_{\varepsilon_l}|)^k\nabla u_{\varepsilon_l} \rightharpoonup  V \text{ in } W^{1,2}(\Omega).
\end{equation}
In addition, by \cite[Theorem $1.7$]{Li2}, $u_{\varepsilon_l}$ is uniformly bounded in $C^{1,\alpha}$, for any open set $\Omega' \subset\subset\Omega$. Therefore, there exists $w\in C^1(\Omega')$ such that:
\begin{equation*}
    u_{\varepsilon_l} \rightarrow w \quad \text{in } C^{1,\alpha'}(\Omega'),
\end{equation*}
with $\alpha' < \alpha$.\\
We claim that:
\begin{equation}\label{claim_A}
    a_{\varepsilon_l}(|\nabla u_{\varepsilon_l}|) \rightarrow a(|\nabla w|), 
\end{equation}
uniformly in compact subsets of $\Omega \setminus\{\nabla w = 0\}$.\\
Indeed, by the uniform convergence, there exists $\varepsilon_{l,0}>0$ such that for every $\varepsilon_l \leq \varepsilon_{l,0}$, $\nabla u_{\varepsilon_l} \neq 0$, for any $x \in \mathcal{I}(x_0)$, where $\mathcal{I}(x_0)$ denotes a neighborhood of $x_0 \in \Omega \setminus\{\nabla w = 0\}$.

Moreover:
\begin{equation*}
\begin{split}
    &a_{\varepsilon_l}(|\nabla u_{\varepsilon_l}|) - a(|\nabla w|)\\
    &\ = a_{\varepsilon_l}(|\nabla u_{\varepsilon_l}|) - a_{\varepsilon_l}(|\nabla w|) + a_{\varepsilon_l}(|\nabla w|)- a(|\nabla w|)\\
    &\ = a'_{\varepsilon_l}(\xi)(|\nabla u _{\varepsilon_l}|-|\nabla w|) + a_{\varepsilon_l}(|\nabla w|)- a(|\nabla w|),\\
\end{split}
\end{equation*}
where $\xi \in (\min\{|\nabla u_{\varepsilon_l}|,|\nabla w|\},\max\{|\nabla u_{\varepsilon_l}|,|\nabla w|\})$.\\
Hence, the claim \eqref{claim_A} follows by the uniform convergence of the gradients and by the fact that, where $\nabla u_{\varepsilon_l} \neq 0$ and $\nabla w \neq 0$, $a'_{\varepsilon_l}(\xi)$ is uniformly bounded and $a_{\varepsilon_l}(|\nabla w|) \rightarrow a(|\nabla w|)$ as $l \rightarrow \infty$, by \eqref{conv_a_eps}.\\
In particular,
\begin{equation*}
    a(|\nabla w|)^{k}\nabla w =  V, \quad \text{ for any } x\in \Omega \setminus \{\nabla w = 0\}.
\end{equation*}
On the other hand, in the critical set $\{\nabla w=0\}$, with $k$ in the threshold given by \eqref{T_k}, we have
\begin{equation*}
    a(|\nabla w|)^{k}\nabla w = 0,
\end{equation*}
and,
\begin{equation*}
    a_{\varepsilon_l}(|\nabla u_{\varepsilon_l}|)^{k}\nabla u_{\varepsilon_l} \rightarrow 0, \quad \text{as } l \rightarrow \infty,
\end{equation*}
by the uniform convergence of the gradients and by \eqref{unif_conv}.

Thus, we can conclude that:
\begin{equation}\label{stress_w}
    a(|\nabla w|)^{k}\nabla w =  V \in W^{1,2}(\Omega), \quad \text{ for any } x\in \Omega.
\end{equation}
Testing \eqref{system33} with any test function $\varphi \in C_0^\infty(\Omega)$ leads to:
\begin{equation*}
    \int_\Omega a_{\varepsilon_l}(|\nabla u_{\varepsilon_l}|)\nabla u_{\varepsilon_l} \cdot \nabla \varphi \ dx= \int_\Omega f \ \varphi \ dx.  
\end{equation*}
Passing to the limit as $l \rightarrow \infty$, in view of \eqref{weakconv} and \eqref{stress_w}, yields
\begin{equation}\label{weak_Inf}
    \int_\Omega a(|\nabla w|)\nabla w \cdot \nabla \varphi \ dx= \int_\Omega f \ \varphi \ dx.  
\end{equation}
By \cite[Proposition $2.14$]{Cma_S} and by \eqref{Poincare}, the sequence $u_{\varepsilon_l}$ is bounded in the Orlicz-Sobolev space $W^{1,B_{\varepsilon_l}}_0(\Omega)$, thus, by \eqref{unif_conv}, the function $w$ belongs to $W^{1,B}_0(\Omega)$. This implies that $a(|\nabla w|)\nabla w \in L^{\tilde B}(\Omega)$. Moreover, by Theorem \ref{imbedding}, the space $C_0^\infty(\Omega)$ is dense in $W_0^{1,B}(\Omega)$, thus \eqref{weak_Inf} holds for every $\varphi \in W_0^{1,B}(\Omega)$. In addition, we can conclude, by \cite[Theorem $2.13$]{Cma_S}, that $w=u$, the unique weak solution of \eqref{Dir_pb}.

\vspace{0.5cm}

\textit{Step $2$.} Here, we remove assumption \eqref{bordoregolare}.\\
We consider a sequence of open sets $\{\Omega_m\}$ approximating $\Omega$ in the sense of Lemma \ref{approxcap}. In addition, see \cite{CiaMa}, this sequence is such that:
\begin{equation*}
    \sup_{x\in \partial \Omega} \|\mathcal{B}_m\|_{L^{n-1,\infty}(\partial \Omega_m \cap B_r(x))} \leq C \sup_{x\in \partial \Omega} \|\mathcal{B}\|_{L^{n-1,\infty}(\partial \Omega \cap B_r(x))} \quad \text{if } \ n\geq 3,
\end{equation*}
and,
\begin{equation*}
    \sup_{x\in \partial \Omega} \|\mathcal{B}_m\|_{L^{1,\infty}\log L(\partial \Omega_m \cap B_r(x))} \leq C \sup_{x\in \partial \Omega} \|\mathcal{B}\|_{L^{1,\infty}\log L(\partial \Omega \cap B_r(x))} \quad \text{if } \ n = 2,
\end{equation*}
for some constant $C=C(\Omega)$, where $\mathcal{B}_m$ denotes the second fundamental form on $\partial \Omega_m$.\\
For $m\in \N$, let ${u}_m$ be the weak solution to the Dirichlet problem 
\beq\label{system34}\tag{$p$-$D_m$}
\begin{cases}
-\operatorname{ div}(a(|\nabla u_m|)\nabla{ u_m})=   { f}_m(x)& \mbox{in $\Omega_m$}\\
 u_m= 0  & \mbox{on  $\partial \Omega_m$},
\end{cases}
\eeq
where $ f_m\in C^{\infty}_c(\R^n)$. Here, ${f}_m$
  is constructed by extending ${f}$ to the whole $\R^n$, since $f$ belongs to the Sobolev space, and then using classical mollifiers and standard cut-off functions. With this choice $ f_m$ restricted to $\Omega$ converges to $ f$ in the norm $W^{1,1}(\Omega)\cap L(\Omega).$ Moreover $$\|{f}_m\|_{W^{1,1}(\Omega_m)}\leq C\|{f}\|_{W^{1,1}(\Omega)}\quad\text{and}\quad \|{f}_m\|_{L(\Omega_m)}\leq C \|{f}\|_{L(\Omega)},$$
where $C$ is a positive constant independent on $m$.

By inequality \eqref{bound_quasi_stress_field}, applied to ${u}_m$, and by \eqref{stress_w} we get
\begin{equation}\label{stress_field_u_m}
\left \|a(|\nabla u_m|)^{k}\nabla u_m \right \|_{W^{1,2}(\Omega)}\leq \left \|a(|\nabla u_m|)^{k}\nabla u_m  \right \|_{W^{1,2}(\Omega_m)} \leq C,
\end{equation}
where $C(n,i_a,s_a,\alpha,L_\Omega,d_\Omega,\Psi_{\Omega},\|\operatorname{tr}\mathcal{B}\|_{L^{n-1,1}(\Omega)},\|  f\|_{L(\Omega)},\| f\|_{W^{1,1}(\Omega)})$ is a positive constant independent on $m$ and $k$ in the corresponding range. Note that this dependence of the constant $\mathcal{C}$ is guaranteed by the properties \eqref{prop1} and \eqref{prop2} of the sequence $\{\Omega_m\}$ and by the convergence of $ f_m$ to ${f}$.

By \eqref{stress_field_u_m}, we deduce that there exists ${V} \in W^{1,2}(\Omega)$ such that, up to a subsequence still denoted by $\{u_m\}$,
\begin{equation}\label{weakconv_m}
\begin{split}
    &a(|\nabla u_m|)^{k}\nabla u_m \rightarrow  V \text{ in } L^2(\Omega),\\
    &a(|\nabla u_m|)^{k}\nabla u_m \rightharpoonup  V \text{ in } W^{1,2}(\Omega),
    \end{split}
\end{equation}

As in \textit{Step 1}, by a result in \cite{Li2}, there exist $\alpha \in (0,1)$ and a positive constant independent on $m$, such that:
\begin{equation*}
    \|u_m\|_{C^{1,\alpha}(\Omega')} \leq C,
\end{equation*}
for every open set $\Omega' \subset \subset \Omega$. In particular,
\begin{equation*}
    u_m \rightarrow v \quad \text{in } C^{1,\alpha'}(\Omega'),
\end{equation*}
with $\alpha' < \alpha$, for some function $v \in C^1(\Omega')$.\\
Repeating the same procedure as in \textit{Step 1}, leads to:
\begin{equation}\label{stress_v}
    a(|\nabla v|)^{k}\nabla v = V \in W^{1,2}(\Omega).
\end{equation}
Given as test function $ \varphi \in C_0^\infty(\Omega)$ in the weak formulation of \eqref{system34}, and pass to the limit as $m \rightarrow \infty$ in the resulting equation, by \eqref{weakconv_m} and \eqref{stress_v}, we obtain:
\begin{equation}\label{limit_Dvs}
    \int_{\Omega}a(|\nabla v|)\nabla v\cdot \nabla{\varphi}\,dx=\int_\Omega f\varphi\,dx.
\end{equation}
The exact same argument previously applied to the equation \eqref{weak_Inf} ensures that equation \eqref{limit_Dvs} continues to hold for any test function $\varphi \in W^{1,B}_0(\Omega)$. Hence, $v=u$, by the uniqueness of \eqref{Dir_pb} and,
\begin{equation*}
    a(|\nabla u|)^{k}\nabla u \in W^{1,2}(\Omega).
\end{equation*}
%
%\vspace{0.5cm}

\textit{Step $3$.} The last step consists in removing the assumption \eqref{reg_f}. Let ${f} \in W^{1,1}(\Omega) \cap L(\Omega)$. By standard density argument one can infer that there exists a sequence $\{{f}_s\} \subset C^\infty(\overline\Omega)$ (as in \textit{Step 2}) such that
\begin{equation}\label{conv_f}
    {f}_s \rightarrow {f} \quad \text{in } W^{1,1}(\Omega) \cap L(\Omega).
\end{equation}

To go ahead we consider a sequence $\{{u}_s\}$ of weak solutions to the following Dirichlet problem:
\beq\label{system_step3}
\begin{cases}
-\operatorname{ div}(a(|\nabla u_s|)\nabla{ u_s})=   { f_s}& \mbox{in $\Omega$}\\
 u_s= 0  & \mbox{on  $\partial \Omega$}.
\end{cases}
\eeq
By inequality \eqref{stress_field_u_m} of \textit{Step $2$} applied to $ u_s$ we deduce:
\begin{equation}\label{stress_field_u_k}
    \begin{split}
&\left \|a(|\nabla u_s|)^{k}\nabla u_s \right \|_{W^{1,2}(\Omega)}\\
&\quad\leq C(n,i_a,s_a,\alpha,L_\Omega,d_\Omega,\Psi_{\Omega},\|\operatorname{tr}\mathcal{B}\|_{L^{n-1,1}(\Omega)},\|  f_s\|_{L(\Omega)},\| f_s\|_{W^{1,1}(\Omega)})\\
&\quad\leq \bar C(n,i_a,s_a,\alpha,L_\Omega,d_\Omega,\Psi_{\Omega},\|\operatorname{tr}\mathcal{B}\|_{L^{n-1,1}(\Omega)},\| f\|_{L(\Omega)},\| f\|_{W^{1,1}(\Omega)}),
\end{split}
\end{equation}
where the last inequality follows by \eqref{conv_f}.\\
By \eqref{stress_field_u_k}, there exists $W\in W^{1,2}(\Omega)$ such that:
\begin{equation}\label{conv_2}
a(|\nabla u_s|)^{k}\nabla u_s \rightarrow W \quad \text{in } L^2(\Omega), \quad a(|\nabla u_s|)^{k}\nabla u_s \rightharpoonup W \quad \text{in } W^{1,2}(\Omega).
\end{equation}
\begin{comment}
    Thus, $\exists V \in W^{1,2}(\Omega)$ such that
\begin{equation*}
    |D{u}_k|^{\gamma-1} D  u_k  \rightharpoonup V \quad \text{in } W^{1,2}(\Omega)
\end{equation*}
Let us take as test function $ \psi \in C^\infty_0(\Omega)$ and consider the weak formulation associated to \eqref{system_step3}:
\begin{equation*}
    \int_{\Omega}|D {u}_k|^{p-2}(D{u}_k:D{\psi})\,dx=\int_\Omega \langle{f}_k,{\psi}\rangle\,dx.
\end{equation*}
\end{comment}

By \cite[Proposition $2.14$]{Cma_S}, there exists a positive constant $C$ independent on $s$ such that:
\begin{equation}\label{bound_u_k}
\begin{split}
    \|\nabla u_s\|_{L^B(\Omega)}& \leq C(n,i_a,s_a,|\Omega|,\|f_s\|_{L(\Omega)})\\
    &\leq C(n,i_a,s_a,|\Omega|,\|f\|_{L(\Omega)}),
\end{split}
\end{equation}
where we used \eqref{conv_f}.\\
In addition, by \eqref{Poincare}, the sequence $\{u_s\}$ is bounded in $W^{1,B}_0(\Omega)$. Since $W_0^{1,B}(\Omega)$ is reflexive and the embedding of $W^{1,B}_0(\Omega)$ on $L^B(\Omega)$ is compact, there exists a function $u \in W^{1,B}_0(\Omega)$ such that, up to a subsequence,
\begin{equation*}
    u_s \rightarrow u \quad \text{in } L^B(\Omega),
\end{equation*}
\begin{equation}\label{W1_B_conv}
    u_s \rightharpoonup u \quad \text{in } W^{1,B}_0(\Omega),
\end{equation}
We claim that:
\begin{equation}\label{convergenzalp}
    \nabla{u}_s \rightarrow \nabla{u} \quad \text{a.e. in } \Omega.
\end{equation}
Indeed, testing \eqref{system_step3} and its analogue with $s$ replaced by $m$, by $u_s - u_m \in W^{1,B}_0(\Omega)$, and subtracting the resulting equations, we get:
\begin{equation*}
\begin{split}
    \int_\Omega &(a(|\nabla u_s|)\nabla u_s - a(|\nabla u_m|)\nabla u_m)\cdot \nabla(u_s-u_m) \ dx\\
    &\qquad \qquad= \int_{\Omega}(f_s-f_m)(u_s - u_m) \ dx\\
    &\qquad \qquad\leq \|f_s-f_m\|_{L^{\tilde B_n}(\Omega)}\|u_s-u_m\|_{L^{B_n}(\Omega)}\\
    &\qquad \qquad\leq C(\Omega,B)\|f_s-f_m\|_{L^{\tilde B_n}(\Omega)}\|\nabla(u_s-u_m)\|_{L^{B}(\Omega)}\\
    &\qquad \qquad\leq \tilde C(\Omega,B)\|f_s-f_m\|_{L^{n,1}(\Omega)}\|\nabla(u_s-u_m)\|_{L^{B}(\Omega)}\\
    &\qquad \qquad\leq \tilde C(\Omega,B)\|f_s-f_m\|_{L^{n,1}(\Omega)}\left(\|\nabla u_s\|_{L^{B}(\Omega)}+\|\nabla u_m\|_{L^{B}(\Omega)}\right),
\end{split}
\end{equation*}
notice that in the previous inequalities we have used \eqref{Sobolev} and the embedding of $L^{n,1}$ into $L^{\tilde B_n}$ (see \cite[Remark $2.12$]{CiaMa}).\\
Hence, using \eqref{bound_u_k} and \eqref{conv_f}, we get

\begin{equation*}
    \lim_{s \rightarrow\infty, m \rightarrow\infty} \int_\Omega (a(|\nabla u_s|)\nabla u_s - a(|\nabla u_m|)\nabla u_m)\cdot \nabla(u_s-u_m) \ dx =0.
\end{equation*}
We shall now follow an argument from \cite{Boccardo} (see also \cite[Step 4]{Cma_S}. Let $\delta > 0$. Given any $t,\tau>0$, we have:
\begin{equation*}
\begin{split}
    &|\{|\nabla u_s - \nabla u_m|>t\}|\\
    &\quad\leq |\{|\nabla u_s|>\tau\}|+|\{|\nabla u_m|>\tau\}|+|\{|\nabla u_s - \nabla u_m|>t,|\nabla u_s|\leq\tau,|\nabla u_m|\leq\tau\}|,
\end{split}
\end{equation*}
for $s,m \in \mathbb{N}$. Moreover, by \eqref{bound_u_k}, if $\tau$ is sufficiently large,
\begin{equation*}
    |\{|\nabla u_s|>\tau\}| < \delta, \quad \text{for } s \in \mathbb{N}.
\end{equation*}
Let us define:
\begin{equation*}
    \theta = \inf\{[a(|\xi|)\xi-a(|\eta|)\eta]\cdot (\xi-\eta) : |\xi-\eta|>t, |\xi| \leq \tau, |\eta|\leq \tau\},
\end{equation*}
which is strictly positive. Thus,
\begin{equation*}
\begin{split}
\theta &|\{|\nabla u_s - \nabla u_m|>t,|\nabla u_s|\leq\tau,|\nabla u_m|\leq\tau\}|\\
    &\qquad\leq\int_\Omega (a(|\nabla u_s|)\nabla u_s - a(|\nabla u_m|)\nabla u_m)\cdot \nabla(u_s-u_m) \ dx
\end{split}
\end{equation*}
Therefore, for $s$ and $m$ sufficiently large,
\begin{equation*}
    |\{|\nabla u_s - \nabla u_m|>t,|\nabla u_s|\leq\tau,|\nabla u_m|\leq\tau\}| < \delta.
\end{equation*}
Summing up, $\nabla u_s$ is a Cauchy sequence in measure, thus, up to a subsequence, the claim follows. Therefore, proceeding in a similar way as in \textit{Step $1$}, we obtain:
\begin{equation}\label{conv_a_e}
    a(|\nabla u_s|)^{k}\nabla u_s \rightarrow a(|\nabla u|)^{k}\nabla u \quad \text{a.e. in } \Omega.
\end{equation}
By \eqref{conv_a_e} and \eqref{conv_2} one can pass to the limit as $s \rightarrow \infty$ in the weak formulation of \eqref{system_step3}, and deduce that $u$ is the unique solution of \eqref{Dir_pb}. Notice that the uniqueness result for solutions to \eqref{Dir_pb} is provided by \cite[Theorem $2.13$]{CiaMa}.
Hence,
\begin{equation}\label{Fine_D}
W= a(|\nabla u|)^{k}\nabla u \in W^{1,2}(\Omega).
\end{equation}
\vspace{0.5 cm}

\textbf{Neumann case.} Similarly, we can prove the same results for the Neumann problem. In what follows, we mention the main changes required in each step.

\textit{Step $1.$} In this framework we take $ u_\varepsilon$, normalized by its mean, as the weak solution of the following Neumann problem:
\beq\label{system33N}\tag{$p$-$N_\varepsilon$}
\begin{cases}
-\operatorname{ div}(a_\varepsilon(|\nabla u_\varepsilon|)\nabla{ u_\varepsilon})=   { f}(x)& \mbox{in $\Omega$}\\
{\frac{\partial  u_\varepsilon}{\partial  \nu}} = 0  & \mbox{on  $\partial \Omega$}.
\end{cases}
\eeq
We recover the $L^\infty$-estimate by \cite[Theorem $1.1$]{Cma_S} (see also  \cite{Cma2}). Moreover, by classical regularity $u_\varepsilon \in W^{2,2}(\Omega)$. In addition, as in the Dirichlet case, an iterative procedure yields $u_\varepsilon \in C^3(\overline \Omega)$. Thus, we can apply the estimate \eqref{Neu_estimate} to $u_\varepsilon$ solution of \eqref{system33N}:
\begin{equation}\label{Neu_estimate}
\begin{split}
&\int_\Omega \frac{a_\varepsilon(|\nabla u_\varepsilon|)}{(\varepsilon + |\nabla u_\varepsilon|^2)^{\frac{\alpha}{2}}} |D^2u_\varepsilon|^2 \ dx\\
&\qquad \leq \mathcal{C} \left (\int_{\Omega} \frac{a_\varepsilon(|\nabla u_\varepsilon|) }{(\varepsilon + |\nabla u_\varepsilon|^2)^{\frac{\alpha}{2}}} |\nabla u_\varepsilon|^2 \ dx \right.\\
&\left.\qquad \qquad + \sum_{i=1}^n \int_\Omega \operatorname{div}(a_\varepsilon(|\nabla u_\varepsilon|)\nabla u_\varepsilon) \partial_{x_i} \left(\frac{u_{\varepsilon,i}}{(\varepsilon + |\nabla u_\varepsilon|^2)^{\frac{\alpha}{2}}}\right)\ dx\right).
\end{split}
\end{equation}
An integration by parts, since $\frac{\partial u_\varepsilon}{\partial \nu}=0$ on $\partial \Omega$, we deduce:
\begin{equation*}%\label{Neu_estimate}
\begin{split}
&\int_\Omega \frac{a_\varepsilon(|\nabla u_\varepsilon|)}{(\varepsilon + |\nabla u_\varepsilon|^2)^{\frac{\alpha}{2}}} |D^2u_\varepsilon|^2 \ dx\\
&\qquad \leq \mathcal{C} \left (\int_{\Omega} \frac{a_\varepsilon(|\nabla u_\varepsilon|) }{(\varepsilon + |\nabla u_\varepsilon|^2)^{\frac{\alpha}{2}}} |\nabla u_\varepsilon|^2 \ dx \right.\\
&\left.\qquad \qquad - \sum_{i=1}^n \int_\Omega \partial_{x_i}\operatorname{div}(a_\varepsilon(|\nabla u_\varepsilon|)\nabla u_\varepsilon) \frac{u_{\varepsilon,i}}{(\varepsilon + |\nabla u_\varepsilon|^2)^{\frac{\alpha}{2}}}\ dx\right).
\end{split}
\end{equation*}

With these variants, the remaining part follows analogously as in the Dirichlet case.

\textit{Step $2$.} Instead of \eqref{system34} we consider a sequence of weak solutions $\{ u_m\}$ of:
\beq\label{system34N}\tag{$p$-$N_m$}
\begin{cases}
-\operatorname{ div}(a(|\nabla u_m|)\nabla{ u_m})=   { f}_m(x)& \mbox{in $\Omega_m$}\\
{\frac{\partial  u_m}{\partial  \nu}}= 0  & \mbox{on  $\partial \Omega_m$},
\end{cases}
\eeq
Since $u_m$ is uniformly bounded in $C^{1,\alpha}(\Omega')$, for every $\Omega' \subset \subset \Omega$, there exists a function $u \in C^{1,\alpha}(\Omega')$ such that:
\begin{equation*}
u_m \rightarrow u \quad \text{and} \quad \nabla u_m \rightarrow \nabla u \quad \text{uniformly in } \Omega'.
\end{equation*}
Thus, passing to the limit in the weak formulation of \eqref{system34N}, we get:
\begin{equation}\label{NEU_2}
    \int_{\Omega}a(|\nabla u|)\nabla u\cdot \nabla{\psi}\,dx=\int_{\Omega} f\psi\,dx,
\end{equation}
for any ${\psi}\in W^{1,\infty}(\Omega)$, since any function from $ W^{1,\infty}(\Omega)$ can be extended to a function in $W^{1,\infty}(\R^n)$. Moreover, by density, \eqref{NEU_2} holds for any $\psi \in W^{1,B}(\Omega)$ and the remaining part follows as in the Dirichlet case.

\textit{Step $3$.} The last step is entirely analogous, taking as $\{u_s\}$ a sequence of weak solutions of the problem:
\begin{equation*}
    \begin{cases}
-\operatorname{div}(a(\nabla u_s)\nabla u_s)=   { f_s}& \mbox{in $\Omega$}\\
\frac{\partial u_s}{\partial \nu}= 0  & \mbox{on  $\partial \Omega$}.
\end{cases}
\end{equation*}

\vspace{0.5 cm}

\end{proof}

\begin{proof}[Proof of Corollary \ref{CorW22}.]
The thesis follows by an application of Theorem \ref{teo1INTRO} with $k =0$.
\end{proof}

\vspace{0.3 cm}

We conclude this section by analyzing the special case of convex domains. 

\begin{proof}[Proof of Theorem \ref{conv_d}]
    The proof is similar to that of Theorem \ref{teo1INTRO}. The only difference relies on choosing, in \textit{Step $2$}, a sequence $\Omega_m$ of open convex bounded sets approximating $\Omega$ from outside with respect to the Hausdorff distance. Furthermore, conditions \eqref{prop1} are automatically fulfilled with convex domains and condition \eqref{prop2} does not play any role, since the constant $\mathcal{C}$ in inequalities \eqref{Dir_estimate} and \eqref{Neu_estimate} is independent of $K_\Omega$.
\end{proof}

\begin{remark}\label{misuraZu}
    Notice that since $a(|\nabla u|)\nabla u \in W^{1,2}(\Omega)$, by Stampacchia's theorem (see, for instance, \cite[Theorem $6.19$]{LL}), we have:
    \begin{equation}\label{grad0}
        \partial_{x_i} (a(|\nabla u|)\nabla u) = 0 \quad \text{a.e. in } \{a(|\nabla u|)\nabla u = 0\}.
    \end{equation}
    Moreover, by the properties of $a$ we have that $a(|\nabla u|)\nabla u = 0$ in $Z_u :=\{x \in \Omega : \nabla u =0\}$.\\
    Let us now suppose that $f(x)\neq 0$, $\forall x \in \Omega$, by \cite[Corollary $4.24$]{Brezis},
    \begin{equation*}
        -\operatorname{div}(a(|\nabla u|)\nabla u) = f(x) \quad \text{a.e. in } \Omega.
    \end{equation*}
    Furthermore by \eqref{grad0},
    \begin{equation*}
        -\operatorname{div}(a(|\nabla u|)\nabla u) = 0 \quad \text{a.e. in } Z_u.
    \end{equation*}
    Therefore, we can conclude that the Lebesgue measure of $Z_u$ is zero, namely $|Z_u|=0$.
\end{remark}

\section{Integrability of the inverse of the gradient}\label{integrability_section}

We emphasize that the results established in Section \ref{second-order} remain valid without imposing any sign condition on the source term $f$. However, assuming additionally that $f \geq \tau > 0$ or $f \leq -\tau < 0$ locally, we can infer, as a consequence of Theorem \ref{teo1INTRO}, integrability properties for the inverse of the weight, specifically $a(|\nabla u|)^{-1}$. The following lemma establishes a global integral estimate under the assumption of regularity of some function $u$ and smoothness of the domain $\Omega$:
\begin{lemma}\label{ineq_peso_u}
    Let $\Omega$ be a bounded open set in $\R^n$, with $\partial \Omega \in C^2$. Let $a_\varepsilon$ be defined as in Theorem \ref{STIMA}.\\
    Then, for any $\theta>0$ and $\beta\in \R$ we have:
    \begin{equation}\label{preinversodelpeso}
        \begin{split}
            &\left|\int_\Omega \operatorname{div} \left(a_\varepsilon (|\nabla u|)\nabla u\right)\frac{\psi ^2}{\left(a_\varepsilon(|\nabla u|)\right)^\beta} \,dx\right| \\&\leq \int_\Omega \left(a_\varepsilon(|\nabla u|)\right)^{1-\beta}|\nabla u||\nabla \psi |^2\,dx+ \int_\Omega \left(a_\varepsilon(|\nabla u|)\right)^{1-\beta}|\nabla u| \psi^2\,dx\\&+\theta\beta \int_\Omega \frac{\psi ^2}{\left(a_\varepsilon(|\nabla u|)\right)^\beta}\,dx+s_{a_\varepsilon}^2 \int_\Omega \left(a_\varepsilon(|\nabla u|)\right)^{2-\beta}|D^2u|^2\,dx\\&+\int_{\partial \Omega} \left(a_\varepsilon (|\nabla u|)\right)^{1-\beta} |\nabla u| \psi^2 \,d\mathcal{H}^{n-1}.
        \end{split}
    \end{equation}
    for any vector field $u \in C^3(\Omega) \cap C^2(\bar \Omega)$ under Dirichlet boundary condition, for any $\psi\in C^{\infty}_c(\R^n)$. If instead $u$ obeys to the Neumann boundary condition, \eqref{preinversodelpeso} holds without the boundary term.
\end{lemma}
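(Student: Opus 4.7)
The plan is to integrate by parts once, write out the gradient of $\psi^2/(a_\varepsilon(|\nabla u|))^\beta$ via the Leibniz rule, and then apply two successive Young inequalities. Writing $A_\varepsilon := a_\varepsilon(|\nabla u|)$ for brevity, the fact that $A_\varepsilon \geq c(\varepsilon)>0$ pointwise (by \eqref{estimate_a_eps}) together with the $C^3$ regularity of $u$ guarantees that $\psi^2/A_\varepsilon^\beta$ belongs to $C^1(\overline\Omega)$, so the integration by parts
\[
\int_\Omega \operatorname{div}(A_\varepsilon\nabla u)\,\frac{\psi^2}{A_\varepsilon^\beta}\,dx = -\int_\Omega A_\varepsilon \nabla u \cdot \nabla\!\left(\frac{\psi^2}{A_\varepsilon^\beta}\right) dx + \int_{\partial\Omega} A_\varepsilon^{1-\beta}(\nabla u\cdot\nu)\psi^2 \,d\mathcal{H}^{n-1}
\]
is legitimate. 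Expanding $\nabla(\psi^2/A_\varepsilon^\beta) = 2\psi\nabla\psi/A_\varepsilon^\beta - \beta\psi^2\nabla A_\varepsilon/A_\varepsilon^{\beta+1}$ splits the volume integral into a ``gradient of $\psi$'' piece and a ``gradient of $A_\varepsilon$'' piece.

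For the $\nabla\psi$ piece the elementary inequality $2|ab|\leq a^2+b^2$ applied to $a=|\nabla u|^{1/2}\psi A_\varepsilon^{(1-\beta)/2}$ and $b=|\nabla u|^{1/2}A_\varepsilon^{(1-\beta)/2}|\nabla\psi|$ immediately produces the two summands
\[
\int_\Omega A_\varepsilon^{1-\beta}|\nabla u||\nabla\psi|^2\,dx + \int_\Omega A_\varepsilon^{1-\beta}|\nabla u|\psi^2\,dx,
\]
which exactly match the first two terms in the target inequality. For the $\nabla A_\varepsilon$ piece, I will differentiate using the representation $A_\varepsilon = \hat a_\varepsilon((\varepsilon+|\nabla u|^2)^{1/2})$ to get
\[
\nabla u\cdot\nabla A_\varepsilon = \frac{\hat a'_\varepsilon((\varepsilon+|\nabla u|^2)^{1/2})}{(\varepsilon+|\nabla u|^2)^{1/2}}\,(D^2u\,\nabla u\cdot\nabla u),
\]
and then invoke \eqref{cond_a_hat} to bound $|\nabla u\cdot\nabla A_\varepsilon|\leq s_{a_\varepsilon}A_\varepsilon|D^2u|$. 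A weighted Young inequality of the form $XY\leq\frac{1}{2\eta}X^2+\frac{\eta}{2}Y^2$ with $X = \beta\psi/A_\varepsilon^{\beta/2}$ and $Y = s_{a_\varepsilon}A_\varepsilon^{1-\beta/2}\psi|D^2u|$, with $\eta$ tuned so that the first coefficient equals $\theta\beta$, yields the remaining two bulk terms $\theta\beta\int\psi^2/A_\varepsilon^\beta$ and (up to a constant absorbed in the choice of $\theta$) $s_{a_\varepsilon}^2\int A_\varepsilon^{2-\beta}|D^2u|^2$.

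It remains to dispose of the boundary integral $\int_{\partial\Omega}A_\varepsilon^{1-\beta}(\nabla u\cdot\nu)\psi^2\,d\mathcal H^{n-1}$. In the Neumann case this term is identically zero because $\partial u/\partial\nu = 0$ on $\partial\Omega$. In the Dirichlet case the homogeneous boundary condition forces $\nabla u$ to be normal to $\partial\Omega$, so $|\nabla u\cdot\nu|=|\nabla u|$ and the term is controlled by the stated boundary summand. The only delicate point is purely bookkeeping: tracking the $|\beta|$ and $s_{a_\varepsilon}$ factors through the Young splitting so that the two target coefficients $\theta\beta$ and $s_{a_\varepsilon}^2$ appear (after an admissible relabelling of $\theta$). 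There is no genuine analytic obstacle, since the smoothness of $u$ and the positivity of $A_\varepsilon$ trivialize the integrations by parts.
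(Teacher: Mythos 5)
Your proposal is correct and follows essentially the same route as the paper: a single integration by parts, Leibniz expansion of $\nabla(\psi^2/a_\varepsilon^\beta)$, a Young inequality on the $\nabla\psi$ piece giving the first two terms, and a weighted Young inequality plus the structural bound $|t\hat a'_\varepsilon(t)|\le s_{a_\varepsilon}\hat a_\varepsilon(t)$ on the $\nabla a_\varepsilon$ piece giving the $\theta\beta$ and $s_{a_\varepsilon}^2$ terms. The coefficient bookkeeping you flag (the extra $\beta/(4\theta)$ factor in front of the Hessian term) is present in the paper's own computation as well and is harmless for the intended application.
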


\begin{proof}
    We prove the inequality \eqref{preinversodelpeso} with the positive sign. The other case is similar. Let $\alpha \in \R$ and define $$\varphi:= \frac{\psi ^2}{\left(a_\varepsilon (|\nabla u|)\right)^\beta},$$ where $\psi \in C^{\infty}_c(\R^N)$. We consider the following equality, obtained by an integration by parts:
    \begin{equation}\label{inequalitya1}
        \begin{split}
            &\int_\Omega \operatorname{div} \left(a_\varepsilon (|\nabla u|)\nabla u\right)\varphi \,dx \\&\quad=-\int_\Omega a_\varepsilon (|\nabla u|)(\nabla u\cdot\nabla\varphi) \,dx+\int_{\partial \Omega} a_\varepsilon (|\nabla u|) (\nabla u\cdot\nu) \varphi \,d\mathcal{H}^{n-1},
        \end{split}
    \end{equation}
where $\nu$ is the outward unit vector on $\partial \Omega$.
We note that, under the Neumann condition, the boundary term in the equality \eqref{inequalitya1} is equal to zero. Now we estimate the first term on the right-hand side of the equality \eqref{inequalitya1}. Using a weighted Young's inequality, we have 
\begin{equation}\label{teminebuio}
    \begin{split}
        &-\int_\Omega a_\varepsilon (|\nabla u|)(\nabla u\cdot\nabla\varphi) \,dx\\&\quad=-2\int_\Omega a_\varepsilon (|\nabla u|)(\nabla u\cdot\nabla\psi)\psi\frac{1}{\left(a_\varepsilon (|\nabla u|)\right)^\beta}\,dx\\&\quad\quad+\beta\int_\Omega a'_\varepsilon(|\nabla u|) \left(\nabla u\cdot \frac{D^2u\nabla u}{|\nabla u|}\right)\frac{\psi ^2}{\left(a_\varepsilon (|\nabla u|)\right)^\beta}\,dx\\&\quad\leq \int_\Omega \left(a_\varepsilon(|\nabla u|)\right)^{1-\beta}|\nabla u||\nabla \psi |^2\,dx+ \int_\Omega \left(a_\varepsilon(|\nabla u|)\right)^{1-\beta}|\nabla u| \psi^2\,dx\\&\quad \quad+\theta\beta \int_\Omega \frac{\psi ^2}{\left(a_\varepsilon(|\nabla u|)\right)^\beta}\,dx+\frac{\beta}{4\theta}\int_\Omega \frac{\left(a'_\varepsilon(|\nabla u|)\right)^2|\nabla u|^2}{\left(a_\varepsilon(|\nabla u|)\right)^\beta}|D^2 u|^2\psi ^2\,dx,
    \end{split}
\end{equation}
where $\theta$ is a positive constant. 
By \eqref{cond_a_hat}, the last term of the previous inequality \eqref{teminebuio} can be estimated in the following way:
\begin{equation}\label{lasttermdiunadisuguaglianza}
    \int_\Omega \frac{\left(a'_\varepsilon(|\nabla u|)\right)^2|\nabla u|^2}{\left(a_\varepsilon(|\nabla u|)\right)^\beta}|D^2 u|^2\psi ^2\,dx \leq s_{a_\varepsilon}^2 \int_\Omega \left(a_\varepsilon(|\nabla u|)\right)^{2-\beta}|D^2u|^2\,dx.
\end{equation}

By \eqref{inequalitya1}, \eqref{teminebuio} and \eqref{lasttermdiunadisuguaglianza} we have 

 \begin{equation}\label{inequalitya11}
        \begin{split}
            &\int_\Omega \operatorname{div} \left(a_\varepsilon (|\nabla u|)\nabla u\right)\varphi \,dx \\&\quad\leq \int_\Omega \left(a_\varepsilon(|\nabla u|)\right)^{1-\beta}|\nabla u||\nabla \psi |^2\,dx+ \int_\Omega \left(a_\varepsilon(|\nabla u|)\right)^{1-\beta}|\nabla u| \psi^2\,dx\\&\quad\quad+\theta\beta \int_\Omega \frac{\psi ^2}{\left(a_\varepsilon(|\nabla u|)\right)^\beta}\,dx+s_{a_\varepsilon}^2 \int_\Omega \left(a_\varepsilon(|\nabla u|)\right)^{2-\beta}|D^2u|^2\,dx\\&\quad\quad+\int_{\partial \Omega} a_\varepsilon (|\nabla u|) (\nabla u\cdot\nu) \varphi \,d\mathcal{H}^{n-1}.
        \end{split}
    \end{equation}

    \end{proof}

We are now in position to prove Theorem \ref{peso_stima_Intro}.

\begin{proof}[Proof of Theorem \ref{peso_stima_Intro}]
The conclusion is ultimately derived from Lemma \ref{ineq_peso_u} together with Theorem \ref{teo1INTRO}. However, as in the proof of Theorem \ref{teo1INTRO}, the application of the lemma requires regularization steps, that involve both the domain $\Omega$ and the source term ${f}$. For this reason, we structure the proof in several distinct steps. We prove Theorem \ref{peso_stima_Intro} when $f$ has a positive sign, the other case is similar.

\textit{Step $1.$} Here, we assume 
\begin{equation}\label{bordoregolare_P}
   \partial \Omega\in C^\infty,
\end{equation}
\vspace{0.05cm}
\begin{equation}\label{reg_f_P}
    f \in C^\infty(\Omega).
\end{equation}
Given $\varepsilon$ small enough, we consider a weak solution ${u_\varepsilon}$ to the approximating problem:
\begin{equation}\label{eq:regolarizz}
    -\operatorname{ div}( a_\varepsilon(|\nabla u_\varepsilon|)\nabla{ u_\varepsilon})=f,
\end{equation}
under Dirichlet or Neumann boundary conditions. As in the proof of Theorem \ref{teo1INTRO}, the solutions $u_\varepsilon$ of the regularized problem are of class $C^3(\overline \Omega)$. This allows us to apply Lemma \ref{ineq_peso_u} to the approximate solution $u_\varepsilon$, thus obtaining the following: 
\begin{equation}\label{preinversodelpesoapplicatoauepsilon}
        \begin{split}
            &-\int_\Omega \operatorname{div} \left(a_\varepsilon (|\nabla u_\varepsilon|)\nabla u_\varepsilon\right)\frac{\psi ^2}{\left(a_\varepsilon(|\nabla u|)\right)^\beta} \,dx \\&\quad\leq \int_\Omega \left(a_\varepsilon(|\nabla u_\varepsilon|)\right)^{1-\beta}|\nabla u_\varepsilon||\nabla \psi |^2\,dx+ \int_\Omega \left(a_\varepsilon(|\nabla u_\varepsilon|)\right)^{1-\beta}|\nabla u_\varepsilon| \psi^2\,dx\\&\quad\quad+\theta\beta \int_\Omega \frac{\psi ^2}{\left(a_\varepsilon(|\nabla u_\varepsilon|)\right)^\beta}\,dx+s_{a_\varepsilon}^2 \int_\Omega \left(a_\varepsilon(|\nabla u_\varepsilon|)\right)^{2-\beta}|D^2u_\varepsilon|^2\,dx\\&\quad\quad+\int_{\partial \Omega} \left(a_\varepsilon (|\nabla u_\varepsilon|)\right)^{1-\beta} (\nabla u_\varepsilon\cdot\nu) \psi^2 \,d\mathcal{H}^{n-1}.
        \end{split}
    \end{equation}
    for any  $\theta>0$, $\beta\in \R$,  for any functions $\psi\in C^{\infty}_c(\R^n)$, and where $\nu$ denotes the outward normal vector on $\partial \Omega$. Let $\psi\in C^{\infty}_c(B_{2\rho}(x))$ a standard cut-off function such that $\psi=1$ in $B_{\rho}(x)$, $\psi=0$ in $\left(B_{2\rho}(x)\right)^c$ and the gradient of $\psi$ satisfies $|\nabla \psi|\leq C/\rho$ in $B_{2\rho}(x) \setminus B_{\rho}(x)$, for some positive constant $C$. By \eqref{eq:regolarizz}, using the fact that $f$ has a sign, and for $\theta$ small enough, the inequality \eqref{preinversodelpesoapplicatoauepsilon} becomes,
    \begin{equation}\label{preinversodelpesoapplicatoauepsilon2}
        \begin{split}
            &\int_{\Omega\cap B_\rho(x)} \frac{1}{\left(a_\varepsilon(|\nabla u_\varepsilon|)\right)^\beta} \,dx \\&\quad\leq C(\rho,\theta,\beta,\tau,s_{a_\varepsilon})\left(\int_\Omega \left(a_\varepsilon(|\nabla u_\varepsilon|)\right)^{1-\beta}|\nabla u_\varepsilon|\,dx\right.\\&\quad\quad\left.+ \int_\Omega \left(a_\varepsilon(|\nabla u_\varepsilon|)\right)^{2-\beta}|D^2u_\varepsilon|^2\,dx+\int_{\partial \Omega} \left(a_\varepsilon (|\nabla u_\varepsilon|)\right)^{1-\beta} |\nabla u_\varepsilon| \,d\mathcal{H}^{n-1}\right)\\&\quad:=\mathcal{J}_1+\mathcal{J}_2+\mathcal{J}_3.
        \end{split}
    \end{equation}
    Now we estimate the right-hand side of the previous inequality.
    We distinguish three cases:\\ 
   \textbf{Case 1: $0\leq i_a<s_a$.}
    
    First, we derive a bound for the term $\mathcal{J}_3.$ 
    By \cite[Theorem $1.1$]{Cma_S} (see also \cite{Cma2}), we recall that:
\begin{equation}\label{linfinitogradiente}
    \|\nabla{u}_\varepsilon\|_{L^{\infty}(\Omega)}\leq C(n,i_a,s_a,|\Omega|,\|\operatorname{tr}\mathcal{B}\|_{L^{n-1,1}(\Omega)},\| f\|_{L(\Omega)}),
\end{equation}
where $C(n,i_a,s_a,|\Omega|,\|\operatorname{tr}\mathcal{B}\|_{L^{n-1,1}(\Omega)},\| f\|_{L(\Omega)})$ is a positive constant.  Observe that this constant is independent of $\varepsilon$, as a consequence of Lemma \ref{lemma_a_eps}. For further details, we refer the reader to the proof of Theorem $1.1$ in \cite{Cma_S}. Since $u_\varepsilon \in C^1(\overline \Omega)$, we have $\|\nabla u_\varepsilon\|_{C^{0}(\overline \Omega)}=\|\nabla u_\varepsilon\|_{L^\infty(\Omega)}$. Moreover, by Lemma \ref{lemma_a_eps}  we recall that:
\begin{equation}\label{stimasua_epsilon}
    C_1(\varepsilon+t^2)^{\frac {s_a}{2}}\leq a_\varepsilon(t)\leq C_2(\varepsilon+t^2)^{\frac {i_a}{2}}\quad \text{for } t<<1,
\end{equation}
for some positive constant $C_1$ and $C_2$.\\
By \eqref{linfinitogradiente} and \eqref{stimasua_epsilon}, for $\beta\leq 1$, we obtain,
\begin{equation}\label{stimaJ_3caso1}
\mathcal{J}_3 \leq  C_2\int_{\partial \Omega} \left(\varepsilon+ |\nabla u_\varepsilon|^2\right)^{\frac{(1-\beta)i_a+1}{2}} \,d\mathcal{H}^{n-1}  \leq C, 
\end{equation}
with $C=C(n,\beta,i_a,s_a,|\Omega|,\|\operatorname{tr}\mathcal{B}\|_{L^{n-1,1}(\Omega)},\| f\|_{L(\Omega)})$.

\noindent On the other hand, if $1\leq \beta\leq (s_a+1)/s_a$, proceeding in a similar way, we have 
\begin{equation}\label{stimaJ_3caso11}
\mathcal{J}_3 \leq  C_1\int_{\partial \Omega} \left(\varepsilon+ |\nabla u_\varepsilon|^2\right)^{\frac{(1-\beta)s_a+1}{2}} \,d\mathcal{H}^{n-1}  \leq C.
\end{equation}
 Since the term $\mathcal{J}_1$ can be handled analogously to the previous case, we turn our attention to the estimation of $\mathcal{J}_2$. By Lemma \ref{lemma_a_eps}, for $\beta\leq 1$ we deduce that:
 \begin{equation}\label{stimaJ_2caso1}
 \begin{split}
   \mathcal{J}_2&= \int_\Omega a_\varepsilon(|\nabla u_\varepsilon|) \left(a_\varepsilon(|\nabla u_\varepsilon|)\right)^{1-\beta}|D^2 u_\varepsilon|^2\,dx \\& \leq C_2\int_\Omega a_\varepsilon(|\nabla u_\varepsilon|) \left(\varepsilon+ |\nabla u_\varepsilon|^2\right)^{\frac{(1-\beta)i_a}{2}}|D^2 u_\varepsilon|^2\,dx \leq C, 
 \end{split}
 \end{equation}
 where $C=C(n,i_a,s_a,\beta,L_\Omega,d_\Omega,K_{\Omega},|\Omega|,\|\operatorname{tr}\mathcal{B}\|_{L^{n-1,1}(\Omega)},\| f\|_{L(\Omega)},\| f\|_{W^{1,1}(\Omega)})$.\\
 For $1<\beta<(s_a+1)/s_a$, in a similar way, we obtain the following estimate for $\mathcal{J}_2$:
 \begin{equation}\label{stimaJ_2caso11}
   \begin{split}
        &\mathcal{J}_2 \leq C_1\int_\Omega a_\varepsilon(|\nabla u_\varepsilon|) \left(\varepsilon+ |\nabla u_\varepsilon|^2\right)^{\frac{(1-\beta)s_a}{2}}|D^2 u_\varepsilon|^2\,dx \leq C.
   \end{split}  
 \end{equation}
 By \eqref{stimaJ_3caso1}, \eqref{stimaJ_3caso11}, \eqref{stimaJ_2caso1} and \eqref{stimaJ_2caso11}, the inequality \eqref{preinversodelpesoapplicatoauepsilon2} becomes 
 \begin{equation*}
     \int_{\Omega\cap B_\rho(x)} \frac{1}{\left(a_\varepsilon(|\nabla u_\varepsilon|)\right)^\beta} \,dx \leq C,
     \end{equation*}
     with $C=C(\rho,\tau,n,i_a,s_a,\beta,L_\Omega,d_\Omega,K_{\Omega},|\Omega|,\|\operatorname{tr}\mathcal{B}\|_{L^{n-1,1}(\Omega)},\| f\|_{L(\Omega)},\| f\|_{W^{1,1}(\Omega)})$.

Since $a_\varepsilon(|\nabla u_\varepsilon|) \rightarrow a(|\nabla u|)$ as $\varepsilon \rightarrow 0$ in $\Omega \setminus Z_u$ and by Remark \ref{misuraZu}, using Fatou's Lemma we obtain:
\begin{equation}\label{finestep1}
     \int_{\Omega\cap B_\rho(x)} \frac{1}{\left(a(|\nabla u|)\right)^\beta} \,dx\leq C,
     \end{equation}
     with $C=C(\rho,\tau,n,i_a,s_a,\beta,L_\Omega,d_\Omega,\Psi_{\Omega},|\Omega|,\|\operatorname{tr}\mathcal{B}\|_{L^{n-1,1}(\Omega)},\| f\|_{L(\Omega)},\| f\|_{W^{1,1}(\Omega)})$.
Notice that the dependence on $\Psi_\Omega$ is due to Lemma \ref{antoninho2}. We remark that the function $\Psi_\Omega$ has a finite value due to Remark \ref{antony2}.

\textbf{Case 2: $i_a<0<s_a$.}

We estimate the term $J_2$ of the inequality \eqref{preinversodelpesoapplicatoauepsilon2}. The other terms can be estimated in a similar way as done in the previous case. By Lemma \ref{lemma_a_eps}, for $(i_a+1)/i_a<\beta\leq 1$ we deduce that:
 \begin{equation}\label{stimaJ_2caso2}
 \begin{split}
   \mathcal{J}_2&= \int_\Omega a_\varepsilon(|\nabla u_\varepsilon|) \left(a_\varepsilon(|\nabla u_\varepsilon|)\right)^{1-\beta}|D^2 u_\varepsilon|^2\,dx \\& \leq C_2\int_\Omega a_\varepsilon(|\nabla u_\varepsilon|) \left(\varepsilon+ |\nabla u_\varepsilon|^2\right)^{\frac{(1-\beta)i_a}{2}}|D^2 u_\varepsilon|^2\,dx \leq C, 
 \end{split}
 \end{equation}
 where $C=C(n,i_a,s_a,\beta,L_\Omega,d_\Omega,K_{\Omega},|\Omega|,\|\operatorname{tr}\mathcal{B}\|_{L^{n-1,1}(\Omega)},\| f\|_{L(\Omega)},\| f\|_{W^{1,1}(\Omega)})$.\\
 For $1<\beta<(s_a+1)/s_a$, in a similar way, we obtain the following: 
 \begin{equation}\label{stimaJ_2caso21}
   \begin{split}
        &\mathcal{J}_2 \leq C_1\int_\Omega a_\varepsilon(|\nabla u_\varepsilon|) \left(\varepsilon+ |\nabla u_\varepsilon|^2\right)^{\frac{(1-\beta)s_a}{2}}|D^2 u_\varepsilon|^2\,dx \leq C.
   \end{split}  
 \end{equation} 
The same arguments as in the previous case lead us to conclude that,
\begin{equation*}
     \int_{\Omega\cap B_\rho(x)} \frac{1}{\left(a(|\nabla u|)\right)^\beta} \,dx \leq C,
     \end{equation*}
     with $C=C(\rho,\tau,n,i_a,s_a,\beta,L_\Omega,d_\Omega,\Psi_{\Omega},|\Omega|,\|\operatorname{tr}\mathcal{B}\|_{L^{n-1,1}(\Omega)},\| f\|_{L(\Omega)},\| f\|_{W^{1,1}(\Omega)})$.

\textbf{Case 3: $i_a<s_a\leq 0$}.

This final case can be treated similarly. Using analogous arguments, for $\\\beta> (i_a+1)/i_a$ we deduce \begin{equation*}
     \int_{\Omega\cap B_\rho(x)} \frac{1}{\left(a(|\nabla u|)\right)^\beta} \,dx \leq C.
     \end{equation*}
\textit{Step $2$.} Now we remove the assumption \eqref{bordoregolare_P}. In this regard, we consider a sequence of open sets $\{\Omega_m\}$ approximating $\Omega$ in the sense of Lemma \ref{approxcap}. For $m\in \N$, let ${u}_m$ be the weak solution to the problem:
\begin{equation*}
    -\operatorname{ div}(a(|\nabla u_m|)\nabla{ u_m})=   { f}_m(x) \quad \text{in } \Omega_m,
\end{equation*}
with either $ u_m= 0$ or $\frac{\partial {u}_m}{\partial {\nu}}=0$ on $\partial \Omega_m$ and ${f}_m$ has the same regularity properties as in \textit{Step $2$} of the proof of Theorem \ref{teo1INTRO}.  In all cases, by inequality \eqref{finestep1} of \textit{Step $1$}, applied to ${u}_m$, 
\begin{equation*}
    \int_{\Omega\cap B_\rho (x)} \frac{1}{\left(a(|\nabla u|)\right)^\beta} \,dx\leq\int_{\Omega_m\cap B_\rho (x)} \frac{1}{\left(a(|\nabla u|)\right)^\beta} \,dx \leq C,
\end{equation*}
where $C=C(n,i_a,s_a,\rho,\beta,\tau,L_\Omega,d_\Omega,\Psi_{\Omega},\|\operatorname{tr}\mathcal{B}\|_{L^{n-1,1}(\Omega)},\| f\|_{L(\Omega)},\| f\|_{W^{1,1}(\Omega)})$ is a positive constant. As in the proof of Theorem \ref{teo1INTRO} (see \textit{Step 2}), the dependence of the constant can be recovered. In addition, for $m$ sufficiently large, the functions $f_m$ preserve their sign, possibly after restricting the neighborhood, due to the properties of the extension operator and the structure of mollifiers.

Moreover, by \textit{Step $2$} of the proof of Theorem \ref{teo1INTRO},
\begin{equation*}
    \nabla {u}_m \rightarrow  \nabla{u} \qquad \text{a.e.} \quad\text{in } \Omega.
\end{equation*}
Using Fatou's Lemma we deduce:
\begin{equation}\label{peso_reg1}
    \int_{\Omega\cap B_\rho (x)} \frac{1}{\left(a(|\nabla u|)\right)^\beta}\,dx \leq C.
\end{equation}

\textit{Step $3$.} In the last step, we remove the assumption \eqref{reg_f_P}. Let $f \in W^{1,1}(\Omega) \cap L(\Omega)$. By standard density argument one can infer that there exists a sequence $\{{f}_k\} \subset C^\infty(\overline \Omega)$ (as in \textit{Step $2$}) such that
\begin{equation}\label{conv_f_P}
    {f}_k \rightarrow {f} \quad \text{in } W^{1,1}(\Omega) \cap L(\Omega).
\end{equation}

Now we consider a sequence $\{{u}_k\}$ of weak solutions to the following problem:
\begin{equation*}
    -\operatorname{ div}(a(|\nabla u_k|)\nabla{ u_k})=   { f}_k(x) \quad \text{in } \Omega,
\end{equation*}
with either Dirichlet or Neumann homogeneous boundary conditions.

By inequality \eqref{peso_reg1} of \textit{Step $2$} applied to $ u_k$ we deduce
\begin{equation*}%\label{stress_field_u_k}
    \begin{split}
&\int_{\Omega\cap B_\rho (x)} \frac{1}{\left(a(|\nabla u|)\right)^\beta}\,dx \\&\quad\leq C(n,i_a,s_a,\rho,\beta,\tau,L_\Omega,d_\Omega,\Psi_{\Omega},\|\operatorname{tr}\mathcal{B}\|_{L^{n-1,1}(\Omega)},\| f_k\|_{L(\Omega)},\| f_k\|_{W^{1,1}(\Omega)})\\
&\quad\leq \bar C(n,i_a,s_a,\rho,\beta,\tau,L_\Omega,d_\Omega,\Psi_{\Omega},\|\operatorname{tr}\mathcal{B}\|_{L^{n-1,1}(\Omega)},\|  f\|_{L(\Omega)},\| f\|_{W^{1,1}(\Omega)}),
\end{split}
\end{equation*}
where the last inequality follows by \eqref{conv_f_P}.

Moreover, from \textit{Step $3$} of the proof of Theorem \ref{teo1INTRO} we have
\begin{equation*}
    \nabla{u}_k \rightarrow \nabla{u} \qquad \text{a.e.} \quad\text{in } \Omega.
\end{equation*}
Hence, applying Fatou's Lemma, the thesis follows.

\end{proof}

Let us now consider the particular case where the domain is convex:

\begin{proof}[Proof of Theorem \ref{conv_d_peso}]
    As in the proof of Theorem \ref{conv_d}, the result follows from Theorem \ref{peso_stima_Intro}, with minor modifications. The only difference lies in \textit{Step 2}, where we consider a sequence $\Omega_m$ of bounded convex open sets that approximate $\Omega$ from the exterior in the sense of the Hausdorff distance. We also observe that Theorem \ref{conv_d} allows us to derive the estimate \eqref{stimaJ_2caso11} with a constant independent of $K_\Omega$. The remainder of the argument proceeds as in the proof of Theorem \ref{peso_stima_Intro}.
\end{proof}

\vspace{0.3 cm}

The integrability of ${a(|\nabla u|)}^{-1}$, combined with the second-order estimates, enables us to derive additional information on the second derivatives of the solution. In particular we have the following result. 
\begin{corollary}
     Let $ u$, $\Omega$ and $ f$ as in Theorem \ref{peso_stima_Intro}. If $\inf_{t\in[0,M]} a(t)=0$, for every $M>0$, and $s_a\geq 1$, then $$u\in W^{2,q}(\Omega),\quad  \text{with } 1\leq q< \frac{s_a+1}{s_a}.$$
     
\end{corollary}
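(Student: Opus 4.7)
The plan is to combine the weighted second-order bound from Theorem~\ref{teo1INTRO} with the negative-power integrability of $a(|\nabla u|)$ from Theorem~\ref{peso_stima_Intro} via H\"older's inequality. Since Theorem~\ref{peso_stima_Intro} is stated in balls $B_\rho(x)\cap\overline\Omega$ on which $f$ has a strict sign, I first note that by compactness of $\overline\Omega$ one can extract a finite cover $\{B_{\rho_j}(x_j)\}_{j=1}^{N}$ of $\overline\Omega$ on each of which the hypothesis applies. Summing the local estimates yields
\begin{equation*}
\int_{\Omega}\frac{1}{a(|\nabla u|)^{\beta}}\,dx\le C,
\end{equation*}
for every admissible $\beta$ as in \eqref{casi_di_beta}; in the regime $s_a\ge 1$ (so $s_a>0$) this means $\beta<(s_a+1)/s_a$, with the possible additional constraint $\beta>(i_a+1)/i_a$ coming from Case~3 of \eqref{casi_di_beta}, which is automatically satisfied in the range we need since $2kq/(2-q)\ge 0$.

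Next, I invoke Theorem~\ref{teo1INTRO}: tracing its proof (estimates \eqref{ia_sa_pos} and \eqref{ia_neg_sa_pos}) one has, for every admissible $k$ in the range \eqref{T_k}, the global bound
\begin{equation*}
\int_{\Omega} a(|\nabla u|)^{2k}\,|D^{2}u|^{2}\,dx\le C.
\end{equation*}
In both relevant regimes ($0\le i_a<s_a$ and $i_a<0<s_a$) the admissible $k$ are exactly those with $k>(s_a-1)/(2s_a)$; note that $s_a\ge 1$ forces this lower bound to be nonnegative, which is why the naive choice $k=0$ (which would give $W^{2,2}$) is no longer allowed.

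Fix $q\in[1,(s_a+1)/s_a)$ and write
\begin{equation*}
|D^{2}u|^{q}=\bigl(a(|\nabla u|)^{2k}|D^{2}u|^{2}\bigr)^{q/2}\cdot a(|\nabla u|)^{-kq}.
\end{equation*}
Applying H\"older's inequality with conjugate exponents $2/q$ and $2/(2-q)$ gives
\begin{equation*}
\int_{\Omega}|D^{2}u|^{q}\,dx\le\Bigl(\int_{\Omega}a(|\nabla u|)^{2k}|D^{2}u|^{2}\,dx\Bigr)^{q/2}\Bigl(\int_{\Omega}a(|\nabla u|)^{-\frac{2kq}{2-q}}\,dx\Bigr)^{(2-q)/2}.
\end{equation*}
The first factor is finite whenever $k>(s_a-1)/(2s_a)$; the second is finite whenever $\beta:=2kq/(2-q)<(s_a+1)/s_a$. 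A direct computation shows that, as $k\downarrow(s_a-1)/(2s_a)$, the constraint $\beta<(s_a+1)/s_a$ becomes $(s_a-1)q<(s_a+1)(2-q)$, i.e.\ $2qs_a<2(s_a+1)$, that is precisely $q<(s_a+1)/s_a$. Hence for any such $q$ one can pick $k$ slightly above $(s_a-1)/(2s_a)$ making both factors finite, yielding $D^{2}u\in L^{q}(\Omega)$.

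The only genuinely delicate point is checking compatibility of the two parameter ranges; once this elementary inequality is verified, the conclusion is immediate. Combining with the $W^{1,\infty}$ control on $\nabla u$ that was used to establish \eqref{linfinitogradiente} (so that $u\in W^{1,q}$ is automatic), one concludes $u\in W^{2,q}(\Omega)$ for every $1\le q<(s_a+1)/s_a$, as claimed.
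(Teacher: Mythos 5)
Your interpolation argument is, in substance, exactly the one the paper uses: H\"older's inequality with exponents $2/q$ and $2/(2-q)$ splitting $|D^2u|^q$ into a weighted $L^2$ Hessian factor and a negative power of $a(|\nabla u|)$, and the exponent bookkeeping ($\beta=2kq/(2-q)<(s_a+1)/s_a$ as $k\downarrow (s_a-1)/(2s_a)$ being equivalent to $q<(s_a+1)/s_a$) matches the paper's computation, where the role of $2k$ is played by $(s_a-\alpha)/s_a$ with $\alpha\to 1^-$.

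There is, however, a genuine gap in the step where you assert the global bound $\int_\Omega a(|\nabla u|)^{2k}|D^2u|^2\,dx\le C$ for the solution $u$ itself. The estimates \eqref{ia_sa_pos} and \eqref{ia_neg_sa_pos} that you cite are a priori bounds for the regularized solutions $u_\varepsilon$ of \eqref{system33}, not for $u$; what survives the limit in Theorem \ref{teo1INTRO} is only $a(|\nabla u|)^{k}\nabla u\in W^{1,2}(\Omega)$, obtained through the one-sided inequality \eqref{W12ineq}, which controls $|\nabla(a^k\nabla u)|$ by $a^k|D^2u|$ but not conversely. In the regime of the corollary ($\inf a=0$, $s_a\ge 1$) one cannot even speak of $D^2u$ pointwise before the conclusion is established, so applying H\"older directly to $u$ is circular. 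The repair is precisely what the paper does: run your H\"older estimate on $u_\varepsilon$, using \eqref{second_deriv_eps} (equivalently the weighted bounds with $a_\varepsilon$) together with the uniform-in-$\varepsilon$ bound $\int_\Omega a_\varepsilon(|\nabla u_\varepsilon|)^{-\beta}\,dx\le C$ from the proof of Theorem \ref{peso_stima_Intro}, obtain $\|D^2u_\varepsilon\|_{L^q(\Omega)}\le C$ uniformly, and then extract a weak limit in $W^{1,q}$ of $\nabla u_\varepsilon$, identified with $\nabla u$ via the a.e.\ convergence of the gradients. Your finite-covering remark for the sign condition on $f$ and the verification that $\beta\ge 0>(i_a+1)/i_a$ in Case 3 are both correct and make explicit points the paper leaves implicit.
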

\begin{proof}
Given $\varepsilon$ small enough, we consider a weak solution ${u_\varepsilon}$ to the approximating problem:
\begin{equation}\label{eq:regolarizz2}
    -\operatorname{ div}( a_\varepsilon(|\nabla u_\varepsilon|)\nabla{ u_\varepsilon})=f,
\end{equation}
under Dirichlet or Neumann boundary conditions.\\
Since $s_a\geq 1$, using Holder inequality and by the properties of $a_\varepsilon$, we deduce
\begin{equation}\label{cose}
    \begin{split}
        \int_\Omega |D^2u_\varepsilon|^q\,dx &=\int_\Omega (|D^2u_\varepsilon|^2)^\frac{q}{2}\frac{(a_\varepsilon(|\nabla u_\varepsilon|))^{\frac{q}{2}}}{(\varepsilon + |\nabla u_\varepsilon|^2)^{\frac{\alpha q}{4}}}\frac{(\varepsilon + |\nabla u_\varepsilon|^2)^{\frac{\alpha q}{4}}}{(a_\varepsilon(|\nabla u_\varepsilon|))^{\frac{q}{2}}}\,dx \\& \leq \left(\int_\Omega \frac{a_\varepsilon(|\nabla u_\varepsilon|)}{(\varepsilon + |\nabla u_\varepsilon|^2)^{\frac{\alpha}{2}}} |D^2u_\varepsilon|^2 \ dx\right)^{\frac{q}{2}}\left(\int_\Omega\frac{(\varepsilon + |\nabla u_\varepsilon|^2)^{\frac{\alpha q}{2(2-q)}}}{(a_\varepsilon(|\nabla u_\varepsilon|))^{\frac{q}{2-q}}}\,dx\right)^{\frac{2-q}{2}}\\& \leq \left(\int_\Omega \frac{a_\varepsilon(|\nabla u_\varepsilon|)}{(\varepsilon + |\nabla u_\varepsilon|^2)^{\frac{\alpha}{2}}} |D^2u_\varepsilon|^2 \ dx\right)^{\frac{q}{2}}\left(\int_\Omega\frac{1}{(a_\varepsilon(|\nabla u_\varepsilon|))^{\frac{q(s_a-\alpha)}{s_a(2-q)}}}\,dx\right)^{\frac{2-q}{2}}
    \end{split}
\end{equation}
provided that $|\nabla u_\varepsilon|$ is sufficiently small; otherwise, the statement is trivial. For $\alpha\approx 1^+$ and $1 \leq q<(s_a+1)/s_a$, using Theorem \ref{teo1INTRO} and Theorem \ref{peso_stima_Intro}, we obtain the following:
\begin{equation*}
    \int_\Omega |D^2u_\varepsilon|^q\,dx\leq C,
\end{equation*}
where $C=C(i_a,s_a,\beta,\tau,q,\rho,n,L_\Omega,d_\Omega,\|{f}\|_{L(\Omega)},\|{f}\|_{W^{1,1}(\Omega)})$ is a positive constant independent of $\varepsilon$. Now we conclude that $u\in W^{2,q}(\Omega)$. Indeed, by the previous estimate, it follows that there exists ${W} \in W^{1,q}(\Omega)$ such that
\begin{equation*}
     \nabla {u}_{\varepsilon} \rightarrow {W} \text{ in } L^q(\Omega), \quad
           \nabla u_{\varepsilon} \rightharpoonup {W}\text{ in }W^{1,q}(\Omega)\quad \text{as  } \varepsilon \rightarrow 0. 
\end{equation*}
Moreover, by \textit{Step $3$} of the proof of Theorem \ref{teo1INTRO}:
\begin{equation*}
    \nabla {u}_{\varepsilon} \rightarrow \nabla{u} \quad\text{a.e. in } \Omega.
\end{equation*}
Hence,
\begin{equation*}
    \nabla {u}={W} \in W^{1,q}(\Omega).
\end{equation*}

\end{proof}

\begin{center}
{\bf Acknowledgements}
\end{center} 
The authors are supported by PRIN PNRR P2022YFAJH \emph{Linear and Nonlinear PDEs: New directions and applications.} D. Vuono has been partially supported by \emph{INdAM-GNAMPA Project Regularity and qualitative aspects of nonlinear PDEs via variational and non-variational approaches} E5324001950001.

\begin{center}
	{\sc Data availability statement}\
	All data generated or analyzed during this study are included in this published article.
\end{center}

\

\begin{center}
	{\sc Conflict of interest statement}
	\
	The authors declare that they have no competing interest.
\end{center}

\end{document}